\title{Optimization of an eigenvalue arising in optimal insulation with a lower bound}
\author[*]{S\"oren Bartels}
\author[**]{Giuseppe Buttazzo}
\author[*]{Hedwig Keller}
\affil[*]{Department for Applied Mathematics, University of Freiburg (Germany)}
\affil[**]{Department of Mathematics, University of Pisa (Italy)}
\begin{document}
\maketitle

\section*{Abstract}

An eigenvalue problem arising in optimal insulation  related to the minimization of the heat decay rate of an insulated body is adapted to enforce a positive lower bound imposed on the distribution of insulating material. We prove the existence of optimal domains among a class of convex shapes and propose a numerical scheme to approximate the eigenvalue. The stability of the shape optimization among convex, bounded domains in $\mathbb{R}^3$ is proven for an approximation with polyhedral domains under a non-conformal convexity constraint. We prove that on the ball, symmetry breaking of the optimal insulation can be expected in general. To observe how the lower bound affects the breaking of symmetry in the optimal insulation and the shape optimization, the eigenvalue and optimal domains are approximated for several values of mass $m$ and lower bounds $\ell_{\min}\ge0$. The numerical experiments suggest, that in general symmetry breaking still arises, unless $m$ is close to a critical value $m_0$, and $\ell_{\min}$ large enough such that almost all of the mass $m$ is fixed through the lower bound. For $\ell_{\min}=0$, the numerical results are consistent with previous numerical experiments on shape optimization restricted to rotationally symmetric, convex domains.\\

{\it 2020 Mathematics Subject Classification:} Primary 49R05, 49Q10, 65N12 secondary: 35J25, 65N25, 52-08.\\
{\it Keywords:} Optimal insulation, symmetry breaking, numerical scheme, shape optimization.\\

\textit{Acknowledgments:} This work is supported by DFG grants BA2268/4-2 within the Priority Program SPP 1962 (Non-smooth and Complementarity-based Distributed Parameter Systems: Simulation and Hierarchical Optimization). The work of GB is part of the project 2017TEXA3H {\it``Gradient flows, Optimal Transport and Metric Measure Structures''} funded by the Italian Ministry of Research and University.

\section{Introduction}

In previous research \cite{BBN17,keller,BB19} an eigenvalue in optimal insulation was considered, which translates to minimizing the heat decay rate of a heat conducting body $\Omega \subset \mathbb{R}^d$ surrounded by an insulating material $\ell:\partial\Omega\to\mathbb{R}_{+}$ of mass $m$, so that $\int_{\partial\Omega}\ell\,\mathrm{d}s=m$.
The eigenvalue
\begin{equation} \label{intro:eigval_ell}
\lambda_\ell(\Omega) = \inf_{u \in H^1(\Omega)} \left\lbrace\int_\Omega \vert \nabla u \vert ^2\, \mathrm{d}x + \int_{\partial \Omega} \frac{u^2}{\ell}\, \mathrm{d}s \ : \ \int_\Omega u^2\, \mathrm{d}x = 1 \right\rbrace
\end{equation}
results from a model reduction for the thickness of the insulating layer (see \cite{acerbi1986reinforcement}). For the eigenvalue a breaking of symmetry was observed, if there was not enough insulating material, both for the optimal distribution of insulating material as well as the optimal domains in a shape optimization problem under a convexity constraint \cite{BBN17,keller,BB19}.

A numerical scheme for this eigenvalue was proposed in \cite{BB19} and the shape optimization among rotationally symmetric convex domains was considered in \cite{keller}. The numerical experiments confirm the breaking of symmetry both for the domain $B_1(0)$ and the approximated optimal domains of the shape optimization problem.

This breaking of symmetry was further investigated in \cite{huang,huang2022stability} and it was shown that for domains which are bounded with a $C^1$ regular boundary or convex, concentration breaking occurs \cite[Theorem 1.7]{huang}, such that a part of the boundary is left uncovered, if $m$ is too small. Analytically not much is known about the shape of optimal domains, except that for low values of $m$ the ball cannot be optimal \cite{BBN17}. Numerical experiments for the shape optimization can be found in \cite{BB19,keller}. In \cite{BB19} a two-dimensional shape optimization is conducted as well as a three-dimensional shape optimization among assembled half ellipsoids. The numerical approximation of the eigenvalue from \cite{BB19} confirms the expected asymmetry. In \cite{keller} rotationally symmetry is assumed, and the shape optimization reduced to a two-dimensional setting. This allows to give more conclusive results to the optimal domains concerning the eigenvalue in optimal insulation. For this optimization problem, even with the assumption of rotational symmetry, the approximated eigenfunctions and optimal domains were consistent with the expectations from prior analytical results, in particular the breaking of symmetry \cite{BBN17}. However, the restriction to rotationally symmetric domains is only justified by simple numerical experiments \cite{BB19}, and even for a rotationally symmetric domain we cannot infer symmetry of the corresponding eigenfunction.

Due to the symmetry breaking, a part of the boundary is left uncovered for small values $m$ of insulating material \cite{BBN17, huang}. Since the insulating material might have additional properties, it is of interest to avoid a concentration breaking and to ensure that the whole boundary is covered by the given material. To guarantee this, we place a lower bound on the distribution of insulating material and restrict the set of admissible distributions to
\begin{equation*}
\mathcal{H}_{\widehat{m},\ell_{\min}}(\partial \Omega):=\left\lbrace\widehat{\ell}:\partial\Omega\to\mathbb{R}\ :\ \widehat{\ell}\ge\ell_{\min},\ \int_{\partial \Omega} \widehat{\ell}\,\mathrm{d} s = \widehat{m}\right\rbrace 
\end{equation*}
for a positive lower bound $\ell_{\min}>0$. 
This corresponds to an adaptation of the results of \cite{BBN17,acerbi1986reinforcement}, but with Robin boundary conditions in the model reduction. This was previously considered for an energy problem in \cite{della2021optimization} and the results can be used to adapt the eigenvalue problem in optimal insulation to an eigenvalue problem with a lower bound placed on the insulating material. For large enough values of mass $\widehat{m}$ for which the optimal distribution is expected to be constant, such a lower bound has no effect. For low values of mass $\widehat{m}$ however, it is not clear, how a lower bound on the optimal insulation affects the breaking of symmetry both in the optimal insulation as well as the shape optimization. We prove that on the ball, symmetry breaking arises in the optimal insulation for $\ell_{\min}>0$ and $\widehat{m} <m_0$ unless all the mass is fixed through $ \ell_{\min}$.

The numerical experiments in Section \ref{sec:num_exp} suggest, that in the shape optimization in general symmetry breaking still arises, unless $\widehat{m}$ is close to a critical value $m_0$, and $\ell_{\min}$ is large enough such that almost all of the mass $\widehat{m}$ is fixed through the lower bound, but that the optimal domains for fixed $\widehat{m}$ transform to the ball as $\ell_{\min}$ increases, and kinks in the boundary of the domains smoothen.

The work is structured as followed. We first summarize the results for the eigenvalue arising in optimal insulation as introduced in \cite{BBN17} and the results of \cite{della2021optimization}, from which we can derive an eigenvalue problem in optimal insulation with a positive lower bound imposed on the distribution of insulating material. For the eigenvalue on the ball, we show that symmetry breaking occurs even for positive $\ell_{\min}$.
We prove the existence of an optimal domain among a class of convex bounded domains with fixed volume in $\mathbb{R}^d, d = 2,3$, and show that the problem is not well-posed in the absence of a convexity constraint.
A numerical scheme for the approximation of the eigenvalue is proposed and the stability of the shape optimization is proven in the framework of \cite{keller2_preprint} for an approximation with a non-conformal polyhedral convexity in $\mathbb{R}^3$, under suitable assumptions on the admissible discrete domains. Lastly, we report numerical experiments and approximated domains for both eigenvalues.

\section{An eigenvalue problem arising in optimal insulation}\label{sec:oi}

Assume that a heat conducting body $\Omega$ is surrounded by a thin layer of insulating material, arranged in such a way that the heat is retained as well as possible. The optimization criterion is the minimization of the decay rate of the temperature, and $m$ is the total amount of the insulating material. The goal is to find the optimal thickness $\ell(\sigma)$ of the insulating layer, for every $\sigma\in\partial\Omega$. As shown in \cite{BBN17} this corresponds to the minimization of the eigenvalue
\begin{equation} \label{eq:rayleigh_oi}
\lambda_{m}(\Omega) = \inf_{u \in H^1(\Omega)}\left\lbrace \int_{\Omega} \vert \nabla u \vert^2 \, \mathrm{d}x + \frac{1}{m} \|u \|_{L^1(\partial \Omega)}^2: \,\Vert u \Vert_{L^2(\Omega)} = 1 \right\rbrace.
\end{equation}

The boundary term corresponds to Robin-type boundary conditions which result from the model reduction for the thickness of the insulating layer $\ell:\partial\Omega\to\mathbb{R}_{+}$, a nonnegative measurable function with total mass $m$. The optimal distribution of insulating material can be reconstructed from a minimal $u$ as shown below in \eqref{eq:oi_optimaldis}.

Let $\Omega_\varepsilon=\Omega\cup\Sigma_\varepsilon$, with
\begin{equation}\label{sigmaeps}
\Sigma_\varepsilon=\{x\ :\ x=\sigma+\varepsilon s\ell(\sigma)\nu(\sigma),\ \sigma\in\partial\Omega,\ s\in(0,1)\},
\end{equation}
with $\varepsilon > 0$, $\nu(\sigma)$ the outer unit normal such that $\Sigma_\varepsilon$ corresponds to the region occupied by insulating material in which the conductivity $\varepsilon$ is significantly smaller than in the domain $\Omega$. \
The energy problem with a given heat source $f$ is described by the functional
\begin{equation}\label{G_eps}
G_\varepsilon (u)= \frac{1}{2} \int_\Omega \vert \nabla u \vert^2 \, \mathrm{d}x + \frac{\varepsilon}{2} \int_{\Sigma_\varepsilon}\vert \nabla u \vert^2 \, \mathrm{d}x - \int_{\Omega} fu \, \mathrm{d}x,
\end{equation}
defined on the Sobolev space $H^1_0(\Omega_\varepsilon)$, which corresponds to the boundary value problem
\begin{equation}\label{G_eps_pde}
\begin{cases}
-\Delta u_\varepsilon=f\quad\text{in }\Omega,\\
-\Delta u_\varepsilon=0\quad\text{in }\Sigma_\varepsilon,\\
u_\varepsilon=0\quad\text{on }\partial\Omega_\varepsilon,\\
\displaystyle\frac{\partial u_\varepsilon^-}{\partial\nu}=\varepsilon\frac{\partial u_\varepsilon^+}{\partial \nu}\quad\text{on }\partial \Omega.
\end{cases}
\end{equation}

It has been shown e.g. in \cite{acerbi1986reinforcement} that the sequence of functionals $G_\varepsilon$ $\Gamma$-converges in the strong $L^2$-topology to
\begin{equation*}
G(u,\ell) = \frac{1}{2}\int_{\Omega} \vert \nabla u \vert^2 \, \mathrm{d}x -\int_{\Omega} fu\, \mathrm{d}x + \frac{1}{2} \int_{\partial \Omega} \frac{u^2}{\ell}\,\mathrm{d}s
\end{equation*}
as $\varepsilon\to0$. Rather than considering the energy problem, we optimize the heat decay rate which is determined by the following eigenvalue problem
\begin{equation} \label{rein:eigval_ell}
\lambda_\ell(\Omega) = \inf_{u \in H^1(\Omega)} \left\lbrace\int_\Omega \vert \nabla u \vert ^2\, \mathrm{d}x + \int_{\partial \Omega} \frac{u^2}{\ell}\, \mathrm{d}s \ : \ \int_\Omega u^2\, \mathrm{d}x = 1 \right\rbrace
\end{equation} 
for an admissible distribution of insulating material
\begin{equation*}
\ell\in\mathcal{H}_m(\partial\Omega):=\left\lbrace\ell\in L^1(\partial\Omega)\ :\ \ell\ge0\text{ and } \int_{\partial\Omega}\ell\,\mathrm{d}s=m\right\rbrace
\end{equation*}
of a given mass $m$. A simple computation yields that, if $u$ is the solution of \eqref{eq:rayleigh_oi}, then the optimal thickness $\ell_{\text{opt}}\in\mathcal{H}_m(\partial \Omega)$ is given by
\begin{equation}\label{eq:oi_optimaldis}
\ell_{\text{opt}}(z) = \frac{m\,|u(z)|}{\int_{\partial\Omega}|u|\,\mathrm{d}s}.
\end{equation}

In \cite{BBN17} a surprising breaking of symmetry for the eigenvalue is observed. 

\begin{theorem}[{\cite[Theorem 3.1]{BBN17}}] \label{thm:sym_breaking}
Let $\Omega$ be a ball. Then there exists $m_0>0$ such that solutions to \eqref{eq:rayleigh_oi} are radial if $m > m_0$, while solutions are not radial for $0 < m <m_0$. As a consequence, the optimal density $\ell_{\mathrm{opt}}$ is not constant if $m <m_0$.
\end{theorem}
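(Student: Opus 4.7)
The plan is to identify $m_0$ as the threshold where the radial Rayleigh value meets the first non-trivial Neumann eigenvalue of the ball, and then to exploit this threshold via a second variation for small $m$ and via a spherical harmonic decomposition for large $m$. Observe first that any positive radial $\phi$ is constant on $\partial B$, so $\bigl(\int_{\partial B}|\phi|\bigr)^{2}=|\partial B|\int_{\partial B}\phi^{2}$ by equality in Cauchy--Schwarz. Consequently the restriction of \eqref{eq:rayleigh_oi} to radial competitors coincides with the Robin functional of parameter $\beta=|\partial B|/m$, and the radial infimum $\lambda_{m}^{\mathrm{rad}}(B)$ equals $\lambda_{1}^{\mathrm{Rob}}(B;\beta)$; this quantity is continuous and strictly decreasing in $m$, with $\lim_{m\to 0^{+}}\lambda_{m}^{\mathrm{rad}}=\lambda_{1}^{\mathrm{Dir}}(B)$ and $\lim_{m\to\infty}\lambda_{m}^{\mathrm{rad}}=0$. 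I define $m_{0}\in(0,\infty)$ as the unique mass with $\lambda_{m_{0}}^{\mathrm{rad}}(B)=\mu_{2}(B)$, where $\mu_{2}(B)>0$ is the first non-trivial Neumann eigenvalue of $B$ (which lies in $(0,\lambda_{1}^{\mathrm{Dir}}(B))$ by Friedlander's inequality).

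For $0<m<m_{0}$, let $\phi>0$ denote the normalized radial Robin eigenfunction and take $\psi$ a first non-trivial Neumann eigenfunction of the form $\psi=f(r)Y_{1}(\theta)$. Then $\int_{B}\phi\psi=0$ by orthogonality of distinct spherical harmonics and $I_{\psi}:=\int_{\partial B}\psi=0$ by angular symmetry. Since $\phi$ is positive and constant on $\partial B$, for small $\varepsilon$ the trace of $\phi+\varepsilon\psi$ stays positive, so $\|\phi+\varepsilon\psi\|_{L^{1}(\partial B)}^{2}=(I_{\phi}+\varepsilon I_{\psi})^{2}$ is smooth in $\varepsilon$. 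A direct computation using $-\Delta\phi=\lambda_{m}^{\mathrm{rad}}\phi$ together with the Robin boundary condition cancels the first-order term, leaving
\begin{equation*}
R[\phi+\varepsilon\psi]=\lambda_{m}^{\mathrm{rad}}+\varepsilon^{2}\bigl(\mu_{2}(B)-\lambda_{m}^{\mathrm{rad}}\bigr)\|\psi\|_{L^{2}}^{2}+O(\varepsilon^{3}).
\end{equation*}
When $m<m_{0}$ we have $\lambda_{m}^{\mathrm{rad}}>\mu_{2}(B)$, so the quadratic coefficient is strictly negative, $\phi$ is not a local minimizer, and $\lambda_{m}(B)<\lambda_{m}^{\mathrm{rad}}(B)$. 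In particular no minimizer can be radial, and by \eqref{eq:oi_optimaldis} the optimal density $\ell_{\mathrm{opt}}$ is non-constant.

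For $m>m_{0}$, let $u\ge 0$ be a minimizer (replace by $|u|$ if needed). Assuming the positivity $u>0$ on $\partial B$, the Euler--Lagrange system reduces to $-\Delta u=\lambda_{m}u$ in $B$ together with $\partial_{\nu}u\equiv -\int_{\partial B}u/m$ on $\partial B$, i.e., a \emph{constant} normal derivative. Decomposing $u=\sum_{k\ge 0}u_{k}(r)Y_{k}(\theta)$ in spherical harmonics, the constant normal derivative forces $u_{k}'(R)=0$ for every $k\ge 1$, so in each non-radial sector $u_{k}(r)Y_{k}(\theta)$ is a Neumann eigenfunction of $-\Delta$ on $B$ with eigenvalue $\lambda_{m}$. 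Since $\lambda_{m}\le\lambda_{m}^{\mathrm{rad}}<\mu_{2}(B)$ and every non-radial Neumann eigenvalue of $B$ is at least $\mu_{2}(B)$, each $u_{k}$ with $k\ge 1$ vanishes, so $u$ is radial and coincides with $\phi$.

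The main obstacle is justifying the strict positivity $u>0$ on $\partial B$ for $m>m_{0}$, which is what converts the sign-valued boundary EL into a constant Neumann-type condition. For $m$ large this is immediate since $\lambda_{m}\to 0$ and the minimizer approaches a positive constant; to propagate down to $m_{0}$ one may argue either by continuity of the minimizer in $m$ along the open set $\{m:\lambda_{m}<\mu_{2}(B)\}$, or by a regularity/Hopf rigidity argument ruling out a boundary nodal set $\{u=0\}\subset\partial B$ of positive surface measure, which would force $\lambda_{m}$ above a mixed Dirichlet--Robin eigenvalue strictly greater than $\mu_{2}(B)$.
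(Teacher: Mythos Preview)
The paper does not contain its own proof of this statement: Theorem~\ref{thm:sym_breaking} is quoted verbatim from \cite{BBN17} and used as background. There is therefore no proof in the paper to compare against directly. That said, your second-variation argument for $0<m<m_0$ is essentially the same mechanism the paper deploys in the proof of Proposition~\ref{prop:sym_breaking_lb} for the lower-bound variant: perturb the radial candidate by a first nontrivial Neumann eigenfunction (or, there, by the eigenfunction of \eqref{eq:rayleigh_oi}), observe that the linear term vanishes by the Robin eigenvalue equation, and read off a strictly negative quadratic term from $\lambda_m^{\mathrm{rad}}>\mu_2(B)$. Your computation of $R[\phi+\varepsilon\psi]=\lambda_m^{\mathrm{rad}}+\varepsilon^2(\mu_2-\lambda_m^{\mathrm{rad}})\|\psi\|^2+O(\varepsilon^3)$ is correct.

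The $m>m_0$ half, however, has a genuine gap that you yourself flag but do not close. The spherical-harmonic argument is valid \emph{only once} you know the trace of the minimizer is strictly positive on all of $\partial B$, since only then does the Euler--Lagrange condition reduce to the constant Neumann datum $\partial_\nu u\equiv -\tfrac{1}{m}\int_{\partial B}u$. Your two suggested repairs are not sufficient as stated. Continuity of the minimizer in $m$ presupposes uniqueness, which is precisely what is at stake near $m_0$ (and fails there), and in any case continuity alone does not preclude the boundary trace touching zero at some intermediate $m$. The ``mixed Dirichlet--Robin'' comparison is also not straightforward: on the putative nodal set $\{u=0\}\subset\partial B$ the Euler--Lagrange system yields only the subdifferential inequality $|\partial_\nu u|\le \tfrac{1}{m}\int_{\partial B}|u|$, not a Dirichlet condition, so there is no clean mixed eigenvalue to compare to. In \cite{BBN17} this positivity is established by a separate argument; you would need either to reproduce that step or to supply an independent proof before the harmonic decomposition can be invoked.
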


In \cite{BBN17} it is further noted that this threshold value $m_0$ is given by the unique positive $m$ for which $\lambda_m = \mu_2$, the first non-zero eigenvalue of the Neumann problem. A numerical estimation of the critical value gives $m_0 \approx 1.8534$ for $B_1(0) \subset \mathbb{R}^2$ and $m_0\approx 5.7963$ for $B_1(0) \subset \mathbb{R}^3$.

A corresponding shape optimization problem for a fixed mass $m>0$ is defined as follows:
\begin{equation}\label{problem_m}
\min\big\{\lambda_m( \Omega)\ :\ \Omega\in\mathcal{C}_V(Q)\big\}\tag{$\mathbf{P_m}$}
\end{equation}
where
$$\mathcal{C}_V(Q)=\left\lbrace\Omega\subset Q\ :\ \Omega\text{ is convex and open in } \mathbb{R}^d \text{ and }|\Omega|=V\right\rbrace$$
being $V>0$ and $Q\subset\mathbb{R}^d$ a convex and compact domain for $d = 2,3$.

\begin{proposition}[{\cite[Proposition 1]{keller}}]\label{thm:oi_ex}
There exists an optimal domain $\Omega$ for the shape optimization problem \eqref{problem_m}.
\end{proposition}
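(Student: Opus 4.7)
My plan is to combine the Blaschke selection theorem with a lower semicontinuity argument for the functional $\Omega\mapsto\lambda_m(\Omega)$ with respect to Hausdorff convergence of convex sets. Since $Q$ is compact, any minimizing sequence $\Omega_n\in\mathcal{C}_V(Q)$ admits, by Blaschke's theorem, a subsequence (not relabeled) converging in the Hausdorff distance to a convex compact set whose interior $\Omega$ I take as the candidate limit. Because volume is continuous on convex bodies under Hausdorff convergence (as soon as the limit has non-empty interior, which is guaranteed by $|\Omega_n|=V$ and the uniform bound from $Q$), one obtains $|\Omega|=V$ and hence $\Omega\in\mathcal{C}_V(Q)$.

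The heart of the proof is the inequality $\lambda_m(\Omega)\le\liminf_n\lambda_m(\Omega_n)$. I would let $u_n\in H^1(\Omega_n)$ be a minimizer in \eqref{eq:rayleigh_oi}, normalized so that $\|u_n\|_{L^2(\Omega_n)}=1$, and use that convex domains admit a uniform extension operator (they satisfy a uniform cone condition inside $Q$) to produce $\widetilde u_n\in H^1(Q)$ with $\|\widetilde u_n\|_{H^1(Q)}\le C\|u_n\|_{H^1(\Omega_n)}$, which is bounded since $\lambda_m(\Omega_n)$ is bounded along a minimizing sequence. Passing to a subsequence gives $\widetilde u_n\rightharpoonup\widetilde u$ in $H^1(Q)$ and strongly in $L^2(Q)$, and because $|\Omega_n\triangle\Omega|\to 0$ the normalization passes to $u:=\widetilde u|_\Omega$, yielding $\|u\|_{L^2(\Omega)}=1$. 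The Dirichlet term is handled by writing $\int_{\Omega_n}|\nabla\widetilde u_n|^2\,dx=\int_Q|\nabla\widetilde u_n|^2\chi_{\Omega_n}\,dx$ and combining weak lower semicontinuity in $L^2(Q)$ with $\chi_{\Omega_n}\to\chi_\Omega$ in $L^1(Q)$.

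The main obstacle is the $L^1$ boundary term, because the traces live on varying surfaces. Here I would exploit the convexity crucially: for convex sets, Hausdorff convergence forces $\mathcal{H}^{d-1}\lfloor\partial\Omega_n\rightharpoonup\mathcal{H}^{d-1}\lfloor\partial\Omega$ as measures on $Q$, and the outer normal maps enjoy a corresponding semicontinuity. Combining this with the continuity of the trace operator under the uniform Lipschitz regularity granted by the convexity, one obtains
\begin{equation*}
\int_{\partial\Omega}|u|\,ds\le\liminf_{n\to\infty}\int_{\partial\Omega_n}|u_n|\,ds,
\end{equation*}
which, squared and divided by $m$, yields the liminf inequality for the boundary contribution. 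Putting the three inequalities together gives $\lambda_m(\Omega)\le\liminf_n\lambda_m(\Omega_n)$, and since the right-hand side is the infimum in \eqref{problem_m}, $\Omega$ is optimal. Alternatively, one may replace this direct trace argument by the equivalent formulation $\lambda_m(\Omega)=\inf_{\ell\in\mathcal{H}_m(\partial\Omega)}\lambda_\ell(\Omega)$ and invoke a Mosco-type $\Gamma$-convergence for the relaxed functional in \eqref{rein:eigval_ell} on varying convex domains, testing with $\ell_{\mathrm{opt}}$ from \eqref{eq:oi_optimaldis} to transfer the optimum between $\Omega_n$ and $\Omega$; this routes around the pointwise boundary analysis but relies on the same underlying convexity estimates.
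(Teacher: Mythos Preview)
Your approach is sound in outline but takes a genuinely different route from the paper's. The paper (following \cite{keller}, and visible in detail in the proof of Proposition~\ref{rein:theorem:so_existence} for the lower-bound variant) extends the eigenfunctions $u_n$ by \emph{zero} outside $\Omega_n$, obtaining a bounded sequence in $SBV(Q)$ whose jump set is $\partial\Omega_n$; the uniform trace inequality on $\mathcal{C}_V(Q)$ is what bounds the jump part, $SBV$ compactness \cite{BG,BG10} then delivers a limit, and the boundary liminf comes for free from the lower semicontinuity of jump energies \cite{braides}. You instead invoke a uniform $H^1$-extension operator (which does exist, via the uniform cone condition enjoyed by convex sets of fixed volume inside $Q$) and work with weak $H^1(Q)$ compactness, never touching $SBV$. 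Your route is more elementary in that it avoids the $SBV$ machinery; the paper's route makes the boundary step automatic once that machinery is in place.

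The one place where your argument is genuinely incomplete is the boundary liminf. Weak-$*$ convergence of the surface measures $\mathcal{H}^{d-1}\lfloor\partial\Omega_n$ together with an appeal to ``continuity of the trace operator'' does not by itself yield $\int_{\partial\Omega}|u|\,ds\le\liminf_n\int_{\partial\Omega_n}|u_n|\,ds$: the integrands live on different surfaces and you only have \emph{weak} $H^1$ convergence of the extensions, so traces need not converge. A clean way to close the gap within your framework is to take $u_n\ge 0$ without loss of generality, fix $\phi\in C^1_c(Q;\mathbb{R}^d)$ with $|\phi|\le 1$, and write
\[
\int_{\partial\Omega_n}u_n\,ds\ \ge\ \int_{\partial\Omega_n}u_n\,\phi\cdot\nu_n\,ds\ =\ \int_{\Omega_n}\big(u_n\,\mathrm{div}\,\phi+\nabla u_n\cdot\phi\big)\,dx;
\]
the right-hand side passes to the limit by pairing the strong convergence of $\chi_{\Omega_n}$ and $\widetilde u_n$ with the weak convergence of $\nabla\widetilde u_n$, and taking the supremum over such $\phi$ recovers $\int_{\partial\Omega}|u|\,ds$. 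This is, in disguise, exactly the $SBV$ jump-lsc argument applied to the zero extension of $u_n$, which is why the paper's packaging is tighter here.
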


Existence of a solution is proven with compactness results for special functions of bounded variation \cite{BG,BG10} and the compactness results for convex domains \cite{BG97}. In particular, the uniform bound on the trace inequality is important which implies the strong $L^2$-convergence of the eigenfunctions of a minimizing sequence \cite{keller}. Without the convexity constraint, non-existence of an optimal domain can be established by taking a disjoint union of $n$ balls of radius $r_n\to0$, giving that the infimum in problem \eqref{problem_m} vanishes, see \cite{BB19}.

\section{Imposing a lower bound on the optimal insulation}

For the previously defined eigenvalue of optimal insulation a breaking of symmetry can be observed for small values of insulating material, leading to concentration breaking of the protective layer, such that some part of the boundary is not insulated. 
Since the film can have additional purposes other then insulation, such as for stability or against contamination, it appears interesting to look at insulation problems, for which a positive lower bound is imposed on the insulation such that no part of the boundary is left uncovered. Then the admissible distributions of insulating material are restricted to
\begin{equation}\label{eq:dis_lb}
\mathcal{H}_{\widehat{m},\ell_{\min}}(\partial \Omega): = \left\lbrace \widehat{\ell}:\partial\Omega\to\mathbb{R}\ :\ \widehat{\ell} \ge \ell_{\min}, \int_{\partial \Omega} \widehat{\ell}\, \mathrm{d} s = \widehat{m}\right\rbrace 
\end{equation}
for a positive lower bound $\ell_{\min}>0$. 

Instead of seeking the optimal distribution $\widehat{\ell}\in \mathcal{H}_{\widehat{m},\ell_{\min}}(\partial \Omega)$, we partition the insulating layer into a fixed, constant part of thickness $\ell_{\min}$ and a variable part $\ell \in \mathcal{H}_{m}(\partial \Omega)$ such that $ \widehat{\ell} = \ell_{\min} + \ell$, with
$$m:=\widehat{m}-|\partial\Omega|\,\ell_{\min}.$$
A constant distribution of insulating material corresponds to Robin boundary condition. Therefore for a constant, positive lower bound $\ell_{\min} > 0$, an equivalent formulation of this problem can be derived from a similar model reduction for \eqref{G_eps_pde} as in \cite{BBN17}, but with Robin boundary conditions instead of Dirichlet boundary conditions on $\partial\Omega_\varepsilon$, see \eqref{F_eps_pde}. 

This problem and the existence of minimizers was considered in \cite{della2021optimization}, and the results imply that these two approaches are indeed equivalent.

Let $\Omega_\varepsilon=\Omega\cup\Sigma_\varepsilon$, with $\Sigma_\varepsilon$ as in \eqref{sigmaeps}, with $\ell : \partial \Omega \mapsto (0,+\infty)$ a bounded Lipschitz function with a positive lower bound. In \cite{della2021optimization} an energy problem is considered, which seeks a function $u \in H^1(\Omega)$ minimizing for a given heat source $ f \in L^2(\Omega)$ the functional
\begin{equation*}
F_\varepsilon(u) = \frac{1}{2} \int_\Omega \vert \nabla u \vert^2 \, \mathrm{d}x + \frac{\varepsilon}{2} \int_{\Sigma_\varepsilon}\vert \nabla u \vert^2 \, \mathrm{d}x + \frac{\beta}{2} \int_{\partial \Omega_\varepsilon} u^2\, \mathrm{d} \mathcal{H}^{n-1} - \int_{\Omega} fu \, \mathrm{d}x 
\end{equation*}
for a fixed parameter $\beta > 0$. Assuming sufficient regularity, the minimizer of $F_\varepsilon$ satisfies
\begin{equation}\label{F_eps_pde}
\begin{cases}
-\Delta u_\varepsilon=f\quad\text{in }\Omega,\\
-\Delta u_\varepsilon=0\quad\text{in }\Sigma_\varepsilon,\\
\displaystyle\frac{\partial u_\varepsilon}{\partial\nu}+\beta u_\varepsilon=0\quad\text{on }\partial\Omega_\varepsilon,\\
\displaystyle\frac{\partial u_\varepsilon^-}{\partial\nu}=\varepsilon\frac{\partial u_\varepsilon^+}{\partial \nu}\quad\text{on }\partial\Omega.
\end{cases}
\end{equation}
It is proven in \cite{della2021optimization} that as $\varepsilon\to0 $ the sequence of functionals $F_\varepsilon$ $\Gamma$-converges in the strong $L^2$-topology to 
\begin{equation*}
F_\beta(u,\ell) = \frac{1}{2}\int_{\Omega}|\nabla u|^2\,\mathrm{d}x-\int_{\Omega}fu\,\mathrm{d}x + \frac{\beta}{2} \int_{\partial\Omega}\frac{u^2}{1+\beta \ell}\,\mathrm{d}s.
\end{equation*}
This corresponds to having two layers of insulating material, one being described by $\ell$, and one of constant thickness $\beta^{-1}=\ell_{\min}$. The minimizer of the functional $F_\beta(\cdot, \ell) $ satisfies
$$\begin{cases}
-\Delta u=f&\text{in }\Omega\\
\displaystyle(1+\beta \ell)\frac{\partial u}{\partial \nu}+\beta u = 0&\text{on }\partial\Omega.
\end{cases}$$
Similar as to the problem in optimal insulation without a lower bound we seek a solution for the following minimization problem 
\begin{equation}\label{P}
\min\Big\{F_\beta(v,\ell)\ :\ (v,\ell)\in H^1(\Omega)\times\mathcal{H}_{m}(\partial\Omega)\Big\}
\end{equation}
where $\mathcal{H}_{m}(\partial\Omega)$ is the set of admissible distributions
\begin{equation*}
\mathcal{H}_{m}(\partial\Omega)=\left\lbrace\ell\in L^1(\partial\Omega),\ \ell\ge0,\ \int_{\partial\Omega}\ell\,\mathrm{d}s=m\right\rbrace.
\end{equation*}
The following proposition shows that for a given $v\in H^1(\Omega)$ there exists a corresponding optimal distribution.

\begin{proposition}[{\cite[Proposition 4.1]{della2021optimization}}] \label{lemma:lower_bound_optimal_distribution}
Let $\beta>0$, $m>0$ be fixed, let $v \in L^2(\partial \Omega)$, and $h_v \in L^2(\partial \Omega)$ be the function defined by
\begin{equation}\label{rein:optimal_h_u}
h_v(s):=\begin{cases} (c_v\beta)^{-1}\vert v (s)\vert - \beta^{-1} & \text{ if } \vert v(s) \vert \ge c_v, \\ 0 & \text{ otherwise,} \end{cases}
\end{equation}
where $c_v$ is the unique positive constant satisfying
\begin{equation} \label{rein:eq:c_u}
c_v=\big(\big|\{|v|\ge c_v\}\big|+m\beta\big)^{-1} \int_{\{|v|\ge c_v\}}|v(s)|\,\mathrm{d}s.
\end{equation}
In particular $c_v = 0$ if and only if $v= 0$ $\mathcal{H}^{d-1}$-a.e. on $\partial\Omega$. Then $h_v$ is the solution of the minimization problem
\begin{equation*}
\min\left\{\int_{\partial \Omega} \frac{v^2}{\beta^{-1}+\ell}\,\mathrm{d}s\ :\ \ell\in\mathcal{H}_{m}(\partial \Omega)\right\}.
\end{equation*}
\end{proposition}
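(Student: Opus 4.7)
The plan is to treat this as a strictly convex minimization in $\ell$ subject to one linear equality constraint $\int_{\partial\Omega}\ell\,\mathrm{d}s = m$ and the pointwise constraint $\ell\geq 0$, derive the structure of the minimizer from a Lagrange-multiplier heuristic, and verify optimality through a first-order convexity inequality. The motivation for \eqref{rein:optimal_h_u} is that $\phi(t) := v^2/(\beta^{-1}+t)$ is strictly convex and decreasing on $[0,\infty)$ with $\phi'(t) = -v^2/(\beta^{-1}+t)^2$; the formal KKT condition with Lagrange multiplier $\mu > 0$ reads $\phi'(\ell)+\mu = 0$ where $\ell > 0$ and $\phi'(0)+\mu\geq 0$ where $\ell = 0$, and writing $\mu = (c\beta)^2$ gives exactly the ansatz in \eqref{rein:optimal_h_u}.

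Next, I would establish existence and uniqueness of $c_v$. Rewriting \eqref{rein:eq:c_u} as
$$F(c) := \int_{\partial\Omega}(|v|-c)_+\,\mathrm{d}s = c m\beta,$$
the left-hand side is continuous, convex, and nonincreasing, with $F(0) = \|v\|_{L^1(\partial\Omega)}$, $F(c)\to 0$ as $c\to\infty$, and $F$ strictly decreasing on $\{F>0\}$; the right-hand side is strictly increasing and vanishes at $c=0$. Hence there is a unique $c_v\geq 0$ solving the equation, with $c_v = 0$ iff $v = 0$ a.e.\ on $\partial\Omega$. Feasibility $h_v\in\mathcal{H}_m(\partial\Omega)$ is immediate: $h_v\geq 0$ by construction, and $\int h_v\,\mathrm{d}s = m$ follows by dividing \eqref{rein:eq:c_u} through by $c_v\beta$ (or is trivial when $v=0$).

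For optimality I would apply the first-order inequality for convex functions pointwise and integrate to obtain, for every $\ell\in\mathcal{H}_m(\partial\Omega)$,
$$\int_{\partial\Omega}\frac{v^2}{\beta^{-1}+\ell}\,\mathrm{d}s - \int_{\partial\Omega}\frac{v^2}{\beta^{-1}+h_v}\,\mathrm{d}s \;\geq\; -\int_{\partial\Omega}\frac{v^2}{(\beta^{-1}+h_v)^2}(\ell - h_v)\,\mathrm{d}s.$$
On $A := \{|v|\geq c_v\}$, the formula for $h_v$ gives $\beta^{-1}+h_v = |v|/(c_v\beta)$, so the right-hand integrand equals $-(c_v\beta)^2(\ell-h_v)$; on $A^c$, where $h_v = 0$, it equals $-\beta^2 v^2\,\ell$. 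Using $\int_A(\ell-h_v)\,\mathrm{d}s = -\int_{A^c}\ell\,\mathrm{d}s$, which follows from $\int\ell\,\mathrm{d}s = \int h_v\,\mathrm{d}s = m$ together with $h_v$ being supported in $A$, the right-hand side collapses to $\beta^2\int_{A^c}(c_v^2 - v^2)\,\ell\,\mathrm{d}s$, which is nonnegative since $|v| < c_v$ and $\ell\geq 0$ on $A^c$. Thus $h_v$ attains the infimum.

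The main delicate point I anticipate is that $c \mapsto |\{|v|\geq c\}|$ may jump where $|v|$ has level sets of positive measure, which would make \eqref{rein:eq:c_u} awkward to solve uniquely in its raw form. The reformulation via $F$ bypasses this entirely, as $F$ is Lipschitz with constant $|\partial\Omega|$ and strictly decreasing on $\{F>0\}$, so the unique-crossing argument requires no additional regularity on $v$ beyond $v\in L^2(\partial\Omega)$.
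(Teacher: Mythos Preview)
Your proof is correct and complete. Note, however, that the paper does not actually prove this proposition: it is quoted verbatim from \cite[Proposition~4.1]{della2021optimization} and stated without argument, so there is no ``paper's own proof'' to compare against.

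That said, your approach is the natural one and would serve well as a self-contained justification. The key steps---rewriting \eqref{rein:eq:c_u} as $\int_{\partial\Omega}(|v|-c)_+\,\mathrm{d}s = cm\beta$ to obtain existence and uniqueness of $c_v$ cleanly (avoiding the discontinuities of $c\mapsto|\{|v|\ge c\}|$), checking feasibility $h_v\in\mathcal{H}_m(\partial\Omega)$ directly from the defining equation, and then verifying optimality via the pointwise convexity inequality for $t\mapsto v^2/(\beta^{-1}+t)$---are all sound. The computation that reduces the first-order lower bound to $\beta^2\int_{A^c}(c_v^2-v^2)\,\ell\,\mathrm{d}s\ge 0$ is correct; the cancellation uses exactly that $h_v$ is supported in $A=\{|v|\ge c_v\}$ and that both $\ell$ and $h_v$ integrate to $m$. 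The degenerate case $v=0$ a.e.\ (where $c_v=0$ and the formula for $h_v$ is formally singular) is indeed trivial, as you note, since the functional vanishes identically on $\mathcal{H}_m(\partial\Omega)$.
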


Knowing the optimal distribution $\ell$ for a given function $v \in H^1(\Omega)$ leads to a proof of the existence of a pair of minimizers.

\begin{theorem}[{\cite[Theorem 4.1]{della2021optimization}}]\label{thm:lower_bound_existence}
Given any $\beta,m > 0$, there exits a couple $(u,h_u) \in H^1(\Omega) \times L^2(\partial \Omega)$, with $h_u \in \mathcal{H}_{m}(\partial \Omega)$, which minimizes \eqref{P}. Moreover,
\begin{equation*}
h_u(s):=\begin{cases}
(c_u\beta)^{-1}|u (s)| - \beta^{-1}&\text{if } |u(s)|\ge c_u,\\
0&\text{otherwise,}
\end{cases}
\end{equation*}
where $c_u$ is the unique positive constant satisfying
\begin{equation}\label{eq:c_u}
c_u=\big(\big|\{|u|\ge c_u\}\big|+m\beta\big)^{-1} \int_{\{|u|\ge c_u\}}|u(s)|\,\mathrm{d}s.
\end{equation}
Furthermore, the couple $(u,h_u)$ is a solution of
$$\begin{cases}
-\Delta u=f&\text{in }\Omega\\
(1+\beta h_u)\partial_\nu u+\beta u=0&\text{on }\partial\Omega,
\end{cases}$$
and is unique if the domain $\Omega$ is connected.
\end{theorem}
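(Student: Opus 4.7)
My plan is to apply the direct method of the calculus of variations to the reduced functional
\[
J(v) := F_\beta(v, h_v) = \inf_{\ell \in \mathcal{H}_m(\partial\Omega)} F_\beta(v,\ell), \qquad v \in H^1(\Omega),
\]
where $h_v$ is the explicit optimal distribution given by Proposition \ref{lemma:lower_bound_optimal_distribution}. Minimizing $J$ over $H^1(\Omega)$ is then equivalent to minimizing $F_\beta$ over the full product space $H^1(\Omega)\times \mathcal{H}_m(\partial\Omega)$, so it suffices to produce a minimizer $u$ of $J$ and to set $h_u$ via formulas \eqref{rein:optimal_h_u}--\eqref{eq:c_u}.

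For coercivity of $J$ on $H^1(\Omega)$, the identity $\int_{\partial\Omega}(1+\beta h_v)\,\mathrm{d}s = |\partial\Omega|+\beta m$ combined with Cauchy--Schwarz gives
\[
\|v\|_{L^1(\partial\Omega)}^2 \le (|\partial\Omega|+\beta m)\int_{\partial\Omega}\frac{v^2}{1+\beta h_v}\,\mathrm{d}s,
\]
and together with a Poincar\'e--trace inequality of the form $\|v\|_{L^2(\Omega)}\le C(\|\nabla v\|_{L^2(\Omega)}+\|v\|_{L^1(\partial\Omega)})$ and Young's inequality on $-\int_\Omega fv$, this forces $J(v)\to+\infty$ as $\|v\|_{H^1(\Omega)}\to+\infty$.

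The core step is lower semicontinuity. Taking a minimizing sequence $\{v_n\}$ with $v_n\rightharpoonup u$ in $H^1(\Omega)$ yields $v_n\to u$ strongly in $L^2(\partial\Omega)$ by trace compactness, and extracting further, $h_{v_n}\rightharpoonup^\ast \ell^*$ in $\mathcal{M}(\partial\Omega)$ from $\int h_{v_n}=m$. The structural observation is that the perspective map $(v,\ell)\mapsto v^2/(1+\beta\ell)$ is jointly convex, so $F_\beta$ is jointly convex and jointly weakly lower semicontinuous, giving
\[
F_\beta(u,\ell^*) \le \liminf_n F_\beta(v_n, h_{v_n}) = \inf F_\beta.
\]
Then Proposition \ref{lemma:lower_bound_optimal_distribution} applied to $u$ produces $F_\beta(u, h_u) \le F_\beta(u, \ell^*)$, so $(u, h_u)$ is a minimizing pair. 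The main obstacle is rigorously justifying the boundary-term limit when $\ell^*$ is a priori only a measure. I would bypass this by showing directly that $h_{v_n}\to h_u$ in $L^1(\partial\Omega)$: the pointwise bound $h_{v_n}\le (c_{v_n}\beta)^{-1}|v_n|$ from \eqref{rein:optimal_h_u}, together with continuity of the fixed-point constant $c_v$ defined by \eqref{rein:eq:c_u} with respect to $L^2(\partial\Omega)$-convergence of $v$ and the strong trace convergence of $v_n$, reduce the limit passage to dominated convergence.

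Finally, the Robin-type boundary value problem is recovered from the Euler--Lagrange equation of $v\mapsto F_\beta(v,h_u)$ at $u$: interior test functions produce $-\Delta u = f$, while boundary test functions yield $(1+\beta h_u)\partial_\nu u+\beta u=0$. Uniqueness on connected $\Omega$ follows from strict convexity of $v\mapsto \int_\Omega|\nabla v|^2\,\mathrm{d}x$ modulo constants, combined with the Robin boundary condition which eliminates the residual constant mode.
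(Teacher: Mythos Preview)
The paper does not give its own proof of this theorem; it is quoted verbatim from \cite{della2021optimization}, and the only additional comment is that uniqueness follows from a convexity property established there (their Proposition~4.3). Your direct-method strategy for existence is essentially the argument one finds in that reference: reduce to the marginal functional $J(v)=\min_{\ell}F_\beta(v,\ell)$ via Proposition~\ref{lemma:lower_bound_optimal_distribution}, obtain coercivity from the Cauchy--Schwarz bound $\|v\|_{L^1(\partial\Omega)}^2\le(|\partial\Omega|+\beta m)\int_{\partial\Omega}v^2/(1+\beta h_v)\,\mathrm{d}s$ together with a Poincar\'e--trace inequality, and pass to the limit in the boundary term using strong $L^2(\partial\Omega)$-convergence of traces. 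Your plan to show $c_{v_n}\to c_u$ directly (continuity of the fixed point of \eqref{rein:eq:c_u} under $L^2(\partial\Omega)$-convergence) and then conclude convergence of the boundary integral is clean; the Lipschitz representation $\int_{\partial\Omega}v^2/(\beta^{-1}+h_v)\,\mathrm{d}s=\int_{\partial\Omega}G_{c_v}(v)\,\mathrm{d}s$ recorded in Remark~\ref{rein:remark:optimal_dis_lb}(2) makes this step completely elementary, since $G_c$ is globally Lipschitz with constant $2\beta c$ and $|G_{c_1}(x)-G_{c_2}(x)|\le\beta|c_1-c_2||x|$.

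The one genuine gap is your uniqueness sketch. Saying ``strict convexity of the Dirichlet energy modulo constants, then the Robin condition kills the constant'' is not quite enough here, because the Robin coefficient $1+\beta h_u$ itself depends on $u$: if $u_1=u_2+c$ are two minimizers, the two boundary conditions $(1+\beta h_{u_i})\partial_\nu u_i+\beta u_i=0$ involve \emph{different} $h_{u_i}$, and subtracting them does not immediately yield $c=0$. The argument the paper points to uses the joint convexity of the perspective map $(a,b)\mapsto a^2/b$ on $\mathbb{R}\times(0,\infty)$: if $(u_1,h_{u_1})$ and $(u_2,h_{u_2})$ are both minimizing pairs, so is their midpoint, and equality in the convexity inequality for the perspective function forces a pointwise proportionality between $(u_1,\beta^{-1}+h_{u_1})$ and $(u_2,\beta^{-1}+h_{u_2})$ on $\partial\Omega$; combined with $\nabla u_1=\nabla u_2$ and connectedness of $\Omega$ this pins down the constant. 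You have all the ingredients (you already invoke joint convexity for lower semicontinuity), but the last paragraph should be rewritten to use it explicitly rather than appealing to a fixed Robin condition.
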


The uniqueness is a consequence of a convexity property of the functionals $F_{\beta}$, cf. \cite[Proposition 4.3]{della2021optimization}.

\begin{remark}\rm \label{rein:remark:optimal_dis_lb}
For further analysis and error estimates, the following observations are useful.
\begin{enumerate}
\item The optimal distribution is scaling invariant, in the sense that for $w = av$, it holds that $h_v = h_w$, with $ v \in H^1(\Omega)$ such that $v \ge 0, a > 0$. 
\item Occasionally, it is convenient to write the boundary term as
\begin{equation*}
\int_{\partial \Omega} \frac{u^2}{\beta^{-1}+h_u} \,\mathrm{d}s = \int_{\partial \Omega} G_{c_u}(u) \,\mathrm{d}s
\end{equation*}
with the Lipschitz continuous function
\begin{equation*}
G_{c}(x) = \chi_{\{\vert x \vert < c\}} \beta x^2 + \chi_{\{\vert x \vert \ge c\}} \beta c \vert x\vert
\end{equation*}
where the dependence of $c$ on $u$ is neglected.
\item To approximate the optimal distribution for a given $u \in H^1(\Omega)$ the constant $c_u$ has to be approximated.
We use the observation, that
$$h_u=\ell_{\min}f(u,c_u)\qquad\text{with }f(v,a) = \max(v/a-1,0),$$
which is a bounded, continuous function for $a>0$ and $v \ge 0.$
For $\vert c_1- c_2 \vert \le \delta$, without loss of generality with $0<c_1 \le c_2$, and $u \ge 0$ we compute
\begin{align*}
|h_{u,c_1}-h_{u,c_2}|&=\ell_{\min} \big|\max(u/c_1- 1,0) - \max(u/c_2-1,0)\big|\\
&=\ell_{\min} \big|\chi_{\{u \ge c_2\}}( u/c_1-u/c_2) + \chi_{\{u \in [c_1,c2)\}}(u/c_1-1)\big|\\
&\le\frac{\ell_{\min}\delta}{c_1}\left( \frac{u}{c_1}+1\right)
\end{align*}
which yields an estimate for an error caused by an approximation of the constant $c_u$. Further, for fixed $c >0$ and $u_n\to u$ in $L^2(\partial \Omega)$ we have that
\begin{equation*}
\|h_{u_n,c}-h_{u,c}\|_{L^2(\partial \Omega)} \le (\ell_{\min}/c) \Vert u_n - u \Vert_{L^2(\partial \Omega)}\to0.
\end{equation*}
Since $c_{u} =0$ if and only if $u = 0$ almost everywhere on $\partial \Omega$, for the strongly convergent sequence $u_n\to0$ in $L^1(\partial\Omega)$ the formula \eqref{rein:eq:c_u} implies $0 \le c_{u_n} \le (\ell_{\min} / m) \Vert u_n \Vert_{L^1(\partial\Omega)}\to0$.
\end{enumerate}
\end{remark}

\section{The eigenvalue problem}\label{sec:lb_ev}

In \cite{della2021optimization} an energy problem is considered. Here, we are interested in the corresponding eigenvalue problem.
To minimize the heat decay rate, we consider the functional
\begin{equation} \label{eq:reinf_func_beta}
J_{\ell_{\min}} (u,\ell) = \int_{\Omega} \vert \nabla u \vert^2 \, \mathrm{d}x + \int_{\partial \Omega} \frac{u^2}{\ell_{\min} + \ell} \, \,\mathrm{d}s 
\end{equation}
for $\ell \in \mathcal{H}_{m}(\partial \Omega)$, and look for a minimizer $(u,\ell) \in H^1(\Omega)\times \mathcal{H}_{m}(\partial \Omega)$ of
\begin{equation}\label{rein:prop:eigval}
\lambda_{m,\ell_{\min}}(\Omega)=\inf_{(u,\ell)\in H^1(\Omega)\times\mathcal{H}_m(\partial\Omega)}\left\lbrace J_{\ell_{\min}}(u,\ell),\ \|u\|_{L^2(\Omega)} = 1 \right\rbrace.
\end{equation}
This is equivalent to seeking a pair of minimizers $(u,\widehat{\ell}) \in H^1(\Omega) \times \mathcal{H}_{\widehat{m},\ell_{\min}}(\partial \Omega)$, with the admissible set of distribtions defined by \eqref{eq:dis_lb}, of the functional
\begin{equation} \label{eq:reinf_func_beta:v2}
J(u,\widehat{\ell}) = \int_{\Omega} \vert \nabla u \vert^2 \, \mathrm{d}x + \int_{\partial \Omega} \frac{u^2}{\widehat{\ell}} \, \,\mathrm{d}s.
\end{equation}

Using similar arguments as in \cite{della2021optimization}, it is straightforward to show that the eigenvalue problem is well defined. From \cite{BBN17} it is known that, if $\Omega$ is a ball, breaking of symmetry does not occur if $ m + \ell_{\min} \vert \partial \Omega \vert = \widehat{m} > m_0$, with $m_0$ the critical value of mass related to the Neumann eigenvalue \cite[Theorem 3.1]{BBN17}. Without a lower bound (that is if $\ell_{\min}=0$), symmetry breaking occurs if $m<m_0$, but if $\ell_{\min}>0$, this is no longer obvious.

\begin{proposition}\label{prop:sym_breaking_lb}
Let $\Omega = B_1(0) \subset \mathbb{R}^d$ be the unit ball, and $\widehat{m}<m_0$ the critical value of mass from Theorem \ref{thm:sym_breaking}.
Then there exists no radial symmetric solution to the eigenvalue problem 
\begin{equation}\label{rein:prop:eigval_2}
\lambda_{m, \ell_{\min}} (\Omega)= \min_{0 \neq u \in H^1(\Omega)} \frac{\int_{\Omega} \vert \nabla u \vert^2 \,\mathrm{d}x + \int_{\partial \Omega} u^2(\ell_{\min}+h_u)^{-1}\,\mathrm{d}s}{\int_{\Omega}u^2 \,\mathrm{d}x}
\end{equation}
for $\ell_{\min} \in [0,\widehat{m} / \vert \partial \Omega \vert)$ with the optimal distribution $h_u$ as given by Theorem \ref{thm:lower_bound_existence}.
\end{proposition}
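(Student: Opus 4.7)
The plan is to argue by contradiction, adapting the perturbation strategy of \cite[Theorem~3.1]{BBN17} to the constrained setting. Suppose $u_r\in H^1(\Omega)$ is a radial minimizer of \eqref{rein:prop:eigval_2}. Since the boundary term in \eqref{rein:prop:eigval_2} depends on $u$ only through $|u|$, one may assume $u_r\ge 0$, and the strong maximum principle together with an ODE uniqueness argument on the radial profile forces $u_r>0$ on $\overline{\Omega}$; in particular $U:=u_r|_{\partial\Omega}$ is a positive constant. Because $|u_r|\equiv U$ on $\partial\Omega$, Proposition \ref{lemma:lower_bound_optimal_distribution} delivers $c_{u_r}=U|\partial\Omega|/(|\partial\Omega|+m/\ell_{\min})$ with $\{|u_r|\ge c_{u_r}\}=\partial\Omega$, and consequently $h_{u_r}\equiv m/|\partial\Omega|$ and $\ell_{\min}+h_{u_r}\equiv\widehat{m}/|\partial\Omega|$.

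Next, I test with $u_t:=u_r+t\phi_2$, where $\phi_2(x)=f(|x|)Y_1(x/|x|)$ is a first non-trivial Neumann eigenfunction on $B_1(0)$, with $Y_1$ a degree-one spherical harmonic and eigenvalue $\mu_2$. Orthogonality of $Y_1$ to constants on $S^{d-1}$ yields $\int_{\partial\Omega}\phi_2\,\mathrm{d}s=0$, $\int_\Omega u_r\phi_2\,\mathrm{d}x=0$, $\int_{\partial\Omega}u_r\phi_2\,\mathrm{d}s=0$, and (by Green's identity together with $\partial_\nu\phi_2=0$) $\int_\Omega\nabla u_r\cdot\nabla\phi_2\,\mathrm{d}x=0$. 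For $|t|$ small, $u_t>0$ uniformly on $\overline{\Omega}$, so $|u_t|=u_t$ and $\int_{\partial\Omega}u_t\,\mathrm{d}s=U|\partial\Omega|$ is independent of $t$. Moreover, the unconstrained optimal distribution $\widehat{m}u_t/\int_{\partial\Omega}u_t\,\mathrm{d}s$ converges uniformly to the constant $\widehat{m}/|\partial\Omega|>\ell_{\min}$, so for small $|t|$ it satisfies the lower-bound constraint and hence coincides with $\ell_{\min}+h_{u_t}$. This gives the crucial identity
\begin{equation*}
\int_{\partial\Omega}\frac{u_t^2}{\ell_{\min}+h_{u_t}}\,\mathrm{d}s=\frac{\|u_t\|_{L^1(\partial\Omega)}^2}{\widehat{m}}=\frac{(U|\partial\Omega|)^2}{\widehat{m}},
\end{equation*}
which does not depend on $t$.

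Combining this with $\int_\Omega|\nabla u_t|^2\,\mathrm{d}x=\int_\Omega|\nabla u_r|^2\,\mathrm{d}x+t^2\mu_2\int_\Omega\phi_2^2\,\mathrm{d}x$ and $\int_\Omega u_t^2\,\mathrm{d}x=1+t^2\int_\Omega\phi_2^2\,\mathrm{d}x$, the Rayleigh quotient in \eqref{rein:prop:eigval_2} expands as
\begin{equation*}
R(u_t)=\lambda_{m,\ell_{\min}}(\Omega)+t^2\bigl(\mu_2-\lambda_{m,\ell_{\min}}(\Omega)\bigr)\int_\Omega\phi_2^2\,\mathrm{d}x+O(t^4).
\end{equation*}
Monotonicity of $m\mapsto\lambda_m(\Omega)$ and the characterization $\lambda_{m_0}(\Omega)=\mu_2$ give $\lambda_{\widehat{m}}(\Omega)>\mu_2$ whenever $\widehat{m}<m_0$, while the formulation \eqref{eq:reinf_func_beta:v2} shows $\lambda_{m,\ell_{\min}}(\Omega)\ge\lambda_{\widehat{m}}(\Omega)$, because the constraint $\widehat{\ell}\ge\ell_{\min}$ shrinks the admissible set of distributions. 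Thus the $t^2$-coefficient is strictly negative and $R(u_t)<\lambda_{m,\ell_{\min}}(\Omega)$ for $0<|t|\ll 1$, contradicting the minimality of $u_r$. The main technical point is precisely the constancy in $t$ of the boundary integral, which relies on three ingredients working together: positivity of $u_t$ (so $|u_t|=u_t$), the cancellation $\int_{\partial\Omega}\phi_2\,\mathrm{d}s=0$ that pins $\int u_t\,\mathrm{d}s$, and the strict inequality $\ell_{\min}<\widehat{m}/|\partial\Omega|$ ensuring admissibility of the unconstrained optimum under the perturbation.
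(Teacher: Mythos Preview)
Your argument is correct and in fact cleaner than the paper's. Both proofs proceed by a second-order perturbation of a putative radial minimizer, but they differ in the choice of test direction and in how the boundary term is handled. The paper perturbs by the (nonnegative) eigenfunction $v$ of the unconstrained problem $\lambda_{\widehat{m}}(\Omega)$ and carries out a lengthy explicit expansion of $\int_{\partial\Omega}(u+\varepsilon v)^2/(\ell_{\min}+h_{u+\varepsilon v})\,\mathrm{d}s$ via the formula for $h_{u+\varepsilon v}$, eventually obtaining the second-variation coefficient $\lambda_{\widehat{m}}(\Omega)-\lambda_1(\Omega,|\partial\Omega|/\widehat{m})<0$. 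You instead perturb by the first nontrivial Neumann eigenfunction $\phi_2$; the mean-zero property $\int_{\partial\Omega}\phi_2\,\mathrm{d}s=0$ together with your observation that for small $|t|$ the \emph{unconstrained} optimal layer $\widehat{m}\,u_t/\!\int_{\partial\Omega}u_t$ already satisfies the lower bound (hence coincides with $\ell_{\min}+h_{u_t}$) collapses the entire boundary term to the constant $(U|\partial\Omega|)^2/\widehat{m}$. This replaces the paper's expansion by a one-line identity and yields the second-variation coefficient $\mu_2-\lambda_{m,\ell_{\min}}(\Omega)$, which you show is negative via the soft chain $\lambda_{m,\ell_{\min}}(\Omega)\ge\lambda_{\widehat{m}}(\Omega)>\mu_2$. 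What you gain is brevity and transparency; what the paper's computation buys is the full second-variation formula of the constrained Rayleigh quotient for a general perturbation bounded from below, which carries somewhat more information about the local energy landscape near the radial critical point.
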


\begin{proof}
Let us proceed step by step.
\begin{enumerate}
\item The case $\ell_{\min} = 0$ corresponds to the eigenvalue problem without a lower bound. The non-existence of a radial solution in the case $\widehat{m} < m_0$ is therefore covered by Theorem \ref{thm:sym_breaking}.
\item In the case $\ell_{\min} >0$, for a radial function $u\in H^1(\Omega)$ the optimal distribution, as given by Theorem \ref{thm:lower_bound_existence}, is constant. Without loss of generality we assume that $u$ is non-negative on $\partial \Omega$. The constant $c_u$, see \eqref{eq:c_u}, is then given by
\begin{equation}\label{eq:cu_radial}
\begin{split}
c_u&=\frac{\ell_{\min}}{\ell_{\min}\vert \partial \Omega \vert+m} \int_{\partial \Omega} u \, \mathrm{d}s = \frac{\ell_{\min}\vert \partial \Omega \vert}{\ell_{\min}\vert \partial \Omega \vert+m}\,\bar{u}\\
&=\frac{{\ell_{\min}\vert \partial \Omega \vert}}{\widehat{m}}\,\bar{u} < u\qquad\text{ a.e. on } \partial \Omega.
\end{split}
\end{equation}
where $m=\widehat{m}-\ell_{\min}|\partial\Omega|$ is the free mass that $h_u$ distributes, and $\bar{u}=(|\partial \Omega|)^{-1}\int_{\partial\Omega} u\,\mathrm{d}s$ is the integral mean of the trace of $u$. Since $u$ is assumed to be radially symmetric $u\equiv\bar{u}$ a.e. on $\partial \Omega$, and therefore $\{ \vert u \vert \ge c_u \} = \partial \Omega$.
The optimal insulation is then given by the constant distribution of insulating material
\begin{equation*}
h_u \equiv \widehat{m}/\vert \partial \Omega \vert  - \ell_{\min}.
\end{equation*}
This implies that for any radial symmetric minimizer to \eqref{rein:prop:eigval_2} the eigenvalue problem becomes equivalent to the Robin eigenvalue problem
\begin{equation}\label{robinev}
\lambda_1(\Omega,\alpha) = \min_{0 \neq u \in H^1(\Omega)} \frac{\int_{\Omega} \vert \nabla u \vert^2 \,\mathrm{d}x + \alpha \int_{\partial \Omega}u^2\,\mathrm{d}s}{\int_{\Omega}u^2 \,\mathrm{d}x}
\end{equation}
for $\alpha = (\widehat{m}/ \vert \partial \Omega \vert)^{-1}$, such that $u \in H^1(\Omega)$ is an eigenfunction to first Robin eigenvalue.
The first Robin eigenvalue is simple and the eigenfunction can be chosen to satisfy $u \ge \delta >0$ on $\overline{\Omega}$ see e.g. the Krein-Rutman theorem or \cite{Arendt}. Without loss of generality we assume that $\Vert u \Vert_{L^2(\Omega)} =1$.
\item Let $v \in H^1(\Omega)$ be a test function which is bounded from below on $\partial \Omega$. 
Due to \eqref{eq:cu_radial} we have $u + \varepsilon v > c_u>0$ for $\varepsilon > 0$ small enough, such that $\{u + \varepsilon v \ge c_u \} = \partial \Omega$. If $c_u < c_{u+\varepsilon v}$, we compute
\begin{align*}
0<c_{u+\varepsilon v} - c_u
&=(\ell_{\min} / \widehat{m}) \left(\int_{\{u + \varepsilon v \ge c_{u+\varepsilon v} \}} \hspace{-4em} u+\varepsilon v \, \mathrm{d}s - \int_{\partial \Omega} u \, \mathrm{d}s\right)\\
&=(\ell_{\min} / \widehat{m}) \left(\int_{\partial \Omega} \varepsilon v\,\mathrm{d}s - \int_{\{u + \varepsilon v \in (c_{u}, c_{u+\varepsilon v})\}} \hspace{-4em} u + \varepsilon v\,\mathrm{d}s\right)\\
& \le( \ell_{\min} / \widehat{m}) \int_{\partial \Omega} \varepsilon v\,\mathrm{d}s.
\end{align*}
Now, since
$$\lim_{\varepsilon\to0}c_{u+\varepsilon v}=c_u\qquad\text{and}\qquad u + \varepsilon v > c_u,$$
we have that for $\varepsilon$ small enough $u + \varepsilon v > c_{u +\varepsilon v}$. A similar computation holds for the case $ c_u \ge c_{u+\varepsilon v}$, for which $\{u + \varepsilon v \ge c_{u+\varepsilon v} \} = \partial \Omega$ by assumption. Then, the above computation yields
\begin{equation*}
c_{u+\varepsilon v} - c_u=\frac{\ell_{\min}}{\widehat{m}}\int_{\partial \Omega} \varepsilon v \, \mathrm{d}s
\end{equation*}
for any $v \in H^1(\Omega)$ such that the trace of $v$ is bounded from below on $\partial \Omega$ and $\epsilon > 0$ is small enough.
The optimal distribution for the perturbed function is given by
\begin{align*}
h_{u+\varepsilon v}&=(\ell_{\min}/ c_{u+\varepsilon v})( u + \varepsilon v) - \ell_{\min}\\
&=\frac{\ell_{\min}(u+\varepsilon v)}{c_u +( \ell_{\min} / \widehat{m}) \int_{\partial \Omega} \varepsilon v \, \mathrm{d}s}-\ell_{\min}
\end{align*}
 such that the optimal distribution is differentiable with respect to $\varepsilon$ for $\varepsilon >0$ small enough.
 We set $f(\varepsilon) = \ell_{\min} + h_{u+\varepsilon v}$ and compute the first derivative
\begin{align*}
\frac{d}{d\varepsilon}(f(\varepsilon) )\vert_{\varepsilon = 0} &= \frac{c_u\ell_{\min}v-\ell_{\min}^2 \bar{u} \widehat{m}^{-1} \int_{\partial \Omega} v \, \mathrm{d}s}{c_u^2} 
\\&= \frac{\ell^2_{\min} \bar{u}\vert \partial \Omega \vert}{\widehat{m}c_{u}^2}\Big(v - 1/\vert \partial \Omega \vert \int_{\partial \Omega} v \,\mathrm{d}s\Big).
\end{align*}
The second derivative yields
\begin{align*}
\frac{d^2}{d^2\varepsilon}(f(\varepsilon) )\vert_{\varepsilon = 0}
&= \frac{-2\ell_{\min}\widehat{m}^{-1} \int_{\partial \Omega} v \mathrm{d}s\Big(c_u\ell_{\min}v-\ell_{\min}^2 \bar{u} \widehat{m}^{-1} \int_{\partial \Omega} v \, \mathrm{d}s\Big)}{c_u^3}\\
&= \frac{-2\ell^3_{\min} \bar{u}\vert \partial \Omega \vert\int_{\partial \Omega} v }{\widehat{m}^{2}c_{u}^3}\Big(v-1/|\partial\Omega|\int_{\partial \Omega} v \,\mathrm{d}s\Big).
\end{align*}
We note, that for both derivatives, the integral mean vanishes.
Then, a expansion of the boundary term in $\varepsilon$ yields
\begin{align*}
\int_{\partial \Omega} \frac{(u+ \varepsilon v)^2}{\ell_{\min} + h_{u+\varepsilon v}} \, \mathrm{d}s =&
\int_{\partial \Omega} \frac{u^2}{f(0)}\mathrm{d}s + \varepsilon \int_{\partial \Omega} \frac{u(2vf(0) -u f^\prime(0))}{f(0)^2}\mathrm{d}s \\ &+ \varepsilon^2 \int_{\partial \Omega} \left(u^2 \frac{f^\prime(0)^2}{f(0)^3} - \frac{f(0)f^{\prime \prime}(0)}{2f(0)^3}-\frac{2uvf(0) f^\prime(0)}{f(0)^3} + \frac{v^2f(0)^2}{f(0)^3} \right)+ o(\varepsilon^3).
\end{align*}
The radial symmetry of $u$, such that both $u$ and $h_u$ are constant on $\partial \Omega$, imply that several terms vanish, since the integral mean of $f^\prime$ and $f^{\prime \prime}$ vanishes
\begin{align*}
\int_{\partial \Omega} \frac{(u+ \varepsilon v)^2}{\ell_{\min} + h_{u+\varepsilon v}} \, \mathrm{d}s =& \int_{\partial \Omega} \frac{u^2}{\ell_{\min}+h_u} \mathrm{d}s + \varepsilon \int_{\partial \Omega} \frac{2uv}{\ell_{\min}+h_{u}}\mathrm{d}s + \varepsilon^2 \int_{\partial \Omega} \frac{v^2}{\ell_{\min}+h_u} \mathrm{d}s \\ &+ \varepsilon^2\int_{\partial \Omega} \frac{uf^\prime(0)}{f(0)^2}\left( \frac{uf^\prime(0)}{f(0)}- 2v\right)+o(\varepsilon^3).
\end{align*}
For the last term we have that
\begin{align*}
\int_{\partial\Omega}\frac{uf^\prime(0)}{f(0)^2}\left(\frac{uf^\prime(0)}{f(0)}- 2v\right)\mathrm{d}s=
&\int_{\partial \Omega} \frac{u}{(\ell_{\min} + h_u)^2}\frac{\ell^2_{\min} u \vert \partial \Omega \vert}{\widehat{m}c_{u}^2}\left(v - 1/\vert \partial \Omega \vert \int_{\partial \Omega} v \,\mathrm{d}s\right)\\
&\left( \frac{\ell^2_{\min} u^2 \vert \partial \Omega \vert(v - 1/\vert \partial \Omega \vert \int_{\partial \Omega} v \,\mathrm{d}s)}{\widehat{m}c_{u}^2(\ell_{\min}+h_u)}- 2v\right)\mathrm{d}s\\
=&\int_{\partial \Omega} \frac{u}{(\widehat{m}/\vert \partial \Omega \vert)^2}\frac{\ell^2_{\min} u \vert \partial \Omega \vert}{\widehat{m}(\ell_{\min} \vert \partial \Omega \vert u \widehat{m}^{-1})^2}\left(v - 1/\vert \partial \Omega \vert \int_{\partial \Omega} v \,\mathrm{d}s\right)\\
&\left( \frac{\ell^2_{\min} u^2 \vert \partial \Omega \vert(v - 1/\vert \partial \Omega \vert \int_{\partial \Omega} v \,\mathrm{d}s)}{\widehat{m}(\ell_{\min} \vert \partial \Omega \vert u \widehat{m}^{-1})^2(\widehat{m}/\vert\partial \Omega \vert)}- 2v\right)\mathrm{d}s\\
=& \int_{\partial \Omega} \frac{\vert \partial \Omega \vert}{\widehat{m}}\left(v - 1/\vert \partial \Omega \vert \int_{\partial \Omega} v \,\mathrm{d}s\right)\left(- 1/\vert \partial \Omega \vert \int_{\partial \Omega} v \,\mathrm{d}s- v\right) \mathrm{d}s\\
=&\frac{\vert \partial \Omega \vert}{\widehat{m}} \int_{\partial \Omega} \left(\left( 1/\vert \partial \Omega \vert \int_{\partial \Omega} v \,\mathrm{d}s\right)^2 -v^2\right)\mathrm{d}s\\
=&\frac{\vert \partial \Omega \vert}{\widehat{m}} \left( 1/\vert \partial \Omega \vert \left(\int_{\partial \Omega} v \mathrm{d}s\right)^2 - \int_{\partial \Omega} v^2 \mathrm{d}s \right). 
\end{align*}
With $\ell_{\min} + h_u = \widehat{m} / \vert \partial \Omega \vert$, this yields that the boundary term expands to
\begin{align*}
\int_{\partial \Omega} \frac{(u+ \varepsilon v)^2}{\ell_{\min} + h_{u+\varepsilon v}} \, \mathrm{d}s =& \frac{\vert \partial \Omega \vert}{\widehat{m}} \left( \int_{\partial \Omega} u^2 \mathrm{d}s + \varepsilon \int_{\partial \Omega}2uv \mathrm{d}s \right) + \varepsilon^2 \frac{1}{\widehat{m}} \left(\int_{\partial \Omega} v \mathrm{d}s\right)^2
 +o(\varepsilon^3).
\end{align*}
\item We now consider the function
\begin{equation*}
F(\varepsilon) = \frac{\int_\Omega \vert \nabla (u+\varepsilon v) \vert^2 \, \mathrm{d}x+\int_{\partial \Omega} \frac{(u+\varepsilon v)^2}{\ell_{\min} + h_{u+\varepsilon v}} \, \mathrm{d}s}{\int_{\Omega} (u+\varepsilon v) ^2 \, \mathrm{d}s}.
\end{equation*}
Expanding with respect to $\varepsilon$ and using the assumption that $\int_{\Omega} u^2 \mathrm{d}x = 1$ yields
\begin{align*}
F(\varepsilon) =& \left(\int_\Omega \vert \nabla u +\varepsilon v\vert^2\, \mathrm{d}x+\int_{\partial \Omega} \frac{(u+\varepsilon v)^2}{\ell_{\min} + h_{u+\varepsilon v}} \, \mathrm{d}s\right)\\ &\left(1- \varepsilon \int_{\Omega} 2uv \,\mathrm{d}x + \varepsilon^2 \left(\left(\int_{\Omega} 2uv \mathrm{d}x\right)^2 - \int_{\Omega} v^2 \mathrm{d}x\right) + o(\varepsilon^3) \right).
\end{align*}
We combine this with the expanded boundary term and \eqref{robinev} and get
\begin{align*}
F(\varepsilon)&=F(0) + 2\varepsilon \left(\int_{\Omega} \nabla u \cdot \nabla v \mathrm{d}x + \frac{\vert \partial \Omega \vert}{\widehat{m}}\int_{\partial \Omega}uv \mathrm{d}s -F(0)\int_{\Omega} uv \mathrm{d}x \right)\\
&+\varepsilon^2 \left( \int_{\Omega} \vert \nabla v \vert^2 \mathrm{d}x + \frac{1}{\widehat{m}}\left(\int_{\partial \Omega} v\mathrm{d}x\right)^2-F(0)\int_{\Omega}v^2\mathrm{d}x\right)\\
&- \varepsilon^2 \left(\left(\int_{\Omega} 2 \nabla u \nabla v \mathrm{d}x + \frac{\vert \partial \Omega \vert}{\widehat{m}} \int_{\partial \Omega} 2uv \mathrm{d}s \right)\int_{\Omega}2uv\mathrm{d}s - F(0)\left(\int_{\Omega} 2uv\,\mathrm{d}x\right)^2 \right) + o(\varepsilon^3).
\end{align*}
\item Since $F(0)$ is the first Robin eigenvalue, and $u$ the corresponding eigenfunction, the $\varepsilon$-term vanishes for every $v \in H^1(\Omega)$. This implies that 
\begin{equation*}
\frac{d}{d\varepsilon}F(\varepsilon)\vert_{\varepsilon = 0} = 0
\end{equation*}
and therefore that the eigenfunction is a critical point for any suitable test function $v$.
\item For the second variation, we consider the $\varepsilon^2$-term. 

Using again the variational eigenvalue equation we have that
\begin{equation*}
\left(\int_\Omega 2\nabla u\nabla v\,\mathrm{d}x + \frac{|\partial\Omega|}{\widehat{m}} \int_{\partial\Omega} 2uv\,\mathrm{d}s \right)\int_\Omega 2uv\,\mathrm{d}s=F(0)\left(\int_\Omega 2uv\,\mathrm{d}x\right)^2.
\end{equation*}
We choose now $v \in H^1(\Omega)$ the eigenfunction to the eigenvalue problem in optimal insulation \eqref{eq:rayleigh_oi} such that $\Vert v \Vert_{L^2(\Omega)} = 1$ and which can be chosen to be non-negative and is therefore bounded from below and satisfies the assumption to ensure the differentiablity of $h_{u+\varepsilon v}$ with respect to $\varepsilon >0$ small enough. Since $v$ is a minimizer to
\begin{align*}
\lambda_{\widehat{m}}(\Omega)=\min_{0\ne u\in H^1(\Omega) }\frac{\int_\Omega |\nabla v|^2 \mathrm{d}x + \frac{1}{\widehat{m}}\left(\int_{\partial \Omega} |v|\,\mathrm{d}x\right)^2}{\int_\Omega v^2\mathrm{d}x}
\end{align*}
for which no radial minimizer exists by assumption on $\widehat{m}< m_0$, we have that
\begin{align*}
 \int_{\Omega} \vert \nabla v \vert^2 \mathrm{d}x + \frac{1}{\widehat{m}}\left(\int_{\partial \Omega} \vert v\vert \mathrm{d}x\right)^2 = \lambda_{\widehat{m}}(\Omega) < F(0).
\end{align*} 
This implies that for $v$ the eigenfunction to the classical eigenvalue problem \eqref{eq:rayleigh_oi}, the second derivative of the function $F(\varepsilon)$ satisfies
\begin{equation*}
\frac{d^2}{d\varepsilon^2}F(\varepsilon)\vert_{\varepsilon = 0}<0
\end{equation*}
such that the radial solution $u$ becomes a local maximizer when perturbed in the direction of the eigenfunction $v$ of the eigenvalue problem \eqref{eq:rayleigh_oi}. Since $v$ is not radial symmetric for $\widehat{m}<m_0$ this implies in particular that no radial symmetric minimizer can exist.
\end{enumerate}
\end{proof}

\subsection{Existence of optimal domains}

We now consider the optimization of the eigenvalue $\lambda_{m,\ell_{\min}}(\Omega)$, defined by \eqref{rein:prop:eigval}, within the class of convex bounded sets of prescribed volume in two or three dimensions, i.e. within
where
$$\mathcal{C}_V(Q)=\left\lbrace\Omega\subset Q\ :\ \Omega\text{ is convex and open in } \mathbb{R}^d \text{ and }|\Omega|=V\right\rbrace$$
being $V>0$ and $Q\subset\mathbb{R}^d$ a convex and compact domain for $d = 2,3$.\\
We are interested in the shape optimization of the eigenvalue where a lower bound $\ell_{\min}>0$ is imposed on distribution of insulating material, but the overall mass $\widehat{m}$ of insulating material is fixed:
$$\min\Big\{\lambda_\ell(\Omega)\ :\ \Omega\in\mathcal{C}_V(Q),\ \ell\in\mathcal{H}_{\widehat{m},\ell_{\min}}(\partial\Omega)\Big\}.$$
We consider instead an equivalent problem and minimize the eigenvalue $\widehat{\lambda}_{\widehat{m},\ell_{\min}}(\Omega)= \lambda_{m_\Omega,\ell_{\min}}(\Omega)$ where the remaining free mass $m_\Omega = \widehat{m}-\ell_{\min}|\partial \Omega|$ depends on the perimeter $|\partial\Omega|$:
\begin{align*} \label{rein:problem:m_ellmin}
\min\Big\{\widehat{\lambda}_{\widehat{m},\ell_{\min}}(\Omega)\ :\ \Omega\in\mathcal{C}_V(Q),\ |\partial\Omega|\le \widehat{m}/\ell_{\min}\Big\}.\tag{$\mathbf{\widehat{P}_{\widehat{m},\ell_{\min}}}$}
\end{align*}
The isoparamteric inequality implies an upper bound for $\ell_{\min}$ such that the perimeter constraint can be satisfied, i.e. $\ell_{\min}\le\widehat{m}/|\partial B_R|$, for a ball $B_R$ with radius $R>0$ such that the ball has volume $V$.
The case $\ell_{\min} = 0$ corresponds to the regular eigenvalue of optimal insulation \cite{keller}. Then, the existence of optimal domains in the cases where $\ell_{\min} = 0$ or $\ell_{\min} = \widehat{m} / \vert \partial B_R \vert$ follows from \cite{keller} and the isoperimeteric inequality. For $\ell_{\min} \in (0, \widehat{m} / \vert \partial B_R \vert)$ the following existence result can be deduced.

\begin{proposition} \label{rein:theorem:so_existence}
There exists an optimal domain $ \Omega$ for \eqref{rein:problem:m_ellmin} if $0 \le \ell_{\min} \le \widehat{m} / \vert \partial B_R \vert$ for a ball $B_R$ with radius $R>0$ such that the ball has volume $V$.
\end{proposition}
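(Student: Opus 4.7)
The plan is to apply the direct method of the calculus of variations, adapting the proof of Proposition \ref{thm:oi_ex} from \cite{keller} to account for the nonlinear boundary term and the additional perimeter constraint. The boundary cases $\ell_{\min}=0$ and $\ell_{\min}=\widehat{m}/|\partial B_R|$ have already been reduced to \cite{keller} and the isoperimetric inequality (in the latter case the ball $B_R$ is the only admissible domain), so I would concentrate on $\ell_{\min}\in(0,\widehat{m}/|\partial B_R|)$.

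First I would fix a minimizing sequence $\Omega_n\in\mathcal{C}_V(Q)$ with $|\partial\Omega_n|\le\widehat{m}/\ell_{\min}$, together with associated eigenfunctions $u_n\in H^1(\Omega_n)$, $\|u_n\|_{L^2(\Omega_n)}=1$, and optimal distributions $h_n:=h_{u_n}\in\mathcal{H}_{m_{\Omega_n}}(\partial\Omega_n)$ provided by Theorem \ref{thm:lower_bound_existence}. The Blaschke selection theorem, applied inside the compact convex set $Q$, yields a subsequence (not relabeled) such that $\Omega_n\to\Omega$ in the Hausdorff distance. Since Lebesgue measure and perimeter are continuous along Hausdorff-convergent sequences of convex bodies, the limit $\Omega$ is convex with $|\Omega|=V$ (hence of non-empty interior) and $|\partial\Omega|\le\widehat{m}/\ell_{\min}$, so $\Omega$ is admissible.

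Next I would pass to the limit in the eigenfunctions. The uniform trace inequality on convex domains contained in $Q$ from \cite{BG97}, together with the bound $J_{\ell_{\min}}(u_n,h_n)\le C$, provides a uniform $H^1$-bound on the $u_n$. Extending the $u_n$ across $\partial\Omega_n$ as in \cite{keller} and invoking the $SBV$ compactness results of \cite{BG,BG10}, a subsequence converges strongly in $L^2(Q)$ and, in the sense adapted to the Hausdorff convergence of the boundaries, in $L^2(\partial\Omega_n)$ to a limit $u\in H^1(\Omega)$ with $\|u\|_{L^2(\Omega)}=1$. The gradient term passes to the limit by weak lower semicontinuity of the Dirichlet energy.

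The main obstacle is passing to the limit in the nonlinear boundary term. For this I would use the reformulation from Remark \ref{rein:remark:optimal_dis_lb},
\begin{equation*}
\int_{\partial\Omega_n}\frac{u_n^2}{\ell_{\min}+h_n}\,\mathrm{d}s=\int_{\partial\Omega_n}G_{c_{u_n}}(u_n)\,\mathrm{d}s,
\end{equation*}
and the two stability estimates contained in that remark: the Lipschitz dependence of $G_c$ on $c$ and on its argument, together with the bound $0\le c_{u_n}\le(\ell_{\min}/m_{\Omega_n})\|u_n\|_{L^1(\partial\Omega_n)}$ and the continuous dependence $c_v$ on $v$ in $L^2(\partial\Omega)$. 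Combined with the trace convergence, this allows to pass to the limit in the boundary term, giving
\begin{equation*}
\widehat{\lambda}_{\widehat{m},\ell_{\min}}(\Omega)\le J_{\ell_{\min}}(u,h_u)\le\liminf_{n\to\infty}J_{\ell_{\min}}(u_n,h_n)=\inf_{\mathcal{C}_V(Q)}\widehat{\lambda}_{\widehat{m},\ell_{\min}},
\end{equation*}
where the first inequality comes from the optimality of $h_u$ for $u$ guaranteed by Proposition \ref{lemma:lower_bound_optimal_distribution}. The truly delicate step is the rigorous handling of the traces on the moving boundaries $\partial\Omega_n$; this is the point at which the $SBV$/convex-compactness framework of \cite{keller,keller2_preprint} is essential, and also the reason why the convexity constraint cannot be dropped.
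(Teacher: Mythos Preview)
Your overall architecture---direct method, compactness of convex domains, trivial extension of the $u_n$ to $SBV(Q)$ and $SBV$-compactness, lower semicontinuity of the Dirichlet energy---matches the paper's proof. The use of Blaschke selection and Hausdorff convergence instead of the convergence in variation of \cite{BG97} is harmless for convex bodies, since both give continuity of volume and perimeter.

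The substantive divergence, and the gap, is in your treatment of the boundary term. You propose to pass to the limit in $\int_{\partial\Omega_n}G_{c_{u_n}}(u_n)\,\mathrm{d}s$ by combining the Lipschitz dependence of $G_c$ on $c$ and on its argument with a claimed ``trace convergence in $L^2(\partial\Omega_n)$''. Two problems arise. First, you never extract a convergent subsequence of the constants $c_{u_n}$ nor treat the degenerate case $\liminf c_{u_n}=0$; the Lipschitz estimate in Remark~\ref{rein:remark:optimal_dis_lb}(3) carries a factor $c^{-1}$ and is useless in that regime. The paper handles this case separately, showing that it forces $u|_{\partial\Omega}=0$. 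Second, even when $c_{u_n}\to c>0$, the $SBV$ compactness from \cite{BG,BG10} does \emph{not} give strong $L^2$ convergence of traces on the moving boundaries---only lower semicontinuity of jump energies---so the Lipschitz argument cannot be completed as stated. Note also that $G_c$ is not convex (the one-sided derivatives at $|x|=c$ are $2\beta c$ and $\beta c$), so a direct lower semicontinuity argument for $\int G_c(u_n)$ is not available either.

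The paper's device is different: it extends both $h_{u_n}$ and the quotient $v_n=u_n^2/(\ell_{\min}+h_{u_n})=G_{c_n}(u_n)$ to $SBV(Q)$ via the chain rule, obtains uniform $SBV$ bounds (here the assumption $c>0$ is used), identifies the weak $SBV$ limit of $v_n$ as $u^2/(\ell_{\min}+\ell)$ using the weak $L^2$-limit $\ell$ of the extended $h_{u_n}$ and the embedding $H^1\hookrightarrow L^4$, and only then applies lower semicontinuity of the jump part of $v_n$. A final monotonicity argument in $m$ closes the gap between $\int_{\partial\Omega}\ell\,\mathrm{d}s\le m$ and the optimal $h_u\in\mathcal{H}_m(\partial\Omega)$. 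Your sketch would become correct if you replaced the Lipschitz-plus-trace-convergence step by this $SBV$ extension of the boundary integrand.
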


\begin{proof}
Let us proceed step by step.
\begin{enumerate}
\item If $\ell_{\min} = 0$, existence follows from \cite{keller}. If $\ell_{\min} = \widehat{m} / \vert \partial B_R \vert$, the perimeter constraint and isoperimetric inequality, \cite[Equation (1.3)]{veodf}, imply that the only admissible domains is the ball, and the eigenvalue corresponds to the Robin eigenvalue $\lambda_1(B_R, \ell_{\min}^{-1}).$
\item If $\ell_{\min} \in (0,\widehat{m} / \vert \partial B_R \vert)$, the admissible class of domains is non-empty, and with Theorem \ref{thm:lower_bound_existence}, we can choose a minimizing sequence $(\Omega_n)_{n \in \mathbb{N}}$ to \eqref{rein:problem:m_ellmin}, with $\Omega_n \in \mathcal{C}_V(Q)$ and corresponding eigenfunctions $u_n \in H^1(\Omega_n)$ and distributions of insulating material $\ell_n \in \mathcal{H}_{m_n}(\partial \Omega_n)$ for $m_n = \widehat{m} - \ell_{\min} \vert \partial \Omega_n\vert$. Without loss of generality we may assume that $u_n$ is non-negative, and that the distribution $\ell_n$ is optimal, i.e.
\begin{equation*}
\ell_n = h_{u_n}: = \frac{\ell_{\min}}{c_n}\max(u-c_n,0)
\end{equation*}
where $c_n$ is the unique constant satisfying 
\begin{equation*}
m_n c_n\ell_{\min}^{-1} = \int_{ \{|u_n|\ge c_n \} \cap \partial\Omega_n } (|u_n|-c_n) \,\mathrm{d}s
\end{equation*}
as defined in Proposition \ref{lemma:lower_bound_optimal_distribution}.
Using a compactness results for convex domains, \cite[Lemma 3.1]{BG97}, there exists a convex domain $\Omega \in \mathcal{C}_V(Q)$ such that after passing to a subsequence, the characteristic functions $\chi_{\Omega_n}$ converge in variation to $\chi_{\Omega}$, which corresponds in this setting to the usual intermediate convergence for functions of bounded variation, \cite{BG97} and \cite[Proposition 3.6]{Ambrosio}. In particular this implies that $\chi_{\Omega_n}\to\chi_{\Omega}$ strongly in $L^1(Q)$ and $\vert \partial \Omega \vert = \lim_{n\to\infty} \vert \partial \Omega_n\vert$, such that 
\begin{equation}\label{eq:conv_m}
\lim_{n\to\infty} m_n = \widehat{m} - \ell_{\min} \vert \partial \Omega \vert =:m,
\end{equation}
and $\Omega$ also satisfies the perimeter constraint $\vert \partial \Omega \vert \le \widehat{m}/\ell_{\min}.$ 
\item After trivially extending the functions $u_n$ to $ Q$, the sequence of extended functions $ \widetilde{u}_n$ is bounded in $SBV(Q)$, the special functions of bounded variation. The bounds on the objective functional imply
\begin{align*}
\Vert \widetilde{u}_n \Vert_{L^2(Q) }&=1, \quad
\Vert \nabla \widetilde{u}_n \Vert_{L^2(Q, \mathbb{R}^d)} \le C.
\end{align*}
Here, $\nabla \widetilde{u}_n$ refers to the piecewise weak gradient with
$$\nabla\widetilde{u}_n\vert_{\Omega_n} = \nabla u_n\qquad\text{and}\qquad\nabla \widetilde{u}_n \vert_{\widehat{Q} \backslash \Omega_n} = 0.$$
Using \cite[Theorem 2.6]{BG97}, the weak differentiabilty of $\widetilde{u}_n$ on $\Omega_n$ and $Q \backslash \overline{\Omega}_n$ and the uniform bound on the trace operator in the class $\mathcal{C}_V(Q)$, see \cite{keller,payne}, we derive 
\begin{equation} \label{eq:bound:bdy_term}
\int_{J_{\widetilde{u}_n}} u_n^2 d \mathcal{H}^{d-1} = \int_{\partial \Omega_n} u_n^2 \,\mathrm{d}s \le C \Vert u_n \Vert_{L^2(\partial \Omega_n)} \le C \Vert \widetilde{u}_n \Vert_{H^1(\Omega_n)} \le C
\end{equation}
from which we infer the boundedness of the sequence $(u_n)_{n \in \mathbb{N}}$ and in particular also of $(u_n^2)_{n \in \mathbb{N}}$ in $SBV(Q)$.

\item With the compactness results for special functions of bounded variation, \cite[Theorem 2.1]{BG} and \cite[Theorem 2]{BG10}, there exists a function $\widetilde{u} \in SBV(Q)$, such that after passing to a subsequence, $\widetilde{u}_n$ converges weakly in $SBV(Q)$ to $\widetilde{u}$, see, in particular 
\begin{alignat}{3}
D\widetilde{u}_n&\rightharpoonup^\star D\widetilde{u} \quad &&\text{ in the sense of measures}\label{eq:oi_conv_meas}\\
\widetilde{u}_n&\to\widetilde{u}\quad&&\text{ strongly in } L^2(Q)\label{eq:oi_l1_conv}\\
\nabla \widetilde{u}_n&\rightharpoonup \nabla \widetilde{u}\quad&&\text{ weakly in } L^2(Q, \mathbb{R}^d). \label{eq:oi_conv_grad}
\end{alignat}

Using these convergence results, we deduce as in \cite[Proposition 1]{keller} that $u\vert_{\Omega} \in H^1(\Omega)$ and $u\vert_{Q \backslash \Omega} = 0$.

With the lower semicontinuity of jump energies \cite[Theorem 2.12]{braides}, it follows that
\begin{equation} \label{rein:eq:lsc_ubdy}
\int_{\partial \Omega} \vert u \vert \,\mathrm{d}s \le \int_{\partial \Omega_n} \vert u_n \vert \,\mathrm{d}s.
\end{equation}
 
In summary, we have that $u \in H^1(\Omega)$ with
$$\|\nabla u\|^2_{L^2(\Omega)} \le \liminf_{n\to\infty}\|\nabla u_n\|^2_{L^2(\Omega_n)}$$
and $\Vert u \Vert_{L^2(\Omega)} = 1$. Left to show is the lower semicontinuity of the boundary term.
\item Using that $c_u$ satsfies \eqref{eq:c_u} yields
$$0\le c_n\le C\int_{\partial \Omega_n} u_n \,\mathrm{d}s \le C\Vert u_n \Vert_{H^1(\Omega_n)}$$
and implying that the sequence $(c_n)_{n\in\mathbb{N}}$ is bounded.\\
We first consider the case $\liminf_{n\to\infty} c_n=0$. After passing to a subsequence we have that 
\begin{align*}
0 & = \liminf_{n\to\infty} c_n=\lim_{k\to\infty} c_{n_k} \\ &= \lim_{k\to\infty} (\vert \{u_{n_k} \ge c_{n_k}\} \cap \partial \Omega_{n_k} \vert + m_{n_k}\ell_{\min}^{-1})^{-1}\int_{\partial \Omega_n \cap \{ u_{n_k} \ge c_{n_k}\} } |u_{n_k}|\,\mathrm{d}s.
\end{align*}
Using the estimate $|\partial\Omega_{n_k}|\le|\partial Q|$, since $\Omega_{n_k}\subset Q$, and \eqref{eq:conv_m} yields
\begin{align*}
0 &\ge (\vert \partial Q \vert + m \ell_{\min}^{-1} )^{-1} \lim_{k\to\infty} \int_{\partial \Omega_{n_k} \cap \{ u_{n_k}\ge c_{n_k}\} }|u_{n_k}|\,\mathrm{d}s \\
& \ge(\vert \partial Q \vert + m \ell_{\min}^{-1} )^{-1} \lim _{k\to\infty}\left( \int_{\partial \Omega_{n_k} }\vert u_{n_k} \vert \,\mathrm{d}s - \int_{\partial \Omega_{n_k}\cap \{ u_{n_k} < c_{n_k}\}}c_{n_k}\,\mathrm{d}s \right).
\end{align*}
The assumed convergence of the subsequence $\lim c_{n_k}= 0$ and the lower semicontinuity of the boundary term \eqref{rein:eq:lsc_ubdy} yields
\begin{equation*}
 \int_{\partial \Omega} \vert u \vert \,\mathrm{d}s \le 0.
\end{equation*}
 This implies that $u\vert_{\partial \Omega} = 0$ almost everywhere on $\partial \Omega$, such that for any $\ell \in \mathcal{H}_m(\partial \Omega)$
\begin{equation*}
\int_{\partial \Omega} \frac{u^2}{\ell_{\min} + \ell} \,\mathrm{d}s= 0 \le \liminf_{n\to\infty} \int_{\partial \Omega_n} \frac{u_n^2}{\ell_{\min} + \ell_n} \,\mathrm{d}s 
\end{equation*}
implying the optimality of the domain $\Omega$.
\item In the following we consider the case $\liminf c_n>0$. We pass to a convergent subsequence and set $c = \lim {c_n}$. We extend the function
$$h_{u_n}=\frac{\ell_{\min}}{c_n}\max(u_n -c_n,0) \in L^2(\partial \Omega)$$
to $Q$ by considering
\begin{equation*}
\widetilde{\ell}_n = \begin{cases}
(\ell_{\min}/c_n) \max(u_n -c_n,0)&\text{in }\Omega_n\\
0&\text{in } Q\backslash \Omega_n.
\end{cases}
\end{equation*}
Then $\widetilde{\ell}_n \vert_{\Omega_n} \in H^1(\Omega_n)$, \cite[5.20]{dobrowolski}, and $\widetilde{\ell}_n \in SBV(Q)$ with $\widetilde{\ell}_n \vert_{\partial \Omega_n} = h_{u_n}$ on $\partial \Omega_n$ using the chain rule for $BV$ functions \cite[Theorem 3.99]{Ambrosio}. The bound on $\widetilde{u}_n\vert_{\Omega_n}$ in $H^1(\Omega_n)$ and the fact that $\lim_n c_n=c>0$ yield that the sequence $(\widetilde{\ell}_n)_{n \in \mathbb{N}}$ is bounded in $SBV(Q)$, using again the uniform bound of the trace operator to bound the jump terms. \\
The compactness results in $SBV(Q)$ imply the existence of a function $\widetilde{\ell} \in SBV(Q)$, such that $\widetilde{\ell}_n$ converge weakly to $\widetilde{\ell}$ in $SBV(Q)$ and weakly in $L^2(Q)$ after passing to a subsequence. With the same arguments as before we conclude that $\widetilde{\ell}\vert_{\Omega}$ in $H^1(\Omega)$ and $\widetilde{\ell}\vert_{Q\backslash \overline{\Omega}} = 0$ and for the trace $\ell = \widetilde{\ell} \vert_{\partial \Omega} $ on $\partial \Omega$ that 
\begin{equation*}
\int_{\partial \Omega} \ell \,\mathrm{d}s \le \liminf_{n\to\infty} \int_{\partial \Omega_n} \ell_n \,\mathrm{d}s = m.
\end{equation*}
\item Next, we show that
\begin{equation}\label{eq:lsc_bdy_lboi}
\int_{\partial \Omega} \frac{u^2}{\ell_{\min} + \ell}\, \mathrm{d}s \le \int_{\partial \Omega_n} \frac{u_n^2}{\ell_{\min} + \ell_n}\, \mathrm{d}s.
\end{equation}
To achieve this, we consider the functions

\begin{equation*}
\widetilde{v}_n:= \begin{cases}
\displaystyle\frac{u_n^2}{\ell_{\min}+h_{u_n}} & \text{ in } \Omega_n \\ 0 & \text{ in } Q\backslash \Omega_n. \end{cases}
\end{equation*}
Using that $\widetilde{u}_n = 0 $ in $Q\backslash \overline{\Omega}$, and the explicit formula of $h_{u_n}$, we have $\widetilde{v}_n = G_{c_n}(\widetilde{u}_n)$ with
\begin{align*}
G_{c_n}(x) = \begin{cases} \ell_{\min}^{-1} x^2 & \text{ if }\vert x \vert < c_n\\
c_n \ell_{\min}^{-1} \vert x \vert & \text{ if } \vert x \vert \ge c_n \end{cases}
\end{align*}
 which are Lipschitz continuous functions. The chain rule for functions in $BV$ then implies that $\widetilde{v}_n$ in $SBV(Q)$, with
\begin{align*}
D\widetilde{v}_n = &\chi_{\Omega_n}\left(\chi_{\{\widetilde{u}_n < c_n\}} \ell_{\min} 2 \widetilde{u}_n \nabla \widetilde{u}_n + \chi_{\{\widetilde{u}_n \ge c_n\}} c_n\beta \nabla \widetilde{u}_n\right)\otimes \mathrm{d}x\\
&+\frac{\widetilde{u}_n^2}{\ell_{\min}+h_{u_n,c_n}}\nu \otimes \mathrm{d}\mathcal{H}^{d-1}\lfloor \, \partial \Omega_n.
\end{align*}
Combined with the bounds on $\Vert u_n \Vert_{H^1(\Omega_n)}$ and $c_n$, this implies that $\widetilde{v}_n\vert_\Omega$ is in $H^1(\Omega_n)$. The sequence $\widetilde{v}_n$ is bounded in $SBV(Q)$, and admits a weakly convergent subsequence $\widetilde{v}_n \rightharpoonup \widetilde{v} $ in $SBV(Q)$. With similar arguments as before, we conclude that $\widetilde{v} \vert_{\Omega} \in H^1(\Omega)$. \\
Next, we show that 
\begin{equation}\label{oi:eq:v}
\widetilde{v} = \frac{u^2}{\ell_{\min}+\ell}.
\end{equation}
Using $\ell_n \rightharpoonup \ell$ weakly in $L^2(Q)$, $\widetilde{u}_n\to\widetilde{u}$ strongly in $L^2(Q)$, as well as the Sobolev embedding $H^1(\Omega) \hookrightarrow L^4(\Omega)$, we infer that $\widetilde{v}_n \rightharpoonup \widetilde{v}$ in $L^2(\Omega)$.
The uniqueness of the limit, in particular \cite[Equation (3.11)]{Ambrosio}, implies \eqref{oi:eq:v} and the lower semicontinuity of jump terms implies the assertion \eqref{eq:lsc_bdy_lboi}.
\item For $0 < m_1 \le m_2$, and $\ell_1 \in \mathcal{H}_{m_1}(\partial \Omega)$, there exists $\ell_2 \in \mathcal{H}_{m_2}(\partial \Omega)$, such that for any $u \in H^1(\Omega)$
\begin{equation*}
\int_{\partial \Omega} \frac{u^2}{\ell_{\min} + \ell_2}\,\mathrm{d}s \le \int_{\partial \Omega} \frac{u^2}{\ell_{\min} + \ell_1}\,\mathrm{d}s.
\end{equation*}
Indeed, if we take $\ell_2 = (m_2/m_1) \ell_1$, then $\ell_2 - \ell_1 \ge 0$ on $\partial \Omega$, which implies the assertion. As a consequence, for any $u \in H^1(\Omega)$, the optimal distribution $h_u$ from Proposition \ref{lemma:lower_bound_optimal_distribution} is optimal also among all distributions $\ell \in L^2(\partial \Omega)$ with $\ell \ge 0$ and $\int_{\partial \Omega} \ell \, \mathrm{d}s \le m$. With \eqref{eq:lsc_bdy_lboi}, this implies 
\begin{equation*}
\int_{\partial \Omega} \frac{u^2}{\ell_{\min} + h_{u}} \,\mathrm{d}s\le \int_{\partial \Omega} \frac{u^2}{\ell_{\min}+ \ell} \,\mathrm{d}s\le \liminf_{n\to\infty} \int_{\partial \Omega_n} \frac{u^2_n}{\ell_{\min}+ \ell_n}\,\mathrm{d}s
\end{equation*}
and the optimality of the limit $\Omega$ to the problem \eqref{rein:problem:m_ellmin}.
\end{enumerate}
\end{proof}
The proof uses the embedding of $H^1(\Omega) \hookrightarrow L^4(\Omega)$ which holds only if $d \le 4$. In 
the absence of a convexity constraint non-existence of an optimal domain can be proven similar to the eigenvalue in optimal insulation \cite{BB19}.

\begin{remark}[Non-existence]\rm
We choose a domain $\Omega$, which is the disjoint union of two balls with radii $0<r_{1} < r_{2}$ chosen such that the volume constraint on the domains $\Omega$ is satisfied, for simplicity we take that $\vert \Omega\vert = \vert B_1(0) \vert$. One can verify that for any $\ell_{\min}>0$ the perimeter constraint is satisfied for $\Omega $ provided that $r_1$ is small enough. \\
Using the explicit formula \eqref{rein:optimal_h_u}, we compute 
\begin{align*}
\widehat{\lambda}_{\widehat{m},\ell_{\min}}(\Omega) &= \inf_{u \in H^1(\Omega)} \left\lbrace \int_{\Omega} \vert \nabla u \vert^2 \,\mathrm{d}x + \ell_{\min}^{-1}\int_{\partial \Omega \cap \{ \vert u \vert < c_u\} } u^2\,\mathrm{d}s + \frac{c_u}{\ell_{\min}} \int_{\partial \Omega \cap \{ \vert u \vert \ge c_u\} } \vert u \vert \,\mathrm{d}s \right\rbrace \\ &= \inf_{u \in H^1(\Omega)} \left\lbrace \int_{\Omega} \vert \nabla u \vert^2 \,\mathrm{d}x + \ell_{\min}^{-1}\int_{ \{ \vert u \vert < c_u\} } u^2\,\mathrm{d}s + \frac{1}{\ell_{\min} \vert \{ \vert u \vert \ge c_u \} \vert +m}\left(\int_{ \{ \vert u \vert \ge c_u\} } \vert u \vert \,\mathrm{d}s\right)^2 \right\rbrace.
\end{align*}
with the constant $c_u$ given by \eqref{rein:eq:c_u}.
Now, choosing 
$u \equiv 1$ on $B_{r_{1}}$ and $u\equiv 0$ on $B_{r_{2}}$, we compute
\begin{equation*}
c_{u} =\Big(1+ \frac{\widehat{m}}{\ell_{\min} \vert \partial B_{r_{1}} \vert }\Big)^{-1} .
\end{equation*}
The value $c_u \in (0,1]$ is such that
$$\{\vert u \vert \ge c_u \} = \partial B_{r_1}\qquad\text{and}\qquad\{\vert u \vert < c_u \} = \partial B_{r_2}.$$
Normalizing the function $u$ such that $\Vert u \Vert^2_{L^2(\Omega)} = 1$ yields the estimate
\begin{align*}
\widehat{\lambda}_{\widehat{m},\ell_{\min}}(\Omega)&\le \frac{\vert \partial B_{r_{1}}\vert^2 }{\vert B_{r_{1}}\vert( \ell_{\min} \vert \vert \partial B_{r_1}\vert +\widehat{m}) } \le \frac{d^2 \vert B_1(0) \vert}{\widehat{m}+\ell_{\min} d r_1^{d-1}|B_1|}r_1^{d-2}
\end{align*}
which can be arbitrary close to zero if $d \ge 3$. 
\end{remark}

The numerical approximation of the optimal domains can be improved by accounting for the effect the change of volume has on the eigenvalue. 
For this it is useful to know the scaling properties of the eigenvalues under consideration. For a domain $\Omega \subset \mathbb{R}^d$, we consider $t\Omega :=\{tx :x \in \Omega\}$ for $t > 0$. For the eigenvalues of the Dirichlet, Neumann and Robin Laplacian the following scaling properties are known for $n \in \mathbb{N}$, \cite[p. 84]{henrot}:
\begin{align*}
\lambda_n(t\Omega) = t^{-2} \lambda_n(\Omega), \qquad \mu_n(t\Omega) = t^{-2} \mu_n(\Omega), \qquad \lambda_n(t\Omega, \alpha t^{-1} ) = t^{-2} \lambda_n(\Omega,\alpha).
\end{align*} 
For the eigenvalue with an insulating film \eqref{rein:eigval_ell} for the scaled domain $t\Omega$, for $t \in (0,\infty)$ we can verify that 
\begin{align*}
\lambda_{\ell_{t}}(t\Omega)= t^{-2} \lambda_{\ell t^{-1}}(\Omega)
\end{align*}
where the function $\ell_t \in L^2(\partial (t\Omega))$ is defined by $\ell_t(x) = \ell(x/t)$ a.e. for $x \in \partial (t\Omega)$.
If we consider the problem with a lower bound, so that $\widehat{\ell} = \ell_{\min} + \ell$, then this implies the following scaling properties
$$\widehat{m}_t=t^d\widehat{m},\qquad m_t=t^d m,\qquad\ell_{\min,t}=t\ell_{\min},\qquad\ell_{t}=t\ell.$$
For the numerical approximation of the eigenvalue in optimal insulation, we always consider the eigenvalue scaled to the volume of the unit ball, and give the scaling parameters for reference. 
To achieve this, given a domain $\Omega \in \mathcal{C}_V(Q)$ and a scaled domain $t\Omega$, we compute $t = ( \vert t\Omega \vert / \vert \Omega \vert)^{1/d}$. After adjusting the parameters $\ell_{\min,t}$,$m_t$ and $\widehat{m}_t$ accordingly, the eigenvalues are iteratively minimized as described in Section \ref{sec:iterative_min} and the rescaled eigenvalue 
\begin{equation}
\widehat{\lambda}_{\widehat{m},\ell_{\min}}(\Omega) = t^2 \widehat{\lambda}_{\widehat{m}_t,\ell_{\min,t}}(t\Omega)
\end{equation}
is evaluated.

\section{Numerical approximation}

In this section, we consider the numerical approximation of the eigenvalue in optimal insulation with a lower bound on the thickness of the insulating film. We propose an iterative scheme to minimize the objective functional, give an error estimate for the approximation of the eigenvalue with discrete convex domains and consider the stability of the shape optimization among convex domains in $\mathbb{R}^3$. \\
To approximate the convex domains in $\mathbb{R}^3$, we use the \textit{discrete convex} domains previously defined in \cite{keller2_preprint}.

\subsection{Iterative minimization}\label{sec:iterative_min}

When minimizing over the product of two functions, decoupling arises naturally as a consequence of the discrete product rule for the difference quotient
\begin{align*}
d_t(a_kb_k) & = (d_ta_k)b_k + a_{k-1}(d_tb_k) \\ \text{ for } d_ta_k &= (1/\tau)(a_k -a_{k-1})
\end{align*}
for a suitable stepsize $\tau >0$.
This allows us to minimize $J_{\ell_{\min}}$ separately with respect to $u\in H^1(\Omega)$ and $\ell \in \mathcal{H}_{m}(\partial \Omega)$.
Minimizers $u$ of $J_{\ell_{\min}}(\cdot,\ell)$ with $\ell \in \mathcal{H}_m(\partial \Omega)$ fixed satisfy the Euler-Lagrange equation
\begin{equation} \label{eq:lb:variational}
 \int_\Omega \nabla u \cdot \nabla v \, \mathrm{d}x + \int_{\partial \Omega} \frac{uv}{\ell_{\min} + \ell}\, \mathrm{d}s = \lambda_{\ell,\ell_{\min}}(\Omega) \int_{\Omega} uv \,\mathrm{d}x
\end{equation}
for all $v \in H^1(\Omega)$ and 
\begin{equation*}
\lambda_{\ell,\ell_{\min}}(\Omega): = \inf_{u \in H^1(\Omega)} \left\lbrace J_{\ell_{min}}(u,\ell): \Vert u \Vert_{L^2} = 1\right\rbrace.
\end{equation*}
No additional regularization is needed here, since $\ell_{\min} >0$.

The constraint $\Vert u \Vert_{L^2(\Omega)} = 1 $ motivates to use test functions $v \in H^1(\Omega)$ for which $(u,v)_{L^2(\Omega)} = 0$, such that the right-hand side in \eqref{eq:lb:variational} with the unknown eigenvalue disappears.
An iterative scheme can then be derived with the corresponding evolution equation as in \cite{BB19}.
Using the optimal distribution \eqref{rein:optimal_h_u} yields the following algorithm.

\begin{algorithm_env} \label{rein:algorithm:lb}
Suppose $(\cdot,\cdot)_{\star,\Omega}$ is an appropriate inner product on $H^1(\Omega)$. For the initial data $ u_0 \in H^1(\Omega), \ell_0 \in \mathcal{H}_{m}(\partial \Omega)$ and $\Vert u_0\Vert^2 = 1$, set $ k = 1$, and repeat the following steps:
\begin{enumerate}
\item Find $d_t u_k \in H^1(\Omega)$ with $(u_{k-1},d_t u_k)_{\star,\Omega}=0$ s.t. for all $v \in H^1(\Omega)$ with $(u_{k-1},v)_{\star,\Omega}=0$:
\begin{equation*}
(d_t u_k,v)_{\star,\Omega} + \int_\Omega \nabla u_k \cdot \nabla v \, \mathrm{d}x + \int_{\partial \Omega} \frac{u_kv}{\ell_{\min} +\ell_{k-1}}\,\mathrm{d}s= 0.
\end{equation*}
\item Compute the optimal distribution $\ell_k = h_{u_k}\in \mathcal{H}_{m}(\partial \Omega)$ according to \eqref{rein:optimal_h_u}.
\item Stop if $\Vert d_tu_k\Vert_{\star,\Omega}\le \varepsilon_{\mathrm{stop}}$; otherwise increase $k$ to $k+1$ and continue with $(1)$.
\end{enumerate}
\end{algorithm_env}

The iterates are energy decreasing and satisfy an approximate energy estimate on finite intervals $[0, T ]$.
\begin{proposition}
Algorithm \ref{rein:algorithm:lb} is energy decreasing in the sense that for all $K = 0,1, \dots, \lfloor T/\tau\rfloor$ we have
\begin{equation*}
J_{\ell_{\min}}(u_K,\ell_K) + \tau \sum_{k = 1}^K \Vert d_t u_k \Vert_\star^2 \le J_{\ell_{\min}}(u_0,\ell_0).
\end{equation*}
If $\Vert u_0 \Vert_{L^2(\Omega)} = 1$, then $\Vert u^K \Vert_{L^2(\Omega)} \ge 1$, in particular
\begin{equation*}
\Vert u_K \Vert^2_{L^2(\Omega)} = 1+ \tau^2 \sum_{k = 1}^k \Vert d_t u_k \Vert_{L^2(\Omega)}^2.
\end{equation*}
\end{proposition}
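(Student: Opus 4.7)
The plan is to test the Euler--Lagrange equation at step $k$ against the natural choice $v = d_t u_k$, which by construction is admissible because $(u_{k-1}, d_t u_k)_{\star,\Omega} = 0$. Substituting yields
$$\|d_t u_k\|_{\star}^2 + \int_\Omega \nabla u_k \cdot \nabla(d_t u_k)\,\mathrm{d}x + \int_{\partial\Omega} \frac{u_k\,d_t u_k}{\ell_{\min}+\ell_{k-1}}\,\mathrm{d}s = 0.$$
I would then apply the elementary identity $2a(a-b) = a^2 - b^2 + (a-b)^2$ pointwise to both the gradient and the boundary integrand; since $\tau\,d_t u_k = u_k - u_{k-1}$, this converts each scalar product into a telescoping difference of squared norms plus a nonnegative remainder of order $\tau^2$. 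Multiplying the identity by $2\tau$ and dropping the nonnegative remainders gives
$$\|\nabla u_k\|_{L^2(\Omega)}^2 - \|\nabla u_{k-1}\|_{L^2(\Omega)}^2 + \int_{\partial\Omega} \frac{u_k^2 - u_{k-1}^2}{\ell_{\min}+\ell_{k-1}}\,\mathrm{d}s \le -2\tau\,\|d_t u_k\|_{\star}^2.$$

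The critical step is now to pass from $\ell_{k-1}$ to $\ell_k$ in the boundary term at index $k$ so that the full discrete energy $J_{\ell_{\min}}(u_k,\ell_k)$ appears. This is where I would invoke Proposition \ref{lemma:lower_bound_optimal_distribution}: the update $\ell_k = h_{u_k}$ is by definition the minimizer of $\ell \mapsto \int_{\partial\Omega} u_k^2/(\ell_{\min}+\ell)\,\mathrm{d}s$ over $\mathcal{H}_m(\partial\Omega)$, hence
$$\int_{\partial\Omega} \frac{u_k^2}{\ell_{\min}+\ell_k}\,\mathrm{d}s \le \int_{\partial\Omega} \frac{u_k^2}{\ell_{\min}+\ell_{k-1}}\,\mathrm{d}s.$$
Combining the two displayed inequalities produces $J_{\ell_{\min}}(u_k,\ell_k) - J_{\ell_{\min}}(u_{k-1},\ell_{k-1}) \le -2\tau\,\|d_t u_k\|_{\star}^2$, and summing over $k = 1,\dots,K$ yields the first assertion by telescoping (with the stated constant $\tau$, which is in fact weaker than the $2\tau$ obtained).

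For the $L^2$-norm identity I would choose $(\cdot,\cdot)_{\star,\Omega}$ to be the $L^2$-inner product (the natural choice for this gradient flow), so that the orthogonality condition $(u_{k-1}, d_t u_k)_{\star,\Omega} = 0$ becomes $(u_{k-1}, d_t u_k)_{L^2(\Omega)} = 0$. Expanding $u_k = u_{k-1} + \tau d_t u_k$ in $L^2$ then annihilates the cross term,
$$\|u_k\|_{L^2(\Omega)}^2 = \|u_{k-1}\|_{L^2(\Omega)}^2 + \tau^2\,\|d_t u_k\|_{L^2(\Omega)}^2,$$
and induction on $k$ starting from $\|u_0\|_{L^2(\Omega)} = 1$ gives the stated formula; the inequality $\|u_K\|_{L^2(\Omega)} \ge 1$ is then immediate.

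The main obstacle is bookkeeping rather than any hard estimate: one must carefully track that the variational equation at step $k$ involves the \emph{old} distribution $\ell_{k-1}$, while the full discrete energy evaluated at step $k$ involves $\ell_k$, and then close the gap with the correct sign by exploiting that the update $\ell_k = h_{u_k}$ is optimal for $u_k$. The remaining subtlety is to make sure the choice of inner product $(\cdot,\cdot)_{\star,\Omega}$ is compatible with both the first part (where any Hilbert product works and only the orthogonality is used) and the second (which requires $L^2$-orthogonality).
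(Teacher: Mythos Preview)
Your proposal is correct and follows essentially the same route as the paper: test the scheme with $v=d_t u_k$, use the polarization identity to extract a telescoping difference plus nonnegative remainders, then invoke the optimality of $\ell_k=h_{u_k}$ (Proposition~\ref{lemma:lower_bound_optimal_distribution}) to switch the boundary weight from $\ell_{k-1}$ to $\ell_k$ and sum. Your remark that the argument actually yields $2\tau$ rather than $\tau$ is accurate, and your observation that the second assertion requires the $\star$-inner product to coincide with (or at least imply) $L^2$-orthogonality is a point the paper leaves implicit.
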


\begin{proof}
Testing with $v = d_t u_k$ yields
\begin{align*}
0&=\|d_t u_k\|^2_\star + \int_\Omega \nabla u_k \cdot \nabla d_t u_k \, \mathrm{d}x + \int_{\partial \Omega} \frac{(d_t u_k) u_k}{\ell_{\min} + \ell_{k-1}} \, \mathrm{d}s \\
&=\Vert d_t u_k \Vert^2_\star+\frac{1}{2} d_t \Vert \nabla u_k \Vert^2 +\frac{\tau}{2} \Vert \nabla d_t u_k \Vert^2 + \frac{1}{2} \int_{\partial \Omega} \frac{d_t (u_k^2)}{\ell_{\min} +\ell_{k-1}} + \tau \frac{(d_t u_k)^2}{\ell_{\min} +\ell_{k-1}}\, \mathrm{d}s
\end{align*}
using the equality
\begin{equation*}
u_k(d_t u_k) = \frac{1}{2}d_t (u_k^2) + \frac{\tau}{2}(d_t u_k)^2.
\end{equation*}
This implies 
\begin{align*}
0 &\ge \Vert d_t u_k \Vert^2_\star+\frac{1}{2} d_t \Vert \nabla u_k \Vert^2 + \frac{1}{2} \int_{\partial \Omega} \frac{d_t (u_k^2)}{\ell_{\min} +\ell_{k-1}} \, \mathrm{d}s.
\end{align*}
We use the minimizing property of $\ell_k = h_{u_k}$, i.e. 
\begin{equation*}
 \int_{\partial \Omega} \frac{u_k^2}{\ell_{\min} +\ell_{k}} \, \mathrm{d}s\le\int_{\partial \Omega} \frac{u_k^2}{\ell_{\min} +\ell_{k-1}}\, \mathrm{d}s
\end{equation*}
to deduce that
\begin{align*} 
0 &\ge\Vert d_t u_k \Vert^2_\star+ \frac{1}{2} d_t \Vert \nabla u_k \Vert^2 + \frac{1}{2\tau} \int_{\partial \Omega} \frac{u_k^2}{\ell_{\min} +\ell_{k}}-\frac{u_{k-1}^2}{\ell_{\min} +\ell_{k-1}}\,\mathrm{d}s \\ 
& = \Vert d_t u_k \Vert^2_\star+ \frac{1}{2} d_t \Vert \nabla u_k \Vert^2 +\frac{1}{2} \int_{\partial \Omega} d_t\left(\frac{u_k^2}{\ell_{\min} +\ell_{k}} \right)\, \mathrm{d}s.
\end{align*}
Further, the orthogonality $(d_tu_k,u_k)= 0$ implies $\Vert u_k \Vert^2_{L^2} = \Vert u_{k-1} \Vert_{L^2}^2 + \tau^2 \Vert d_t u_k \Vert^2_{L^2}$. Summing over $k = 1,\dots , K$ leads to the assertion.
\end{proof}
Since no regularization is necessary the energy convergence is better than for the version without a positive lower bound. However, if $\ell_{\min}$ is small, we observe a higher error in the numerical approximation, both in a slower convergence behaviour of the iterations and error rates in the discretization, see Proposition \ref{rein:prop:consistency_lb_ldc} and Figure \ref{rein:ex:eigenvalues_l_min_3d}.

\subsection{Convergence analysis} \label{sec:oi:conv}

We consider here only the approximation of the eigenvalue in convex domains $\Omega \subset \mathbb{R}^3$, where the domain $\Omega$ is approximated with polyhedral domains with a family of regular triangulations $(\mathcal{T}_h)_{h>0}$
interpolating the convex domains, and discretize the space $H^1(\Omega)$ with the space of continuous piecewise affine linear functions
\begin{equation*}
\mathcal{S}^1(\mathcal{T}_h) = \left\lbrace v_h \in C(\overline{\Omega}_h) : v_h \vert _T \in P_1(T) \text{ for all } T \in \mathcal{T}_h \right\rbrace.
\end{equation*}

\begin{proposition}\label{rein:prop:consistency_lb_ldc}
For a convex domain $\Omega \subset \mathbb{R}^3$ with a $C^{2,1}$-boundary assume that there exists a pair of minimizers $(u,\ell)$ with $u \in H^2(\Omega)\cap W^{1,\infty}(\Omega)$, $\Vert u \Vert_{L^2(\Omega)} = 1$ and $\ell\in H^2(\partial \Omega)\cap \mathcal{H}_{m}(\partial \Omega)$. There exists a polyhedral domain $\Omega_h\subset \Omega$ with a triangulation $\mathcal{T}_h$ with maximal mesh size $h$ and
\begin{equation*}
\Omega \triangle \Omega_h \subset \left\lbrace x \in Q : \mathrm{dist}(x,\partial \Omega) \le ch^2 \right\rbrace
\end{equation*} 
 so that
\begin{equation*}
 \min_{u_h \in \mathcal{S}^1(\mathcal{T}_h), \ell_h \in \mathcal{S}^1(\partial \mathcal{T}_h) }\big\vert J_{\ell_{\min}}(u,\ell) -J_{\ell_{\min}}(u_h,\ell_h)\big\vert \le C(\ell_{\min}^{-2})h.
\end{equation*}
\end{proposition}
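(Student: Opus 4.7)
The plan is to bound the error by choosing explicit approximations $u_h$ and $\ell_h$ as interpolants of $u$ and $\ell$, then controlling the functional difference term by term. Since $\Omega$ is $C^{2,1}$ and convex, a standard construction (e.g.\ placing the vertices of a quasi-uniform triangulation $\mathcal{T}_h$ on $\partial\Omega$ and using the inscribed polytope) yields a polyhedral $\Omega_h \subset \Omega$ satisfying $\Omega \triangle \Omega_h \subset \{x : \operatorname{dist}(x,\partial\Omega) \le ch^2\}$, so that $|\Omega \triangle \Omega_h| \le Ch^2$ and the normal projection $\pi:\partial\Omega_h \to \partial\Omega$ is a bi-Lipschitz bijection with surface Jacobian $|J_\pi - 1| \le Ch^2$. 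I would extend $u$ to a $H^2 \cap W^{1,\infty}$ neighborhood of $\Omega$ (possible by the regularity hypothesis and $C^{2,1}$ boundary) and set $u_h := I_h u \in \mathcal{S}^1(\mathcal{T}_h)$ the nodal interpolant; likewise take $\ell_h$ as a piecewise affine interpolant on $\partial\mathcal{T}_h$ of $\ell \circ \pi$. Standard interpolation estimates give $\|u - u_h\|_{L^2(\Omega_h)} + h\|\nabla(u-u_h)\|_{L^2(\Omega_h)} \le Ch^2|u|_{H^2(\Omega)}$ and an $O(h^{3/2})$ bound on the boundary trace in $L^2(\partial\Omega_h)$ via an $H^2(\partial\Omega)$ trace-interpolation argument.

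For the Dirichlet term, I would split
\[
\Bigl|\int_\Omega|\nabla u|^2\,\mathrm{d}x - \int_{\Omega_h}|\nabla u_h|^2\,\mathrm{d}x\Bigr| \le \int_{\Omega\setminus\Omega_h}|\nabla u|^2\,\mathrm{d}x + \Bigl|\int_{\Omega_h}|\nabla u|^2 - |\nabla u_h|^2\,\mathrm{d}x\Bigr|.
\]
The first piece is bounded by $\|\nabla u\|_{L^\infty}^2 |\Omega\setminus\Omega_h| \le Ch^2$ thanks to $u \in W^{1,\infty}$; the second is $O(h)$ by the $H^1$-interpolation estimate and the a priori bound $\|\nabla u\|_{L^2}+\|\nabla u_h\|_{L^2}\le C$.

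For the boundary term I would pull back the integral over $\partial\Omega$ to $\partial\Omega_h$ via $\pi$, writing
\[
\int_{\partial\Omega}\frac{u^2}{\ell_{\min}+\ell}\,\mathrm{d}s = \int_{\partial\Omega_h}\frac{(u\circ\pi)^2}{\ell_{\min}+\ell\circ\pi}\,J_\pi\,\mathrm{d}s,
\]
so the difference splits into a Jacobian contribution of order $|J_\pi-1|\le Ch^2$ times a bounded integrand, plus the genuine approximation error. Here the lower bound $\ell_{\min}>0$ is crucial: the map $(v,\mu)\mapsto v^2/(\ell_{\min}+\mu)$ is locally Lipschitz with
\[
\Bigl|\frac{v_1^2}{\ell_{\min}+\mu_1}-\frac{v_2^2}{\ell_{\min}+\mu_2}\Bigr| \le \frac{|v_1+v_2|}{\ell_{\min}}|v_1-v_2| + \frac{\max(v_1,v_2)^2}{\ell_{\min}^2}|\mu_1-\mu_2|,
\]
and applying this pointwise with $(v_1,\mu_1)=(u\circ\pi,\ell\circ\pi)$, $(v_2,\mu_2)=(u_h,\ell_h)$, then integrating, yields a bound of the form $C\ell_{\min}^{-1}\|u\circ\pi - u_h\|_{L^2(\partial\Omega_h)} + C\ell_{\min}^{-2}\|u_h\|_{L^\infty}^2\|\ell\circ\pi - \ell_h\|_{L^2(\partial\Omega_h)}$. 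Since $u\in W^{1,\infty}$ the transport error $\|u - u\circ\pi\|_{L^\infty(\partial\Omega_h)}\le Ch^2$ is negligible, and the interpolation estimates on the boundary provide the $O(h)$ rate.

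The main obstacle is precisely the boundary term, because $u$ and $u_h$ live on different surfaces and the integrand contains $1/(\ell_{\min}+\ell)$, which is only Lipschitz with a constant blowing up like $\ell_{\min}^{-2}$; this is the source of the $C(\ell_{\min}^{-2})$ prefactor in the final bound. The geometric $O(h^2)$ closeness of $\partial\Omega$ and $\partial\Omega_h$, together with the normal projection and the $W^{1,\infty}$ regularity, is what keeps the surface transport from dominating, while the interior $H^1$-interpolation error $O(h)$ determines the overall rate. Combining the Dirichlet and boundary estimates then produces the claimed inequality $|J_{\ell_{\min}}(u,\ell) - J_{\ell_{\min}}(u_h,\ell_h)| \le C(\ell_{\min}^{-2})h$.
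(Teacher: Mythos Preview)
Your proposal is correct and follows essentially the same route as the paper: choose $u_h$ and $\ell_h$ as nodal interpolants, control the Dirichlet part by splitting off $\Omega\setminus\Omega_h$ (using $u\in W^{1,\infty}$ and $|\Omega\setminus\Omega_h|\le Ch^2$) and applying the $H^1$-interpolation estimate on $\Omega_h$, and handle the boundary term via the pointwise Lipschitz bound on $(v,\mu)\mapsto v^2/(\ell_{\min}+\mu)$, which is exactly where the $\ell_{\min}^{-1}$ and $\ell_{\min}^{-2}$ factors appear. The only cosmetic differences are that the paper works with the Dziuk--Elliott \emph{lift} $\ell_h^L$ to $\partial\Omega$ rather than your pullback by the normal projection $\pi$ (these are inverse operations), and that the paper normalises its interpolants, taking $u_h=I_h u/\|I_h u\|_{L^2(\Omega_h)}$ and $\ell_h=m\,\widetilde I_h\ell/\|\widetilde I_h\ell\|_{L^1(\partial\Omega_h)}$, so that $u_h$ and $\ell_h$ are admissible for the constrained problem; this is not required by the statement as written but is what makes the proposition directly usable in the subsequent stability argument. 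Also note that since $\Omega_h\subset\Omega$ your extension of $u$ to a neighbourhood is unnecessary.
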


\begin{proof}
\begin{enumerate} 
\item We can find a sequence of discrete domains satisfying the requirement as described using standard approximation results of the triangulation \cite[Section 3.6]{dziuk} and interpolated surfaces \cite{dziukelliot}.
\item Set
\begin{equation*}
 u_h = \frac{I_h(u)}{\Vert I_h(u)\Vert_{L^2(\Omega_h)}},\qquad\ell_h = m \frac{\widetilde{I}_h(\ell)}{\Vert \widetilde{I}_h(\ell)\Vert_{L^1(\partial \Omega_h)}}
\end{equation*} 
with the nodal interpolation operator $I_h: C(\overline{\Omega})\to\mathcal{S}^1(\mathcal{T}_h)$ and $\widetilde{I}_h: C(\partial \Omega)\to\mathcal{S}^1(\partial \mathcal{T}_h)$, the nodal interpolation operator on the boundary $\partial \Omega$ as defined in \cite[Section 4]{dziukelliot}. \\For $h>0$ small enough, the functions $u_h$ and $\ell_h$ are well defined. Standard interpolation results, see \cite[Section 3.3.1]{dziuk} and \cite[Lemma 4.3]{dziukelliot} for element sides on the discrete boundary, imply
\begin{align*}
\Vert u - u_h \Vert +h \Vert \nabla [u-u_h] \Vert &\le C h^2 \Vert D^2 u \Vert\\
\Vert \ell-\ell_h^L \Vert_{L^2(\partial \Omega)} &\le Ch^2 \Vert \ell\Vert_{H^2(\partial \Omega)} 
\end{align*}
where $\ell_h^L$ is the lift of $\ell_h$ from $\partial \Omega_h$ to $\partial \Omega$, as described in \cite[Section 4.1]{dziukelliot}.
\item We have that
$$\|u\|_{H^1(\Omega\backslash\Omega_h) }\le C|\Omega\backslash\Omega_h|^{1/2}\|u\|_{W^{1,\infty}(\Omega)}\le Ch.$$
With the continuity of the trace operator we get 
\begin{align*} 
\vert J_{\ell_{\min}}&(u_h,\ell_h) -J_{\ell_{\min}}(u,\ell )\vert \le(\nabla [u_h + u], \nabla[u_h-u])_{L^2(\Omega_h)} + \Vert \nabla u \Vert^2_{L^2(\Omega\backslash \Omega_h)}\\
&\, + C \int_{\partial \Omega} \bigg\vert \left( \frac{u_h^2}{\ell_{\min}+\ell_h}\right)^L-\frac{u^2}{\ell_{\min}+\ell }\bigg\vert \,\mathrm{d}s\\
&\le C(h \Vert u \Vert^2_{H^2(\Omega)} +h^2\Vert u \Vert^2_{W^{1,\infty}(\Omega)})+ \int_{\partial \Omega} \frac{\vert (u_h^{L})^2 -u^2\vert }{\ell_{\min}+\ell^L_h} \,\mathrm{d}s+ C\int_{\partial \Omega}\frac{u^2 \vert\ell-\ell^L_h\vert }{(\ell_{\min}+\ell^L_h )(\ell_{\min}+\ell)} \,\mathrm{d}s\\
&\le Ch \|u\|^2_{H^2(\Omega)}+Ch^2\Vert u \Vert^2_{W^{1,\infty}(\Omega)} + C\ell_{\min}^{-1}h \Vert u \Vert^2_{H^2(\Omega)}+\Vert u^2 \Vert_{L^2(\partial \Omega)} \ell_{\min}^{-2} h^2 \Vert\ell \Vert_{H^2(\partial \Omega)}\\
&\le Ch \Vert u \Vert^2_{H^2(\Omega)} (1+\ell_{\min}^{-1}) + \ell_{\min}^{-2} h^2\Vert u \Vert^2_{L^4(\partial \Omega)}\Vert \ell \Vert_{H^2(\partial \Omega)} +Ch^2\Vert u \Vert^2_{W^{1,\infty}(\Omega)}\\
&\le Ch\left( (1+\ell_{\min}^{-1})\Vert u \Vert_{H^2(\Omega)}+h\Vert u \Vert^2_{W^{1,\infty}(\Omega)} + \ell_{\min}^{-2}h\Vert \ell \Vert_{H^2(\partial \Omega)} \right)
\end{align*}
with the continuous embedding of $W^{1,2}\hookrightarrow L^4(\partial \Omega)$ for $d \le 3$, \cite[Theorem 6.15]{dobrowolski}. This proves the asserted estimates.
\end{enumerate}
\end{proof}

\subsection{Stability of the shape optimization}

Since we consider the shape optimization among convex domains in $\mathbb{R}^3$, a conformal approximation of the convexity constraint among polyhedral domains is difficult.
To circumvent this, we approximate convex domains with discrete convex domains as defined in \cite{keller2_preprint}.

\begin{definition}[{\cite[Definition 2.1]{keller2_preprint}}]\label{ldc:def:discrete_convex_domain}
Let $\Omega_h \subset \mathbb{R}^3$ be a polyhedral bounded domain with a triangulation $\mathcal{T}_h$ with maximal mesh size $h$. We say that $\Omega_h$ is \textup{discrete convex} if $\partial \Omega_h$ is a piecewise linear Lagrange interpolant of the boundary $\partial \Omega$ of a convex domain $\Omega$.
\end{definition}

We discretize the class of admissible domains $\mathcal{C}_V(Q)$ with
\begin{align*}
\mathcal{C}^{h,c_{\mathrm{usr}}}_V(Q) =\lbrace \Omega _h \subset Q: &\text{ with } \mathcal{T}_h \in \mathbb{T}_{h,c_{\mathrm{usr}}} \text{ and } \Omega _h \text{ interpolates } \Omega\in \mathcal{C}_V(Q) \rbrace
\end{align*}
with $(\mathbb{T}_{h,c_{\mathrm{usr}}})_{h>0}$ the class of uniform shape regular triangulations, such that for a triangulation $\mathcal{T}_h \in \mathbb{T}_{h,c_{\mathrm{usr}}}$ and $T \in \mathcal{T}_h$ we have $h_T = \mathrm{diam}(T)\le h$ and $h_T \le c_{\mathrm{usr}} \rho_T$ for $\rho_T = \sup \{r: r>0, x \in T,B_r(x) \subset T\} $ the inner radius of $T$. \\

With the results in \cite{keller2_preprint} it is possible to prove the stability of shape optimization for simple problems, e.g. problems constrained by a Poisson problem, despite the non-conformal approximation of the convexity constraint. Of particular importance is the following compactness result.
\begin{corollary}[{\cite[Corollary 2.6]{keller2_preprint}}]
Let $ (\Omega_h)_{h>0}$ be a sequence of discrete convex domains with $\Omega_h \subset \mathcal{C}^{h,c_{\mathrm{usr}}}_V(Q)$ for $h>0$. Then there exists a convex domain $\Omega \in \mathcal{C}_V(Q)$, such that after passing to a subsequence $ \chi_{\Omega_h}\to\chi_{\Omega}$ in $L^1(Q)$.
\end{corollary}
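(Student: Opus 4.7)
The plan is to combine the classical compactness of convex subsets of $Q$ with the observation that piecewise affine interpolation of a convex surface yields a polyhedral domain that stays close to the original in Hausdorff distance.

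By Definition \ref{ldc:def:discrete_convex_domain} and the specification of $\mathcal{C}^{h,c_{\mathrm{usr}}}_V(Q)$, for each $h>0$ there exists a convex domain $\widetilde{\Omega}_h \in \mathcal{C}_V(Q)$ whose boundary $\partial \widetilde{\Omega}_h$ is interpolated nodally by $\partial \Omega_h$ on the triangulation $\mathcal{T}_h$. First I would apply to the sequence $(\widetilde{\Omega}_h)_{h>0}$ the compactness result \cite[Lemma 3.1]{BG97} already used in the proof of Proposition \ref{rein:theorem:so_existence}: since $\widetilde{\Omega}_h \subset Q$ with $|\widetilde{\Omega}_h|=V$, after passing to a subsequence there is a convex $\Omega\in\mathcal{C}_V(Q)$ with $\chi_{\widetilde{\Omega}_h}\to\chi_{\Omega}$ in $L^1(Q)$. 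For convex bodies of uniformly bounded diameter and positive volume, this is equivalent to $\widetilde{\Omega}_h\to\Omega$ in the Hausdorff sense, and $\partial \Omega$ has Lebesgue measure zero.

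Next I would estimate $\mathrm{dist}_H(\partial\Omega_h,\partial\widetilde{\Omega}_h)$ directly from the interpolation construction. Every boundary node of $\Omega_h$ lies on $\partial\widetilde{\Omega}_h$, and each boundary triangle of $\Omega_h$ has diameter at most $h$, so every point of $\partial\Omega_h$ has distance at most $h$ from $\partial\widetilde{\Omega}_h$; conversely, by convexity and the shape-regularity bound $h_T\le c_{\mathrm{usr}}\rho_T$, each surface patch of $\partial\widetilde{\Omega}_h$ above a boundary triangle of $\mathcal{T}_h$ is contained in a $C c_{\mathrm{usr}}h$-neighborhood of the corresponding affine piece. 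This gives the symmetric-difference estimate $|\Omega_h\triangle\widetilde{\Omega}_h|\le C h\,|\partial\widetilde{\Omega}_h|\le Ch\,|\partial Q|$, where the perimeter is uniformly bounded since all $\widetilde{\Omega}_h\subset Q$ are convex. Combining this with $\chi_{\widetilde{\Omega}_h}\to\chi_{\Omega}$ in $L^1(Q)$ via the triangle inequality yields $\chi_{\Omega_h}\to\chi_{\Omega}$ in $L^1(Q)$.

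The main subtlety is to make the comparison between $\Omega_h$ and $\widetilde{\Omega}_h$ uniform, since the underlying convex domain $\widetilde{\Omega}_h$ depends on $h$ and is not assumed to be smooth: one cannot invoke the familiar $O(h^2)$ interpolation bounds, and must instead rely on the purely geometric $O(h)$ comparison above. This is precisely where the shape-regularity constant $c_{\mathrm{usr}}$ enters, preventing degenerate boundary triangles as $h\to 0$; the volume preservation $|\Omega|=V$ in the limit then follows from $|\Omega_h\triangle\widetilde{\Omega}_h|\to 0$ together with the volume continuity furnished by Hausdorff convergence of the convex bodies $\widetilde{\Omega}_h$.
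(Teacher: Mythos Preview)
The paper does not supply its own proof of this corollary; it is quoted from \cite[Corollary~2.6]{keller2_preprint} and used as an imported result. Your strategy---pass from each discrete convex $\Omega_h$ to the underlying convex $\widetilde\Omega_h\in\mathcal{C}_V(Q)$ that it interpolates, apply the Blaschke-type compactness of \cite{BG97} to the convex sequence, and then transfer the $L^1$-convergence back to $(\Omega_h)$ via a bound on $|\Omega_h\triangle\widetilde\Omega_h|$---is the natural one and is almost certainly what the cited preprint does.

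The one step that deserves more care is the converse half of your Hausdorff comparison, where you assert that each curved surface patch of $\partial\widetilde\Omega_h$ over a boundary triangle lies within $Cc_{\mathrm{usr}}h$ of the corresponding affine face. Shape-regularity of the tetrahedral mesh $\mathcal{T}_h$ controls the geometry of the simplices of $\Omega_h$, but by itself it does not bound how far the convex surface $\partial\widetilde\Omega_h$ may bulge between the interpolation nodes; for that you need the precise meaning of ``piecewise linear Lagrange interpolant'' in Definition~\ref{ldc:def:discrete_convex_domain} to guarantee that the boundary nodes actually form an $h$-net on $\partial\widetilde\Omega_h$ (equivalently, that the induced curved patches themselves have diameter $O(h)$). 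This is presumably made precise in \cite{keller2_preprint}, and once it is, your triangle-inequality conclusion and the identification $|\Omega|=V$ go through as written.
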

This is a weaker result than the compactness with respect to convergence in variation, which is available for convex domains \cite{BG97}, but sufficient for simple optimization problems such as the ones considered in \cite{keller2_preprint}.
When adressing the approximation of problems with boundary terms or eigenvalues, better convergence results are needed, mainly a uniform bound on the trace operator as used in \cite{keller} and Proposition \ref{rein:theorem:so_existence}. The uniform bound can be proven with a constructive proof, cf. \cite{keller,payne}, by using a cone condition as described in \cite{adams}. With some technical assumptions on the admissible triangulations it is then possible to construct a uniform bound on the trace operator for the discrete domains. For now, we make the assumptions, that the class of discrete domains is suitable, such that the trace operator can be uniformly bounded.

\begin{align*}\label{rein:problem:lb_ldc}
\begin{cases}
\textup{Minimize } &J_{\ell_{\min}} (\Omega_h,u_h,\ell_h) \\
\textup{w.r.t. }&\Omega_h \in\mathcal{C}^{h,c_{\mathrm{usr}}}_V(Q)\text{ with }\vert \partial \Omega_h \vert \le \widehat{m}/\ell_{\min}\\ \text{with } & \mathcal{T}_h \in \mathbb{T}_{c_{\mathrm{usr}},h} \textup{ triangulation of } \Omega_h \tag{$\mathbf{P}_{\widehat{m},\ell_{\min}}^{h}$}\\
\textup{s.t. }& (u_h,\ell_h)\in \mathcal{S}^1(\mathcal{T}_h) \times \mathcal{S}^1(\partial \mathcal{T}_h) \textup{ minimizes } J_{\ell_{\min}}(\Omega_h,\cdot,\cdot) \\ \textup{and } &\Vert u_h \Vert_{L^2(\Omega_h)} = 1, \ell_h \in \mathcal{H}_{m}(\partial \Omega_h) \text{ with } m_h = \widehat{m}-\vert \partial \Omega_h \vert \ell_{\min}
\end{cases}
\end{align*} 
 Convergence as $ h \rightarrow 0$ holds in the sense of the following theorem.
\begin{theorem}
Assume that the class of discrete convex domains $(\mathcal{C}^{h,c_{\mathrm{usr}}}_V(Q))_{h>0}$ are such that a uniform bound on the trace inequality independent on $h$ applies. Let $(\Omega_h,u_h,\ell_h, \mathcal{T}_h)_{h>0}$ be solutions to the discrete Problems \eqref{rein:problem:lb_ldc}. Then for every accumulation point $(\Omega,u)$, the domain $\Omega$ solves Problem \eqref{rein:problem:m_ellmin} as $h\to 0$ with $u \in H^1(\Omega)$ is the corresponding eigenfunction.
\end{theorem}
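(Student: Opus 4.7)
The plan is to mimic the strategy of Proposition \ref{rein:theorem:so_existence}, upgraded to handle the fact that $\Omega_h$ itself varies with $h$, and to then add a recovery-sequence step using the consistency estimate of Proposition \ref{rein:prop:consistency_lb_ldc}. First I would extract compactness: by the stated corollary from \cite{keller2_preprint}, after passing to a subsequence $\chi_{\Omega_h}\to\chi_\Omega$ in $L^1(Q)$ for some $\Omega\in\mathcal{C}_V(Q)$; with the assumed uniform trace bound and the normalization $\|u_h\|_{L^2(\Omega_h)}=1$, the energy estimate $J_{\ell_{\min}}(\Omega_h,u_h,\ell_h)\le C$ yields a uniform $H^1(\Omega_h)$ bound on $u_h$, so that the trivial extensions $\widetilde{u}_h$ and $\widetilde{\ell}_h$ are bounded in $SBV(Q)$, and weak $SBV$ limits $\widetilde{u}$, $\widetilde{\ell}$ exist, with $\widetilde{u}|_\Omega\in H^1(\Omega)$, $\widetilde{u}|_{Q\setminus\overline{\Omega}}=0$, and analogously for $\widetilde{\ell}$, exactly as in steps (3)–(4) and (6) of the existence proof. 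The normalization $\|u\|_{L^2(\Omega)}=1$ follows from the strong $L^2(Q)$ convergence.

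Next I would pass the perimeter constraint to the limit. Since $\Omega_h$ interpolates a convex domain, one has $|\partial\Omega|\le\liminf_h|\partial\Omega_h|$ (via lower semicontinuity of perimeter under $L^1$ convergence of characteristic functions, combined with the assumed shape regularity of the interpolants), giving $|\partial\Omega|\le\widehat{m}/\ell_{\min}$ and $m_h\to m=\widehat{m}-\ell_{\min}|\partial\Omega|\ge0$. Then I would show $(\Omega,u,h_u)$ is admissible and $J_{\ell_{\min}}(\Omega,u,h_u)\le\liminf_{h\to0}J_{\ell_{\min}}(\Omega_h,u_h,\ell_h)$. For the Dirichlet part this is the weak $L^2$ lower semicontinuity of $\nabla\widetilde{u}_h$; for the boundary term I would repeat step (7) of the existence proof, extracting a limit $c$ of the normalization constants $c_h$ (bounded by the trace inequality), splitting cases $\liminf c_h=0$ versus $\liminf c_h>0$, and in the latter case using the chain rule for $BV$ and the Lipschitz function $G_{c_h}$ together with the jump-energy lower semicontinuity (\cite[Theorem 2.12]{braides}); the optimality of $h_u$ among admissible densities on $\partial\Omega$ then gives step (8) and finalizes the inequality
\begin{equation*}
J_{\ell_{\min}}(\Omega,u,h_u)\le\liminf_{h\to 0}J_{\ell_{\min}}(\Omega_h,u_h,\ell_h).
\end{equation*}

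For the reverse inequality I would build a recovery sequence. Let $\widetilde{\Omega}\in\mathcal{C}_V(Q)$ be arbitrary with $|\partial\widetilde{\Omega}|\le\widehat{m}/\ell_{\min}$ and let $(\widetilde{u},\widetilde{\ell})$ be minimizers of $J_{\ell_{\min}}(\widetilde{\Omega},\cdot,\cdot)$; by standard density and regularization one may assume $\widetilde{u}\in H^2(\widetilde{\Omega})\cap W^{1,\infty}(\widetilde{\Omega})$ and $\widetilde{\ell}\in H^2(\partial\widetilde{\Omega})$. Using the polyhedral interpolation of $\partial\widetilde{\Omega}$ from \cite{keller2_preprint} I obtain discrete convex $\widetilde{\Omega}_h\in\mathcal{C}^{h,c_{\mathrm{usr}}}_V(Q)$ (rescaling slightly to preserve the volume constraint and adjusting $\widehat{m}$ in the perimeter inequality, since $|\partial\widetilde{\Omega}_h|\to|\partial\widetilde{\Omega}|$); setting $\widetilde{u}_h$, $\widetilde{\ell}_h$ to be the normalized nodal interpolants, Proposition \ref{rein:prop:consistency_lb_ldc} yields
\begin{equation*}
J_{\ell_{\min}}(\widetilde{\Omega}_h,\widetilde{u}_h,\widetilde{\ell}_h)\to J_{\ell_{\min}}(\widetilde{\Omega},\widetilde{u},\widetilde{\ell}).
\end{equation*}
The discrete optimality of $(\Omega_h,u_h,\ell_h)$ gives $J_{\ell_{\min}}(\Omega_h,u_h,\ell_h)\le J_{\ell_{\min}}(\widetilde{\Omega}_h,\widetilde{u}_h,\widetilde{\ell}_h)$, so chaining with the previous lower semicontinuity inequality shows $J_{\ell_{\min}}(\Omega,u,h_u)\le J_{\ell_{\min}}(\widetilde{\Omega},\widetilde{u},\widetilde{\ell})$, i.e.\ $\Omega$ solves \eqref{rein:problem:m_ellmin}, and the eigenvalue characterization identifies $u$ as the corresponding eigenfunction.

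I expect the main obstacle to be the perimeter constraint: while $\chi_{\Omega_h}\to\chi_\Omega$ in $L^1(Q)$ only gives $|\partial\Omega|\le\liminf|\partial\Omega_h|$, the recovery side requires the opposite behaviour $|\partial\widetilde{\Omega}_h|\le\widehat{m}/\ell_{\min}$ for the interpolants, which is not automatic if $|\partial\widetilde{\Omega}|$ saturates the constraint. The remedy is a slight inward perturbation of $\widetilde{\Omega}$ before interpolating, together with a continuous-dependence argument on $\ell_{\min}$ and $\widehat{m}$; this, together with the technical but routine verification that the constants arising in Remark \ref{rein:remark:optimal_dis_lb} pass to the limit (i.e.\ $c_{u_h}\to c_u$ whenever $c_u>0$), will be the most delicate bookkeeping in the argument.
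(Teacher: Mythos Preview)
Your approach is essentially the same as the paper's: extract compactness via the corollary from \cite{keller2_preprint}, invoke the assumed uniform trace bound to repeat the $SBV$ lower-semicontinuity arguments of Proposition \ref{rein:theorem:so_existence} (including the case split on $\liminf c_h$), and then build a recovery sequence from Proposition \ref{rein:prop:consistency_lb_ldc} together with density; the paper also cites Reshetnyak's theorem at this last step, which supplies the perimeter convergence $|\partial\widetilde{\Omega}_h|\to|\partial\widetilde{\Omega}|$ you are worried about for the interpolants. One small correction: from $L^1$-convergence of $\chi_{\Omega_h}$ alone you only get lower semicontinuity of the perimeter, so the paper records $m\ge\limsup_h m_h$ rather than your claimed $m_h\to m$ (convergence in variation is \emph{not} available for discrete convex domains, unlike in Proposition \ref{rein:theorem:so_existence}); this is harmless since only the inequality is used when showing $\int_{\partial\Omega}\ell\,\mathrm{d}s\le m$.
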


\begin{proof}
Let us proceed step by step.
\begin{enumerate}
\item Using the compactness for discrete convex domains in \cite[Corollary 2.6]{keller2_preprint} there exists a convex domain $\Omega \in \mathcal{C}_V(Q)$, such that the characteristic functions converge in $L^1(\Omega)$. Without the convergence in variation of the characteristic functions, the perimeter is only lower semicontinuous and in particular 
\begin{equation*}
m = \widehat{m} - \ell_{\min}\vert \partial \Omega \vert \ge \widehat{m} - \ell_{\min}\liminf_{h\to0} \vert \partial \Omega_h \vert = \limsup_{h\to0} m_h.
\end{equation*}
\item Using the uniform trace operator for the class $(\mathcal{C}^{h,c_{\mathrm{usr}}}_V(Q))_{h >0}$, we use the same arguments as in the proof of Proposition \ref{rein:theorem:so_existence} to show that the trivially extended functions $\widetilde{u}_h$ admit a convergent subsequence such that 
\begin{align*}
\widetilde{u}_h \rightharpoonup \widetilde{u} \in SBV(Q) \quad &\text{ with } \widetilde{u} \vert_{\Omega} \in H^1(\Omega), \Vert u \Vert_{L^2(\Omega)} = 1.
\end{align*}
Likewise, we can find a function $\ell \in L^2(\partial \Omega)$, such that
\begin{equation}
\int_{\partial \Omega} \ell\, \mathrm{d}s \le \liminf_{h\to0} \int_{\partial \Omega_h} \ell_h\, \mathrm{d}s \le \liminf_{h\to0} m_h \le m
\end{equation}
and
\begin{equation}\label{rein:eq:stability_ldc_lsc}
 J_{\ell_{\min}} (u,h_u) \le J_{\ell_{\min}}(u,\ell) \le \liminf_{h\to0} J_{\ell_{\min}}(u_h,\ell_h)
\end{equation}
for the optimal distribution $h_u \in \mathcal{H}_m(\partial \Omega)$ given by Proposition \ref{lemma:lower_bound_optimal_distribution}.
\item Suppose $\Omega^\star$ is an optimal domain to Problem \eqref{rein:problem:m_ellmin} with eigenfunction $u^\star \in H^1(\Omega)$ and optimal distribution of insulating material $\ell^\star \in\mathcal{H}_m(\partial \Omega)$.
 Using Proposition \ref{rein:prop:consistency_lb_ldc}, density arguments and Reshetnyak's theorem \cite[Theorem 2.5]{BG} we can find a sequence of admissible triples $(\Omega_h^\star, u^\star_h, \ell^\star_h)$, such that
\begin{equation*}
\lim_{h\to0} J_{\ell_{\min}}(\Omega_h^\star,u^\star_h,\ell^\star_h) = J_{\ell_{\min}}(\Omega^\star,u^\star,\ell^\star) .
\end{equation*}
With \eqref{rein:eq:stability_ldc_lsc}, we then have
\begin{equation*}
 J_{\ell_{\min}}(\Omega^\star,u^\star,\ell^\star)= \lim_{h\to0} J_{\ell_{\min}}(\Omega_h^\star,u^\star_h,\ell^\star_h) \ge \liminf_{h\to0} J_{\ell_{\min}}(\Omega_h,u_h,\ell_h) \ge J_{\ell_{\min}}(\Omega,u,\ell_{u})
 \end{equation*}
which implies the optimality of the limit $\Omega$ with eigenfunction $u \in H^1(\Omega)$ and optimal distribution $\ell_u\in\mathcal{H}_m(\partial \Omega)$.
\end{enumerate}
\end{proof}

\section{Numerical experiments} \label{sec:num_exp}

In this section we first consider the numerical approximation of the eigenfunction on $B_1(0) \subset \mathbb{R}^3$, for several values $m < m_0$ where $m_0$ is the critical value, such that, for the eigenvalue without a lower bound, symmetry breaking occurs. For $m > m_0$, the optimal distribution is constant, such that the lower bound becomes redundant. \\
Then we approximate the optimal domains for the eigenvalues arising in optimal insulation both with and without a lower bound. For ease of notation when $\ell_{\min} =0 $ we set $\widehat{\lambda}_{\widehat{m},0}: = \lambda_{\widehat{m}}(\Omega)$, approximated as described in \cite{BB19} with regularization parameter $\varepsilon = N^{-1/d}/10$ for the boundary term, with $d = 3$ and $N$ the number of nodes in the triangulation.

\subsection{Approximation of the eigenvalue with a lower bound on the ball}

The critical value of mass $m_0$ relates to the first non-trivial Neumann eigenvalue $\mu_2$ and satsfies $\lambda_{m_0}(B_1(0)) = \mu_2(B_1(0))$, see \cite{BBN17}.
For the approximation in $\mathbb{R}^3$, we can estimate that $m_0\approx 5.7963$. Precise values for $m_0$ can be found in \cite[Theorem 1.6]{huang}. We consider $\widehat{m} \in \{2,\dots , 5\}$ and $\ell_{\min}=(\widehat{m}/|\partial\Omega|)q$, $q\in[0,1]$, such that the mass fraction $q$ of the total mass $\widehat{m}$ is fixed through a constant distribution. In Figure \ref{rein:ex:eigenvalues_l_min_3d} the eigenvalues for $q = i/20, i = 0, \dots, 20 $ are shown. If $i = 0,$ i.e. $\ell_{\min} = 0$, this corresponds to the eigenvalue arising from the Dirichlet boundary conditions \eqref{eq:rayleigh_oi}.
For $i = 20$, the insulating material is distributed evenly, resulting in Robin boundary conditions.
Following the results of Proposition \ref{prop:sym_breaking_lb} we observe symmetry breaking in the optimal insulation occurs for all $q < 1$ and $\widehat{m} < m_0$.
For $\ell_{\min} $ small, especially if $\widehat{m}$ is small as well, the increase of the eigenvalue with respect to $\ell_{\min}$ is less distinct and showing a clear spike for $\widehat{m} = 2$ at around $\ell_{\min} \approx 0.03$. This is most likely caused by numerical inaccuracies as indicated by the error estimates in Section \ref{sec:oi:conv} which depend on $\ell_{\min}^{-2}$. In two-dimensional experiments, where a higher resolution of the domain is possible, this is no longer observed. The optimal asymmetric distributions of insulating material are shown in Figure \ref{rein:ex:ball_3d} for $\widehat{m} \in \{2,3,5\}$, and $q = i/N$, for $N=5$, $i=0,\dots,N$.

\begin{figure}[h] 
\begin{center}
     \includegraphics[width = 6cm]{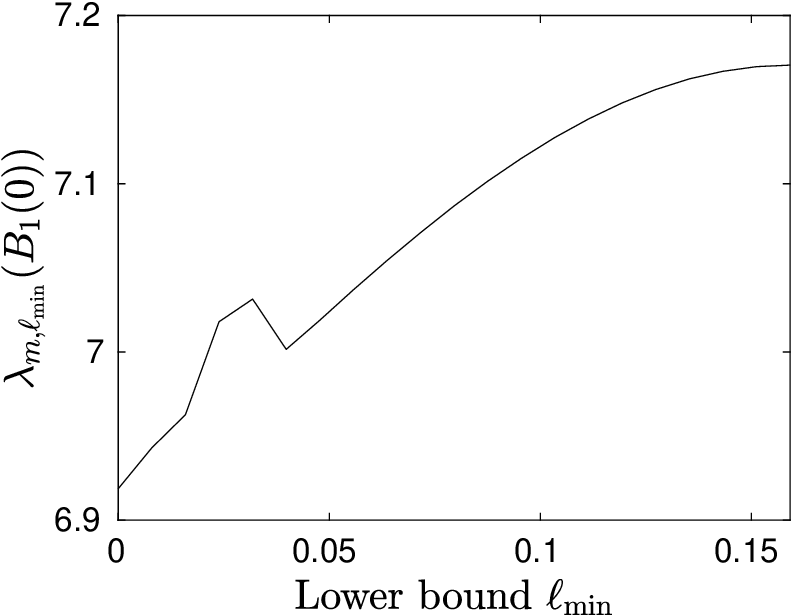}\quad
     \includegraphics[width = 6cm]{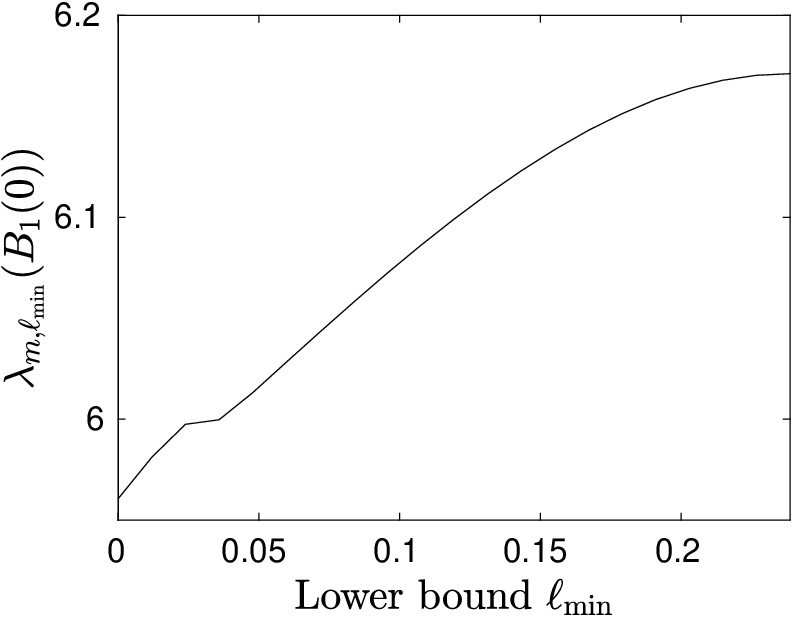}\\ \ \\
     \includegraphics[width = 6cm]{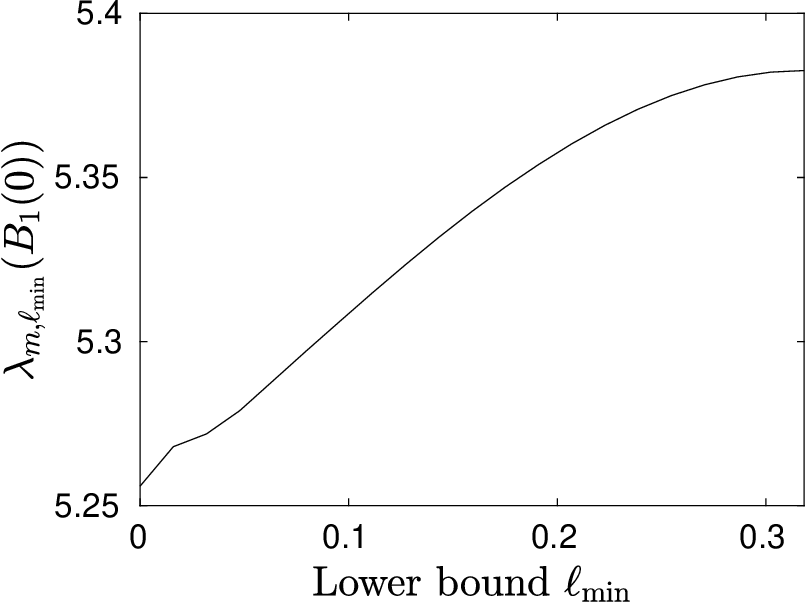}\quad
     \includegraphics[width = 6cm]{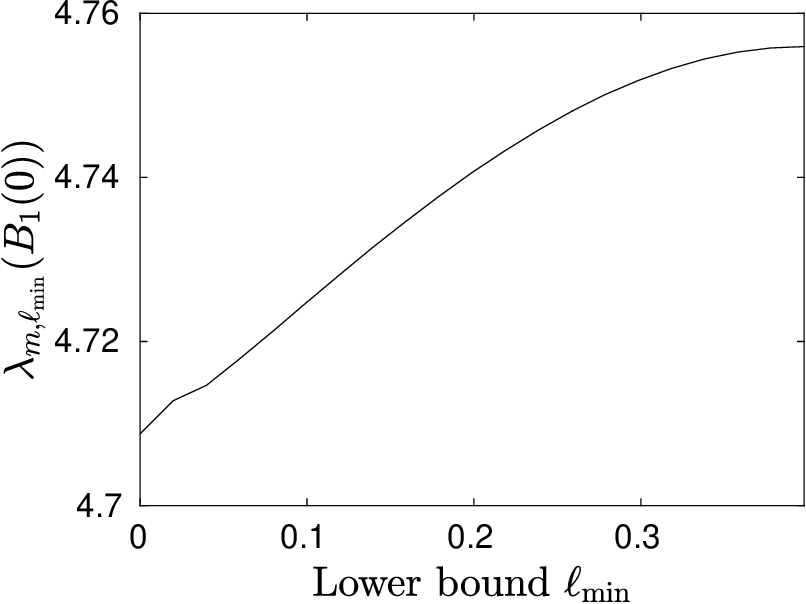}
  \caption{Eigenvalues $\lambda_{m,\ell_{\min}}(B_1(0))$ with $B_1(0) \subset \mathbb{R}^3$ for $\widehat{m} = \{2, \dots, 5\}$ and $m = \widehat{m} - \ell_{\min} \vert B_1(0)\vert $ (left to right, top to bottom) as functions of $\ell_{\min} = (\widehat{m}/ \vert \partial B_1(0) \vert) q$, for $ q \in [0,1]$, evaluated for $q = i/20, i = 0, \dots, 20 $, approximated on a triangulation with mesh size $h = 2^{-3}$. }\label{rein:ex:eigenvalues_l_min_3d}
\end{center}
\end{figure}

\begin{figure}[h]
\begin{center}
\begin{minipage}{0.3\textwidth}
\begin{center}
 \includegraphics[width = 0.95\textwidth]{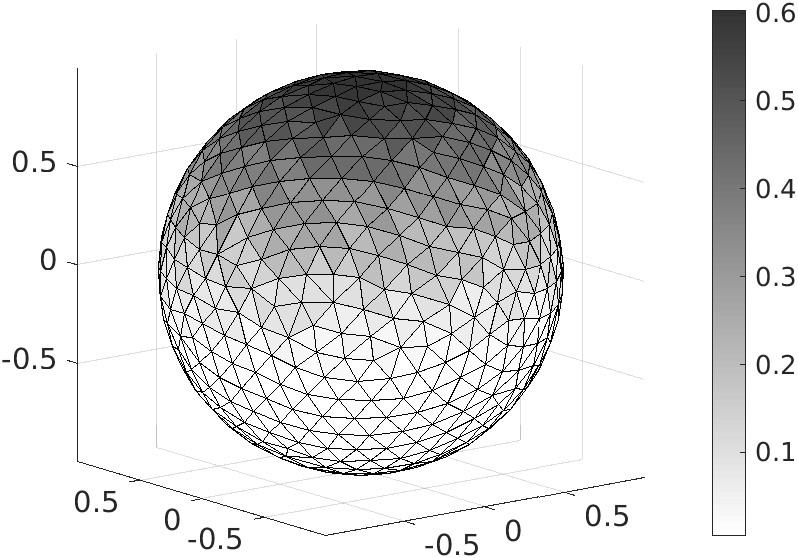}
     \includegraphics[width = 0.95\textwidth]{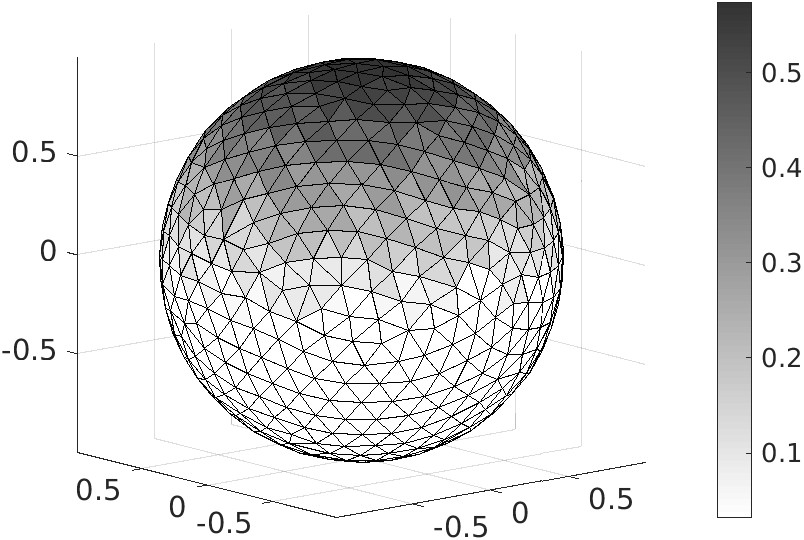}
     \includegraphics[width = 0.95\textwidth]{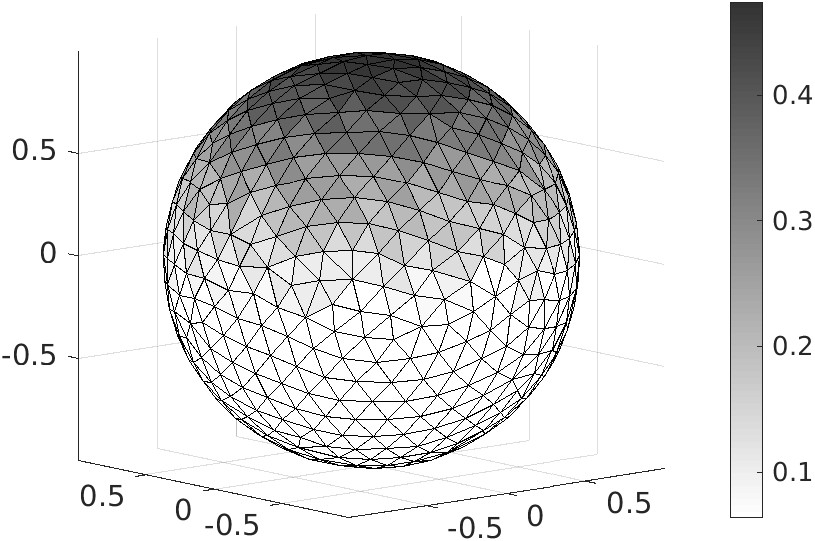}
     \includegraphics[width = 0.95\textwidth]{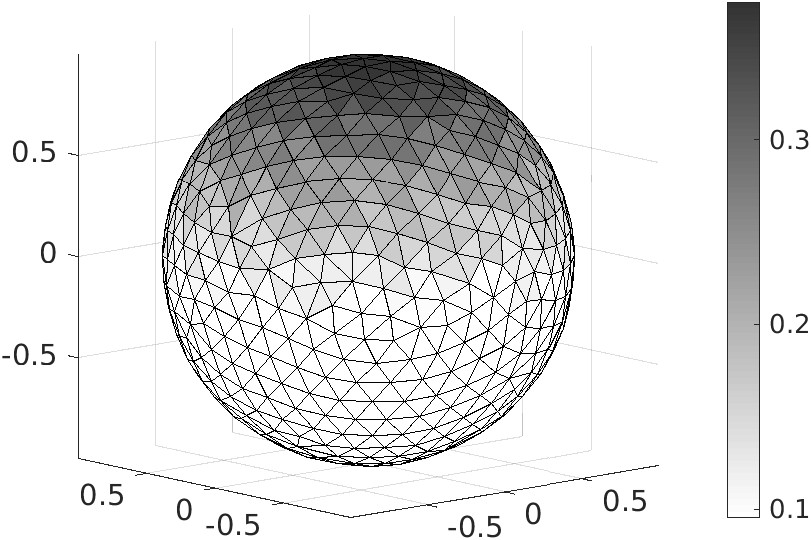}
     \includegraphics[width = 0.95\textwidth]{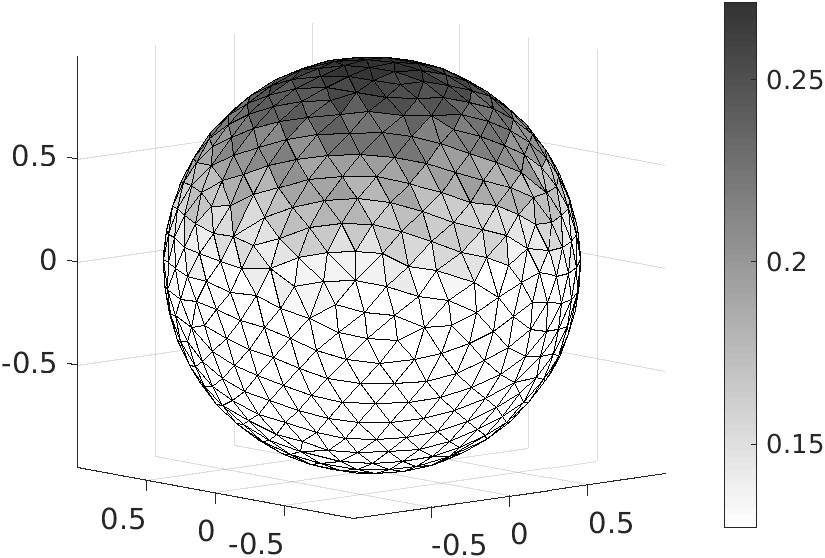}
\end{center}
\end{minipage}
\vline
\begin{minipage}{0.3\textwidth}
\begin{center}
 \includegraphics[width = 0.95\textwidth]{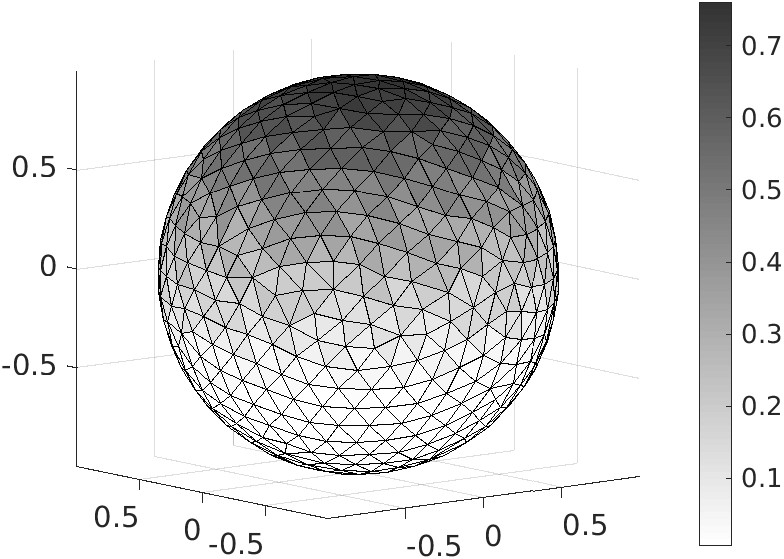}
     \includegraphics[width = 0.95\textwidth]{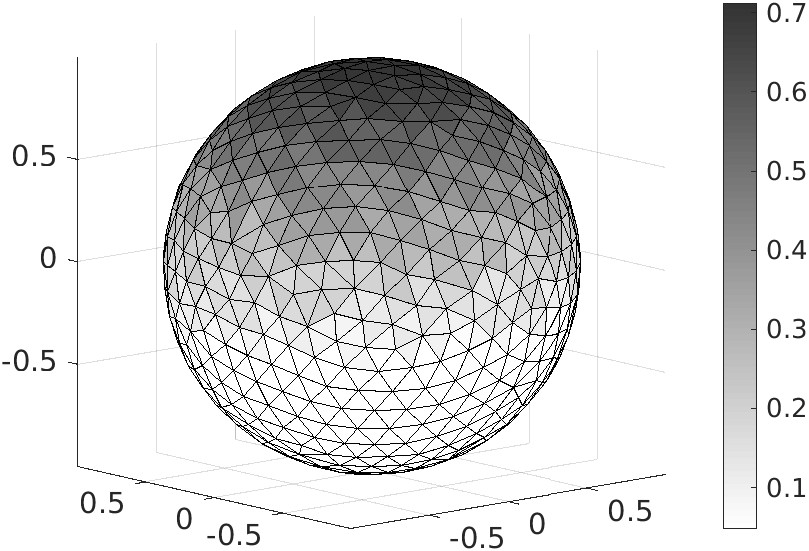}
     \includegraphics[width = 0.95\textwidth]{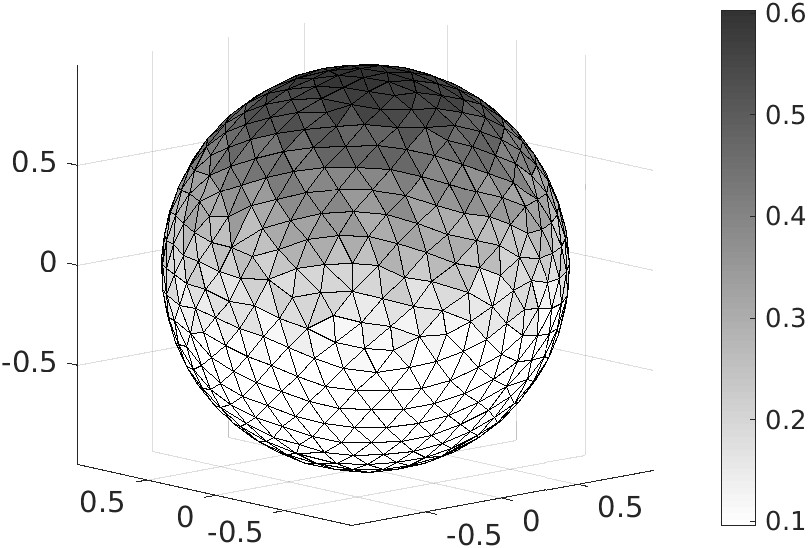}
     \includegraphics[width = 0.95\textwidth]{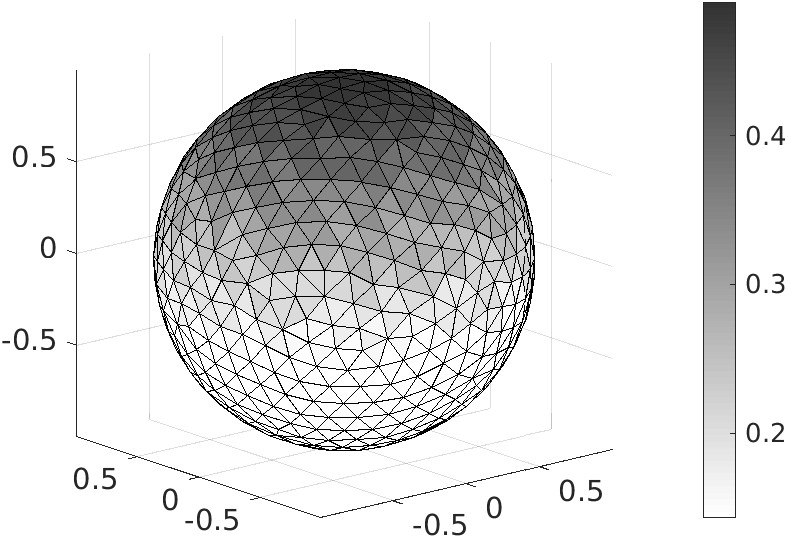}
     \includegraphics[width = 0.95\textwidth]{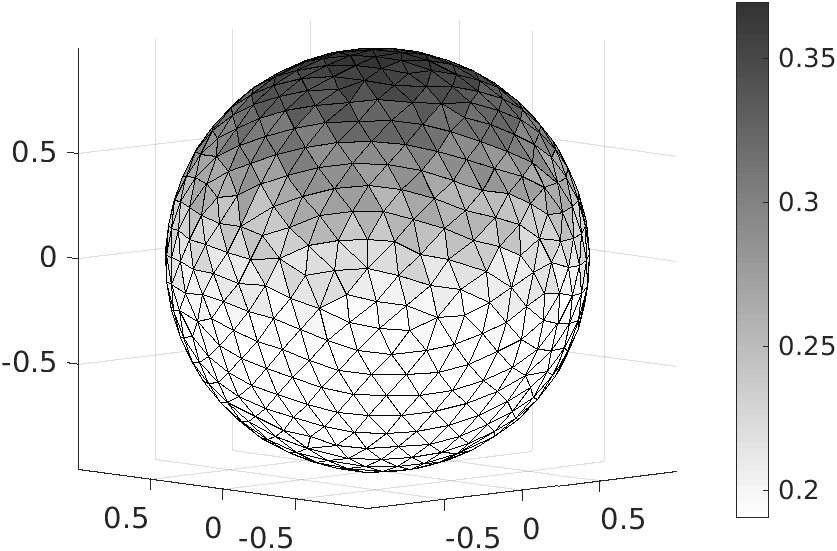}
\end{center}
\end{minipage}
\vline
\begin{minipage}{0.3\textwidth}
\begin{center}
 	\includegraphics[width = 0.95\textwidth]{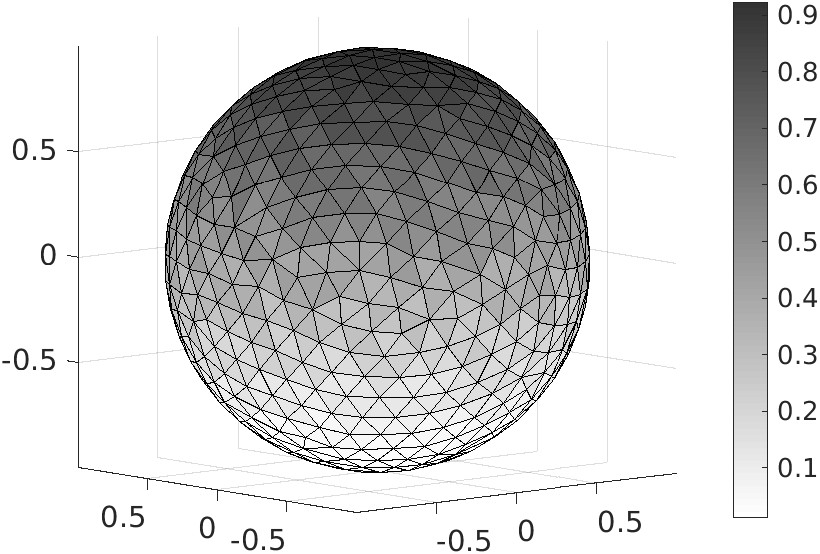}
     \includegraphics[width = 0.95\textwidth]{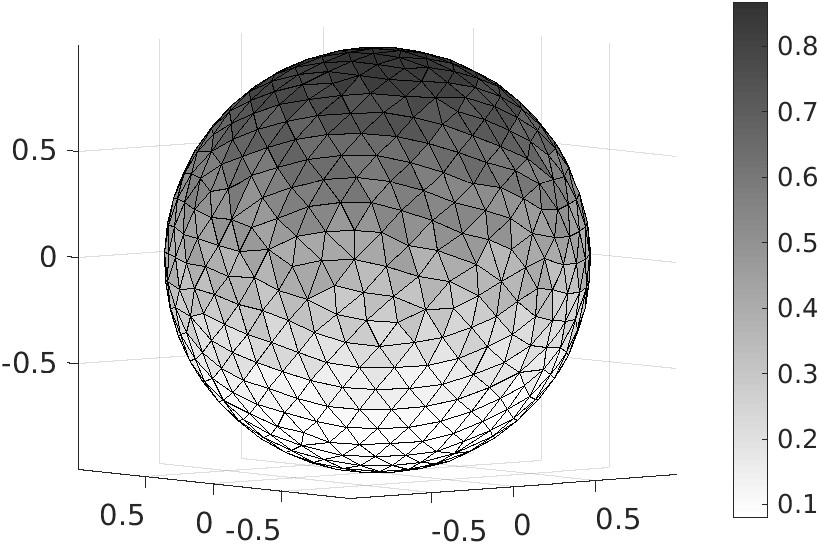}
     \includegraphics[width = 0.95\textwidth]{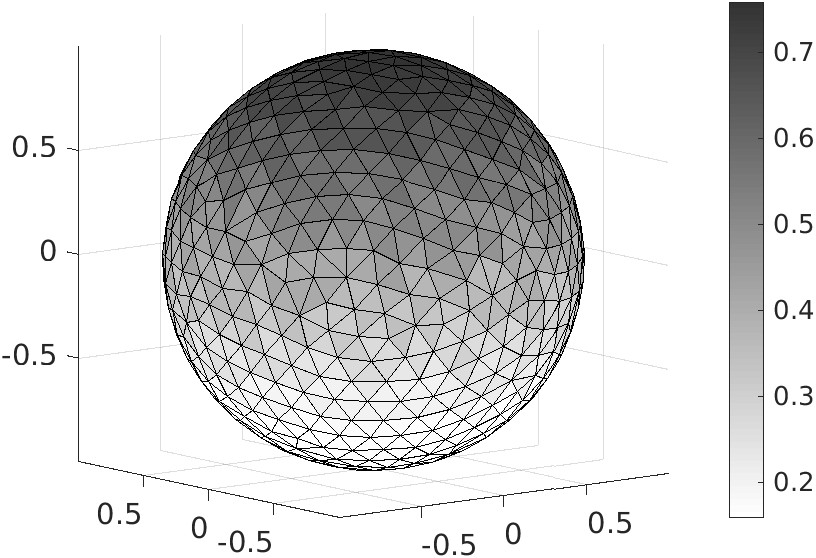}
     \includegraphics[width = 0.95\textwidth]{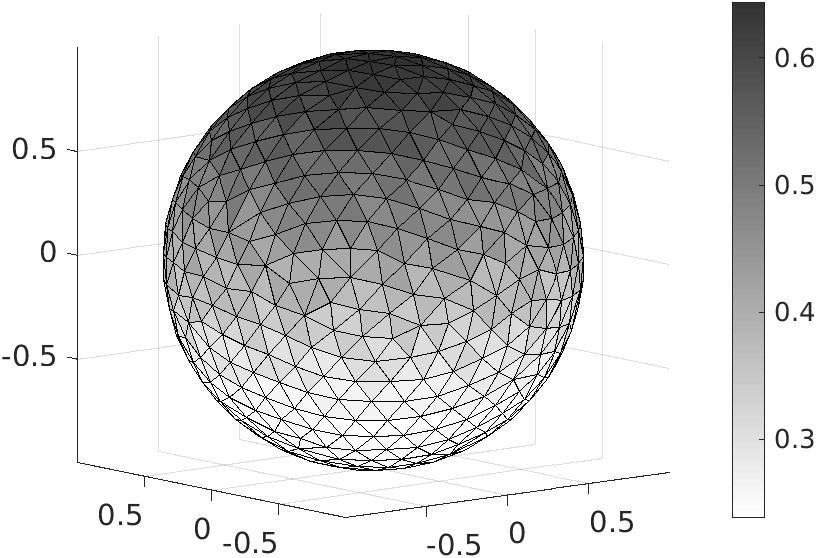}
     \includegraphics[width = 0.95\textwidth]{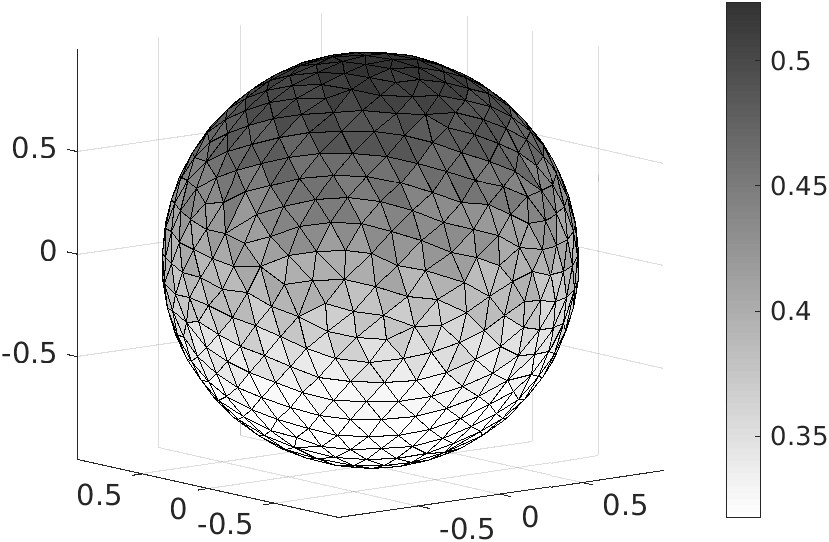}
\end{center}
\end{minipage}

\caption{Optimal distribution to the eigenvalue $\lambda_{m,\ell_{\min}}(B_1(0))$ with $B_1(0) \subset \mathbb{R}^3$ for $\widehat{m}\in\{2,3,5\}$ and $m=\widehat{m}-\ell_{\min}|B_1(0)|$ (left to right), and $\ell_{\min}=(\widehat{m}/ \vert \partial B_1(0) \vert) q$, for $ q = i/5$, for $ i = 0, \dots ,4$ (top to bottom), the optimal distribution $\widehat{\ell} = \ell_{\min} + h_{u}$ is shown on the boundary; approximated on a triangulation with mesh size $h = 2^{-3}$. The optimal distributions appear to be non-radial but rotationally symmetric for all $q<5$, i.e. where the free mass $m = \widehat{m} - \vert \partial B_1(0) \vert \ell_{\min} >0$.}\label{rein:ex:ball_3d}
\end{center}
\end{figure}

\subsection{Shape optimization}

We optimize the eigenvalue $\widehat{\lambda}_{\widehat{m},\ell_{\min}}$ within the class $\mathcal{C}_V(Q)$. To reduce the number of constraints we drop volume constraint and optimize the scaled eigenvalues instead. We then use the algorithm as described in \cite{keller2_preprint}, and compute the deformation via a problem of linear elasticity with elasticity parameters $E = 0.5,\nu =0.2$ for Young's module, and the damping parameter $\rho = 0.5$ to ensure the coercivity of the elasticity bilinear form.\\
From \cite{BBN17, keller, BB19} we expect that a breaking of symmetry occurs for the eigenfunctions of the ball and the optimal domains if $\widehat{m}<m_0\approx 5.7963$.

The experiments below where repeated for different initial domains with a mesh size $h = 2^{-3}$ and gave consistent results. For $\ell_{\min} = 0,$ the results where consistent with the optimization among rotationally symmetric domains from \cite{keller}. In particular, it can be observed, that there exist stationary asymmetric domains for $\widehat{m}\in \{2, \dots, 10\}$, and that the ball is stationary for $\widehat{m} \ge 6$.
The optimal domains were approximated for $\widehat{m} \in \{2,\dots ,10\}$, and $\ell_{\min} = ( \widehat{m}/ \vert \partial B_1(0) \vert) q$ for $ q \in [0,1)$, evaluated for $q = i/5, q = 0, \dots, 4 $. For $q=1$, the shape optimization is trivial, see Section \ref{sec:lb_ev}, and is solved by a ball with a constant layer of insulating material. The eigenvalue problem is then equivalent to the Robin eigenvalue problem. \\
For $q = 0$, the shape optimization corresponds to the optimization of the eigenvalue in optimal insulation \eqref{eq:rayleigh_oi}, approximated as in \cite{BB19} with regularization parameter $\varepsilon = N^{-1/d}/10$, where $N$ is the number of nodes in the mesh. 
The optimized eigenvalues are shown in Figure \ref{rein:ex_solb_3d_val_1} and Figure \ref{rein:ex_solb_3d_val_2} for $\widehat{m} <m_0$ and $\widehat{m}>m_0$ respectively, where we include the values for $q = 0$ and $q = 1$ from the values for the regular eigenvalue in optimal insulation $\lambda_{\widehat{m}}(\Omega)$ and for the first Robin eigenvalue on the ball $\lambda_1(B_1(0),\ell_{\min}^{-1})$ with $\ell_{\min} = \widehat{m}/\vert \partial B_1(0) \vert$.\\
In Figures \ref{rein:ex:so_3dm1} and \ref{rein:ex:so_3dm2} we show the optimized asymmetric domains for $\widehat{\lambda}_{\widehat{m},\ell_{\min}}$ for $\widehat{m} \in \{2,\dots 9\}$ and different values of $\ell_{\min} \in [0, \widehat{m}/ \vert \partial B_1(0) \vert )$. 
 For $\widehat{m} < m_0$, they are compared to the respective eigenvalues in the unit ball. For $m > m_0$, Theorem \ref{thm:sym_breaking} implies that the optimal distribution on the ball is always constant for any $\ell_{\min}$, such that the lower bound becomes redundant. \\
For $\ell_{\min} = 0$, i.e. the classical eigenvalue in optimal insulation, the results are consistent with our expectations and the previous numerical experiments \cite{keller}. In particular, all approximated domains and the corresponding distribution of insulating material are approximately rotationally symmetric, even if the rotational symmetry is not enforced in the approximation with local discrete convex domains. To ensure that no bias occurs due to only using rotationally symmetric initial domains, we also optimized initial domains which are not rotationally symmetric. \\
For $\ell_{\min}>0$ similar patterns regarding the breaking of symmetry can be observed.
The experiments suggest, that as $q\to1$ and vanishing free mass $m\to0$, the optimal domains transform from the asymmetric domain to the ball, which is optimal if $ m= 0$. This corresponds to a constant distribution of insulating material. For $\widehat{m} < m_0$ the domains appear to be asymmetric for all values of $q$, whereas if $\widehat{m} >m_0$ for larger $q$, the approximated asymmetric domain is not optimal compared to the ball with a constant distribution or no asymmetric domain is approximated and the ball appears to be optimal. \\ For $q = 0$, the optimal domains often tend to be non-smooth, see also \cite{keller}. With increasing $q$ the kink in the boundary seems to smoothen and in general for $q >0$, a singular behaviour of the boundary cannot be observed.
Another interesting observation lies in the thickness of the optimal distributions.
 For $\ell_{\min}=0$ it was observed that for $\widehat{m} <m_0$, the insulating material is thickest around the pointed end of the domains, while for larger $\widehat{m}$, this behaviour reverses, such that the blunt end is insulated better. For $\ell_{\min}>0$ , this pattern can be observed as well and the location, where the insulating material is thickest, is observed to depend on whether $\widehat{m} < m_0$ or $\widehat{m} >m_0$.

\begin{figure}[bh!] 
\begin{center}
\begin{minipage}{0.49\textwidth}
\begin{center}
	\includegraphics[height = 4cm]{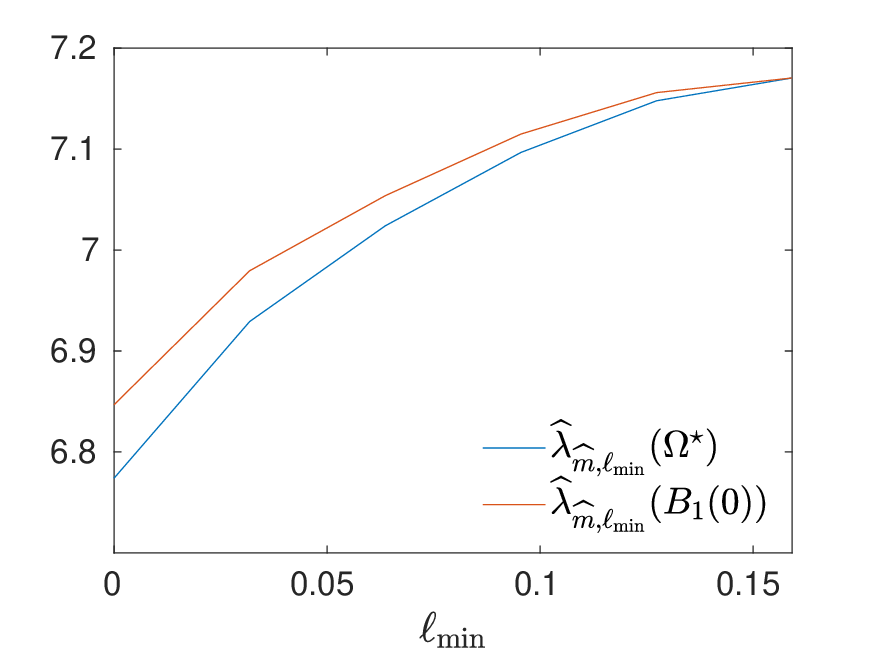}
     \includegraphics[height = 4cm]{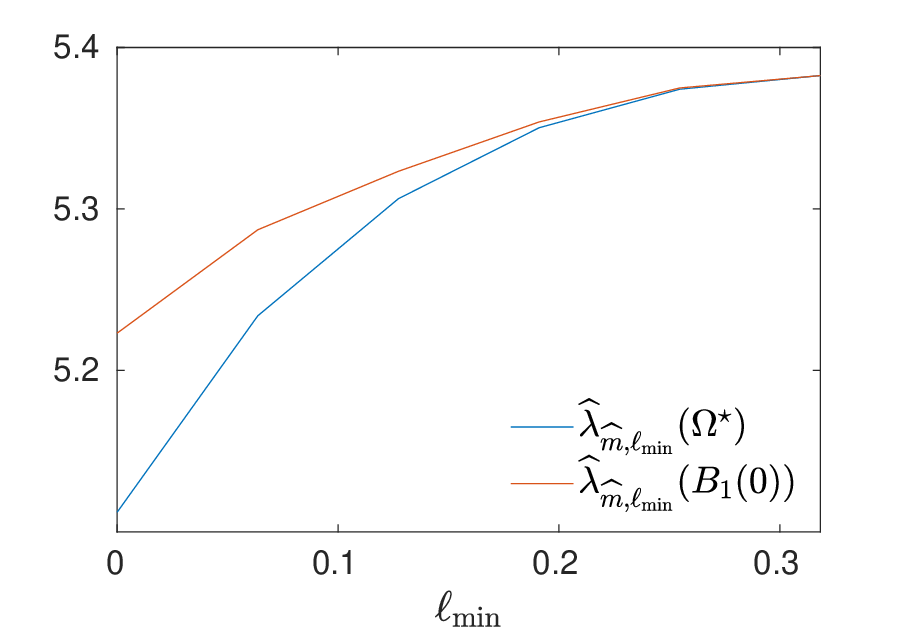}
     \end{center}
     \end{minipage}
     \begin{minipage}{0.49\textwidth}
\begin{center}
     \includegraphics[height = 4cm]{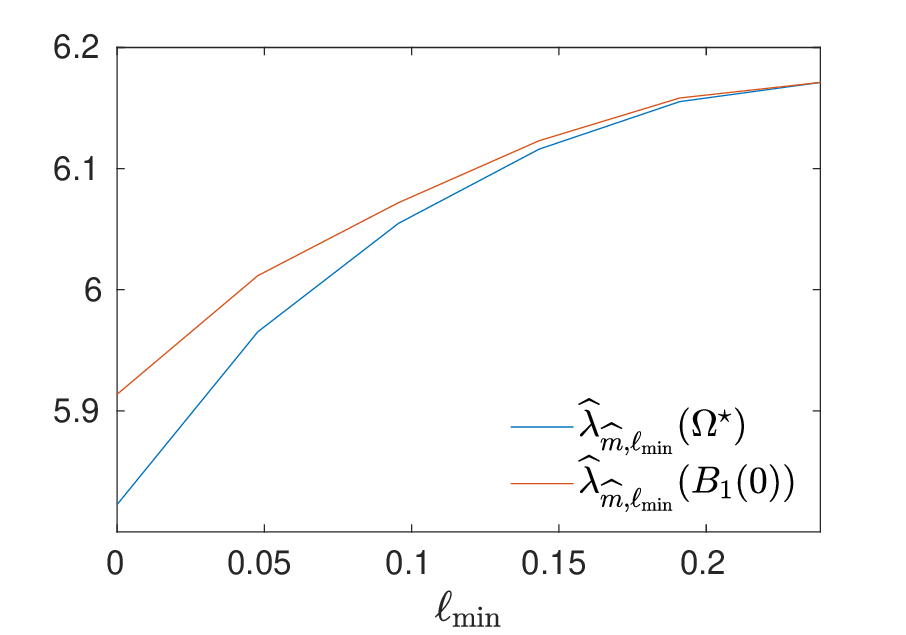}
     \includegraphics[height = 3.8cm]{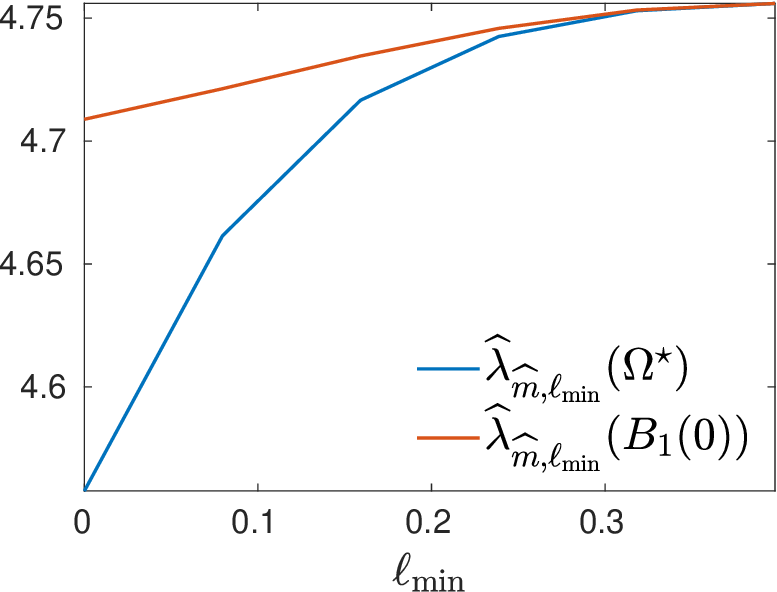}
\end{center}
\end{minipage}
\caption{Eigenvalues $\widehat{\lambda}_{\widehat{m},\ell_{\min}}$ of the initial domain $B_1(0) \subset \mathbb{R}^3$ and the approximated optimal domains $\Omega^\star$ with $\ell_{\min}=(\widehat{m}/|\partial B_1(0)|)q$, for $q\in[0,1]$, evaluated for $q=i/5$, $i=0,\dots,5$, for $\widehat{m}\in\{2,3,4,5\}$ (left to right, top to bottom).}\label{rein:ex_solb_3d_val_1}
\end{center}
\end{figure}

\begin{figure}[h] 
\begin{center}
	 \includegraphics[width = 6cm]{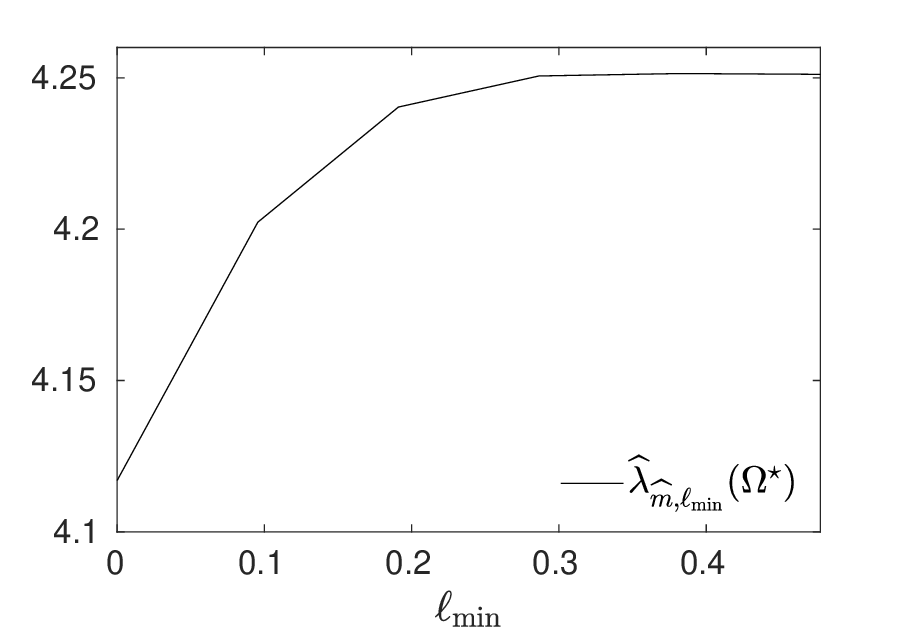}
     \includegraphics[width = 6cm]{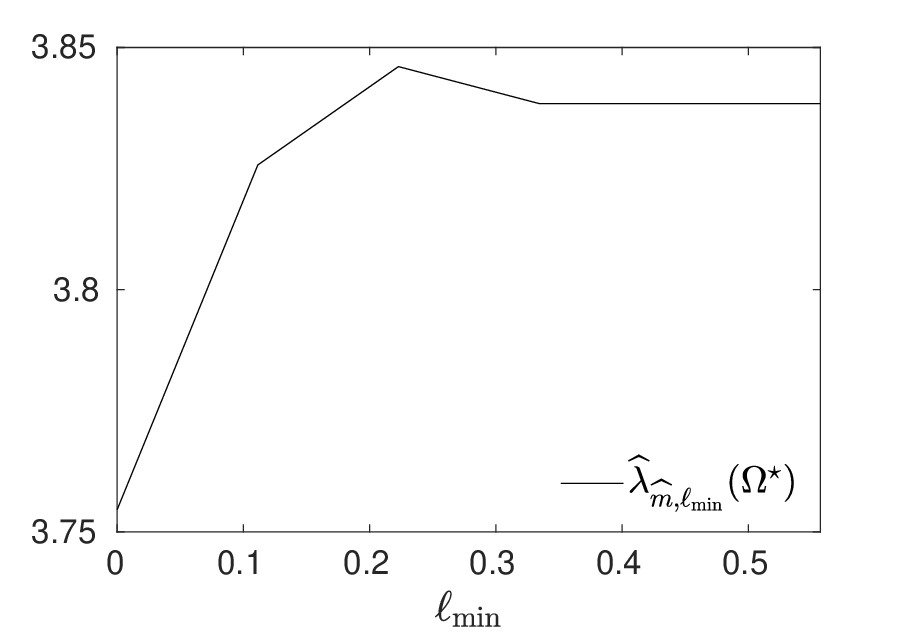}
     \includegraphics[width = 6cm]{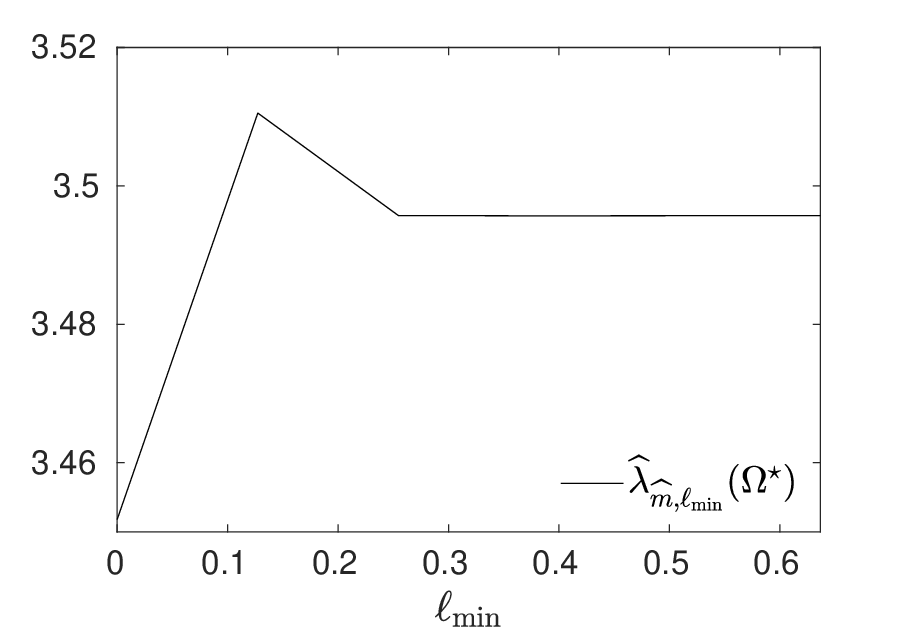}
     \includegraphics[width = 6cm]{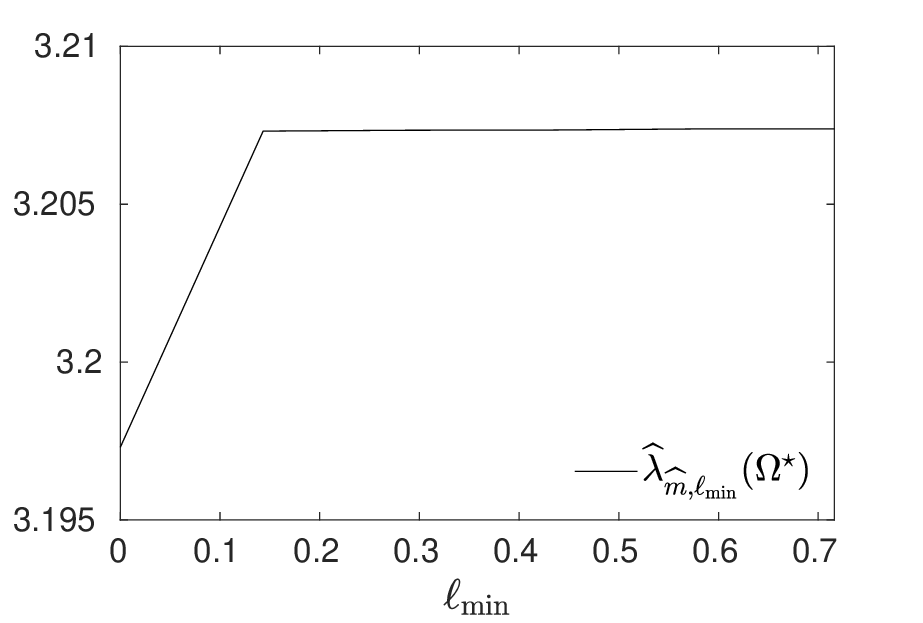}
  \caption{Eigenvalues $\widehat{\lambda}_{\widehat{m},\ell_{\min}}$ of the approximated optimal domains $\Omega^\star$ with $\ell_{\min}=(\widehat{m}/|\partial B_1(0)|)q$, for $q\in[0,1]$, evaluated for $q=i/5$, $i=0,\dots,5$, for $\widehat{m}\in\{6,\dots,9\}$ (left to right, top to bottom).} \label{rein:ex_solb_3d_val_2} \end{center}
\end{figure}

\begin{figure}[h] 
\begin{center}
\begin{minipage}{0.24\textwidth}
\begin{center}
	\includegraphics[ width = 0.98\textwidth]{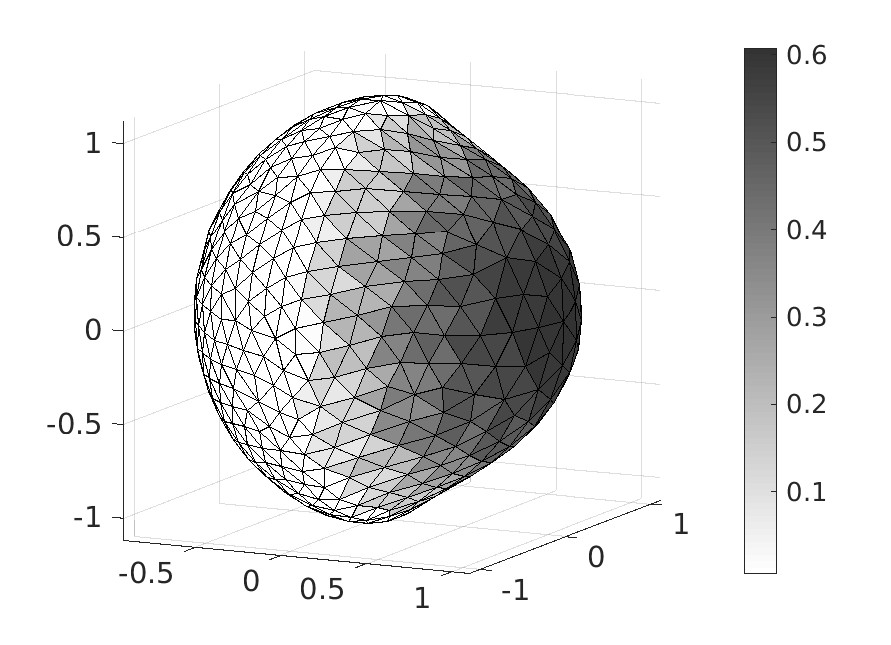}
     \includegraphics[width = 0.98\textwidth]{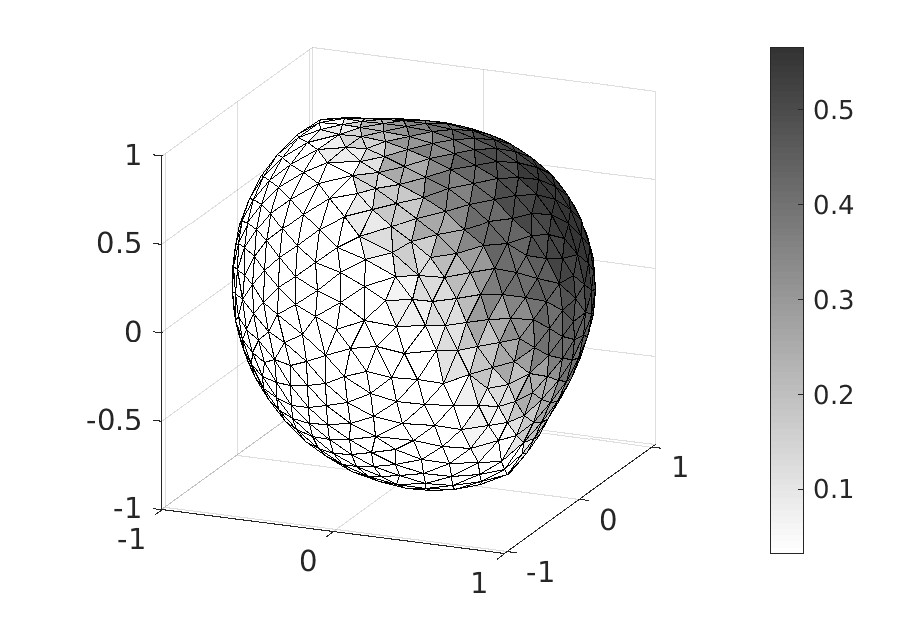}
     \includegraphics[width = 0.98\textwidth]{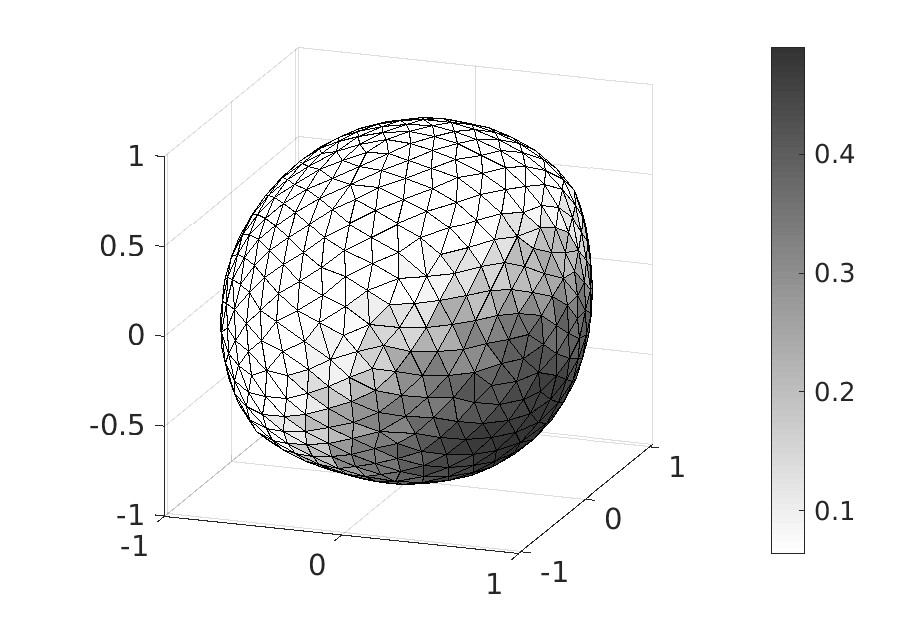}
     \includegraphics[width = 0.98\textwidth]{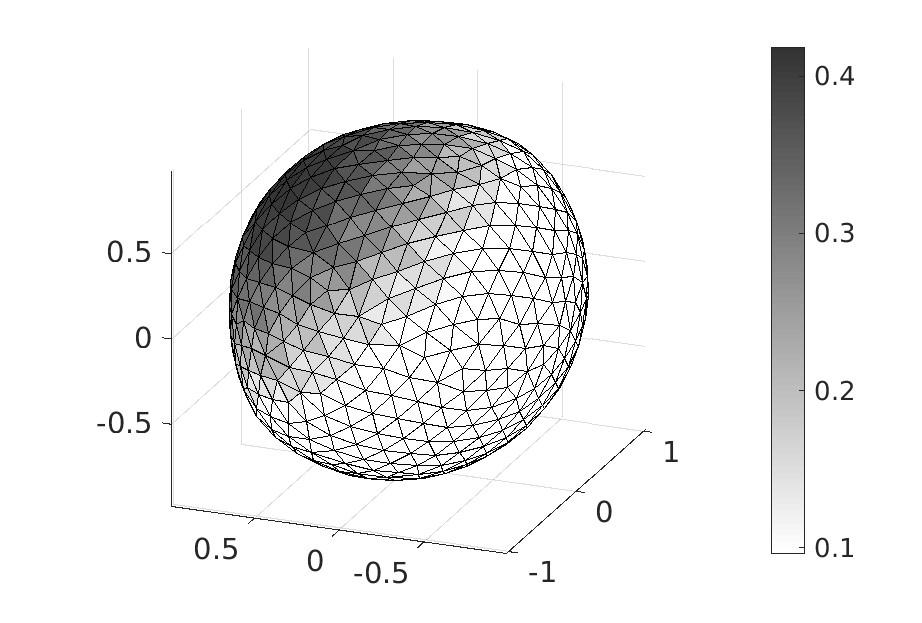}
     \includegraphics[width = 0.98\textwidth]{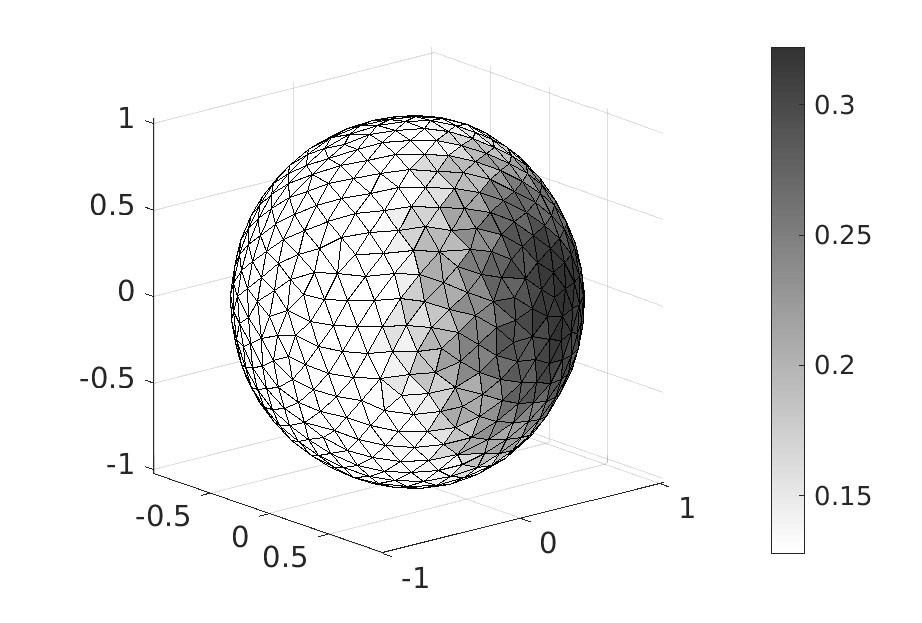}
\end{center}
\end{minipage}
\vline
\begin{minipage}{0.24\textwidth}
\begin{center}
\includegraphics[width = 0.98\textwidth]{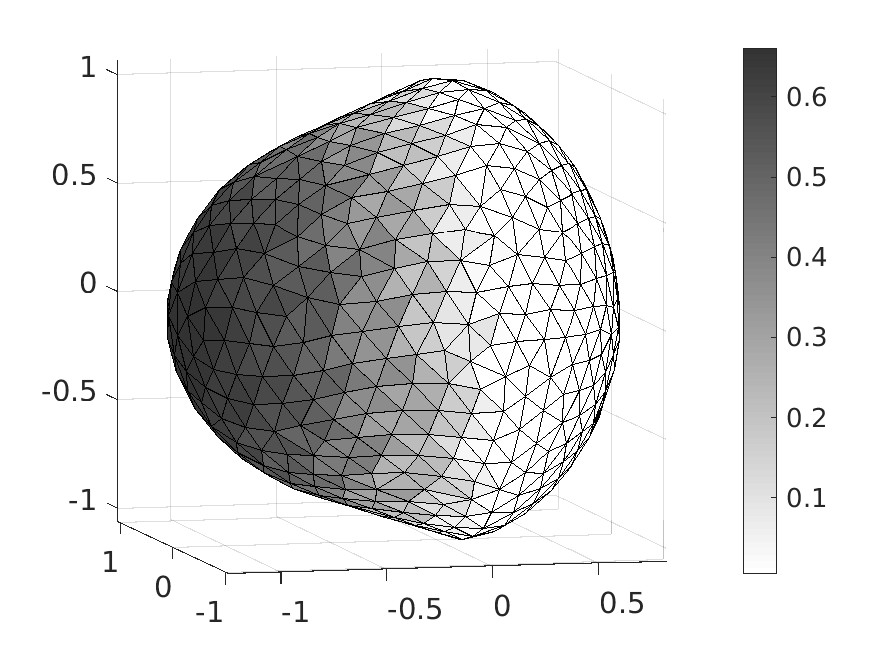}
     \includegraphics[width = 0.98\textwidth]{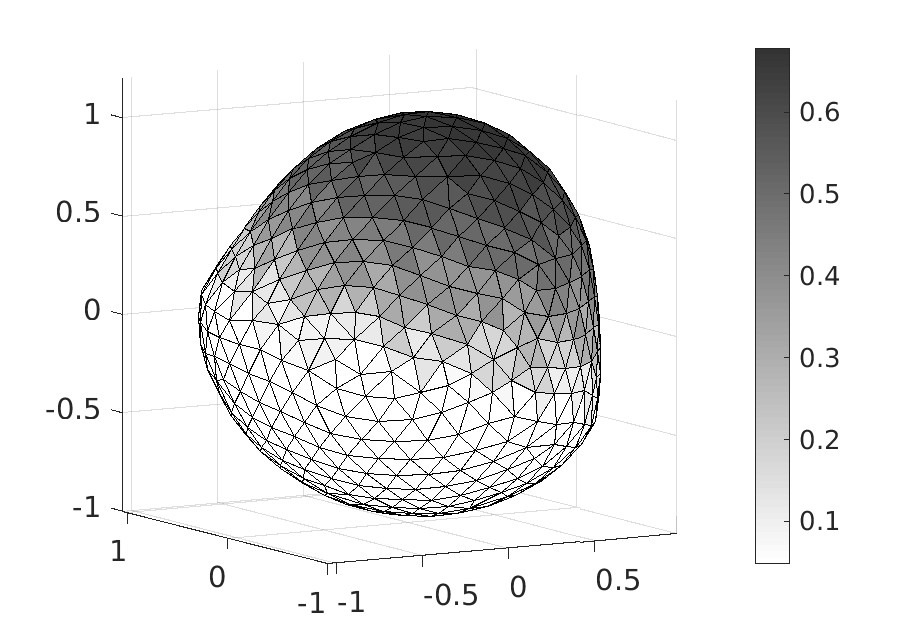}
     \includegraphics[width = 0.98\textwidth]{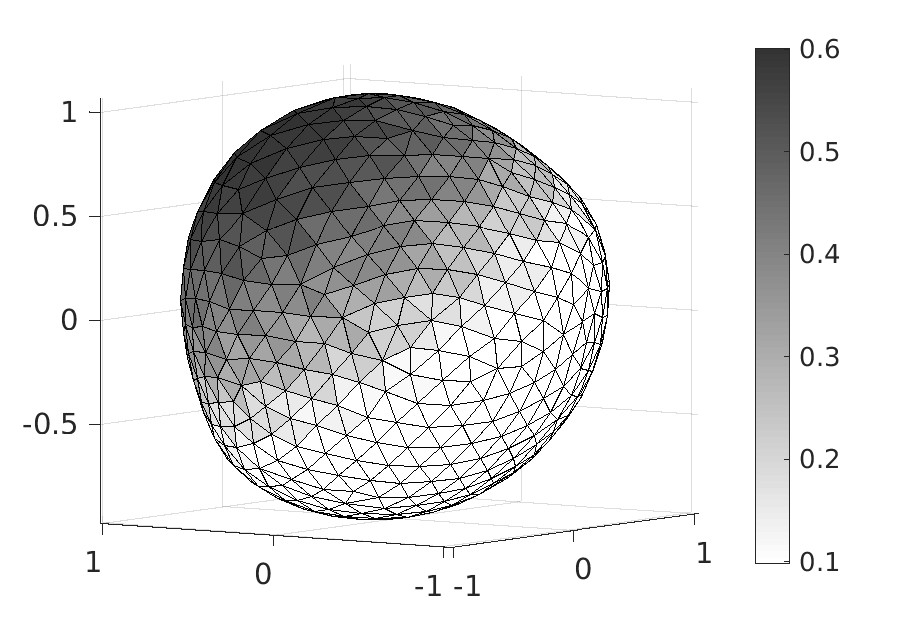}
     \includegraphics[width = 0.98\textwidth]{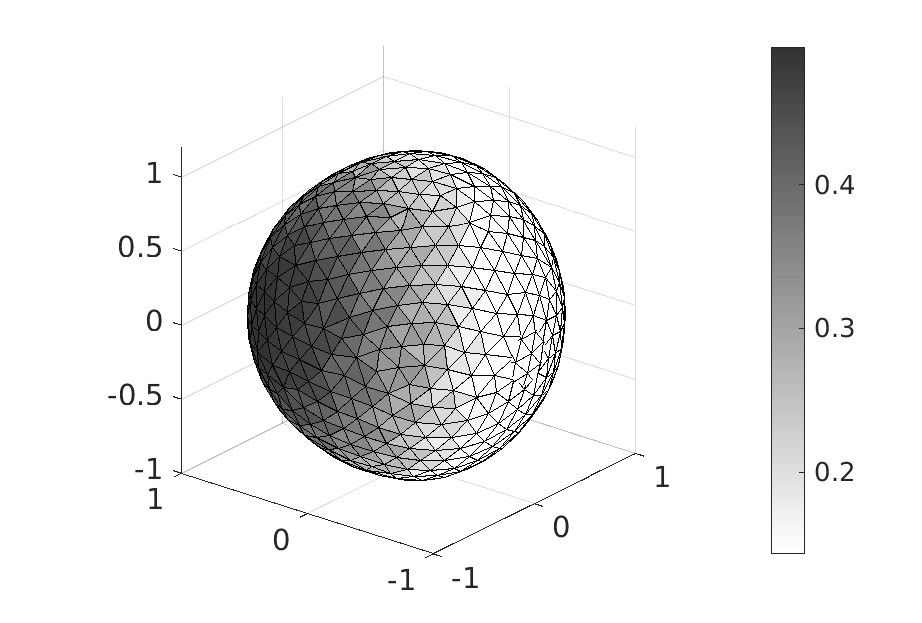}
     \includegraphics[width = 0.98\textwidth]{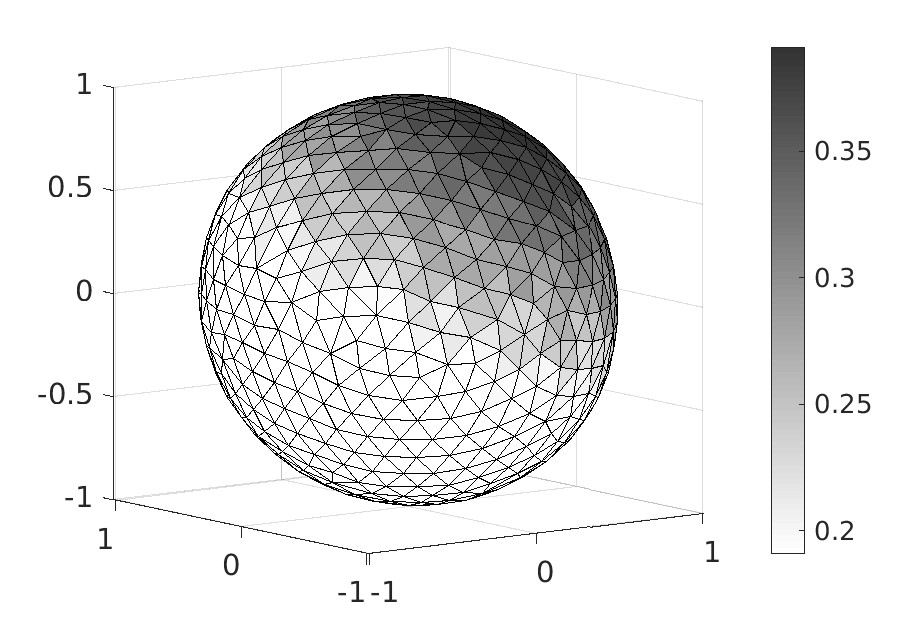}
\end{center}
\end{minipage}
\vline
\begin{minipage}{0.24\textwidth}
\begin{center}
\includegraphics[width = 0.98\textwidth]{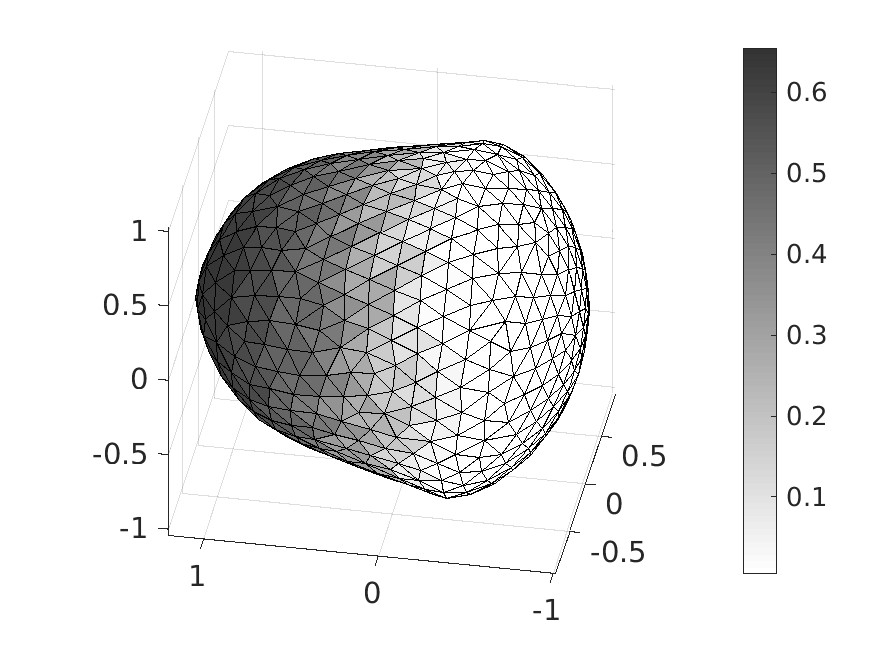}
	\includegraphics[width = 0.98\textwidth]{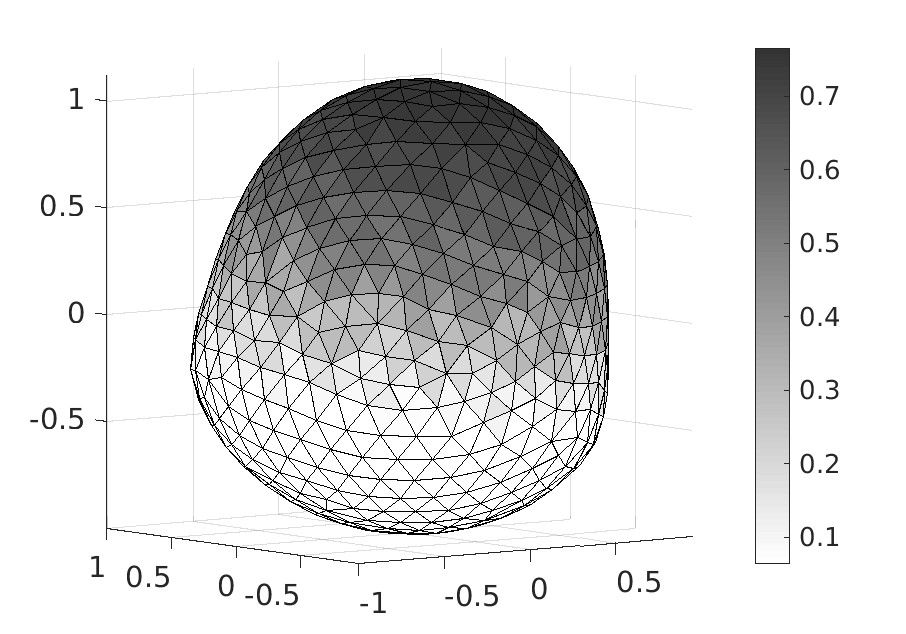}
    \includegraphics[width = 0.98\textwidth]{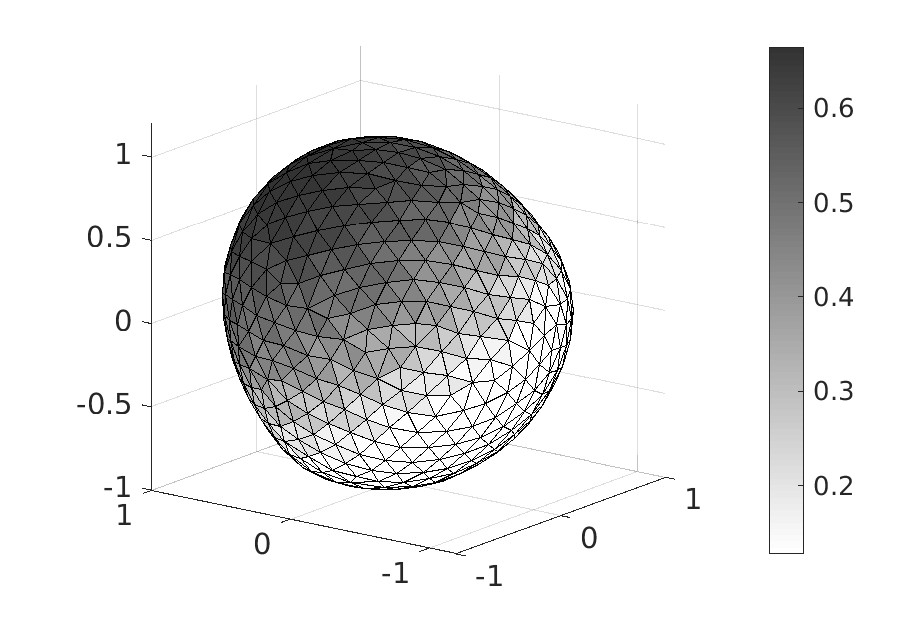}
    \includegraphics[width = 0.98\textwidth]{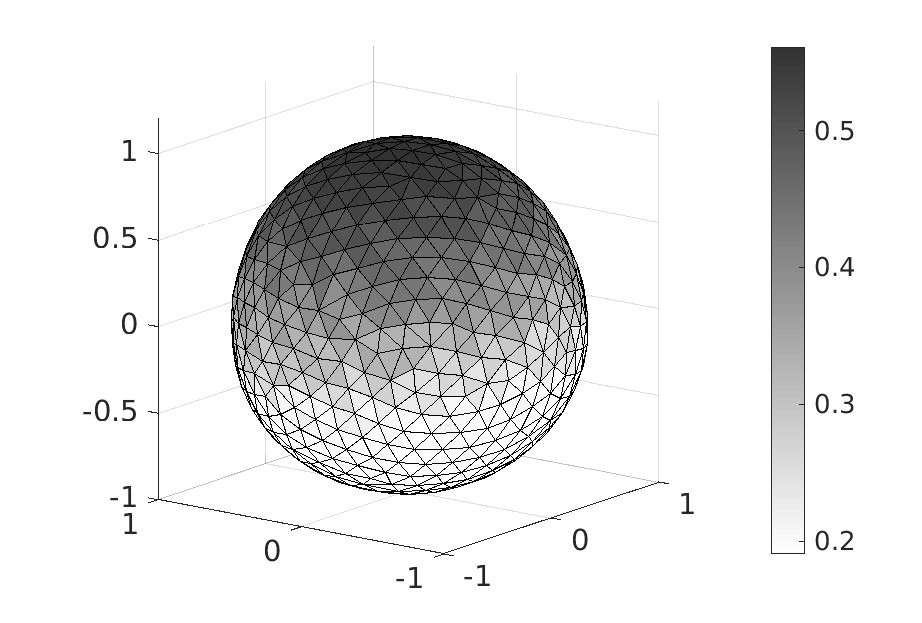}
    \includegraphics[width = 0.98\textwidth]{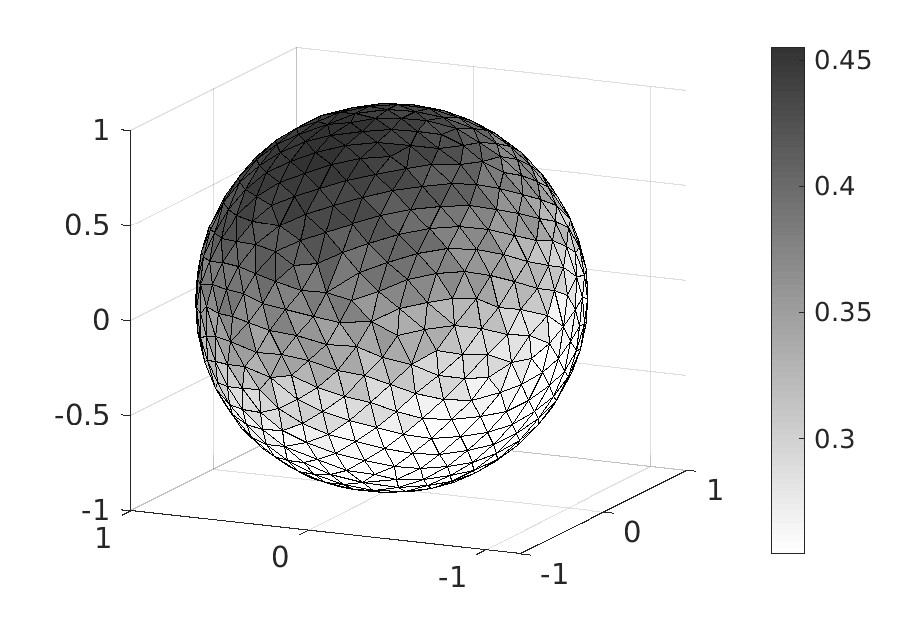}
\end{center}
\end{minipage}
\vline
\begin{minipage}{0.24\textwidth}
\begin{center}
\includegraphics[width = 0.98\textwidth]{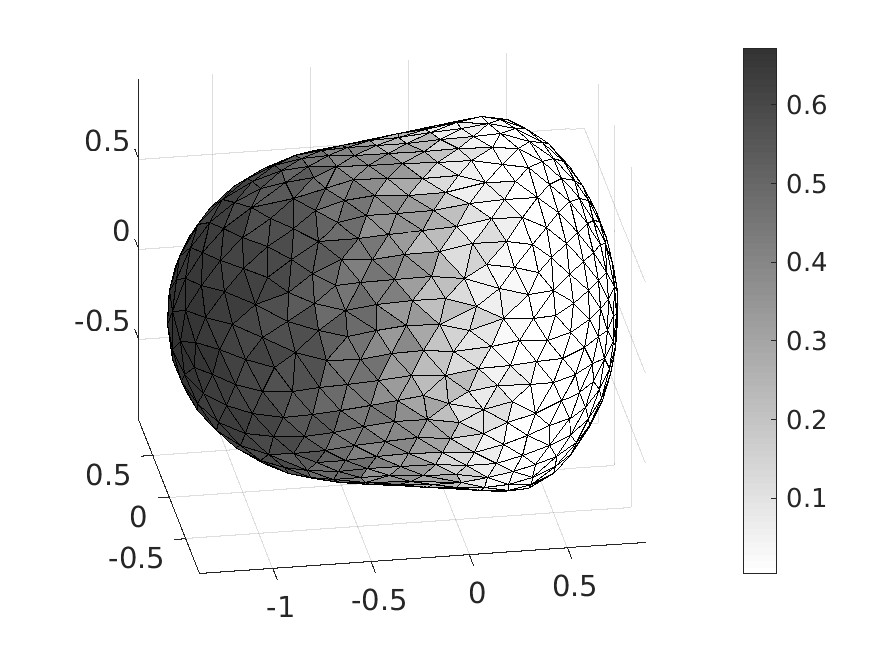}
	\includegraphics[width = 0.98\textwidth]{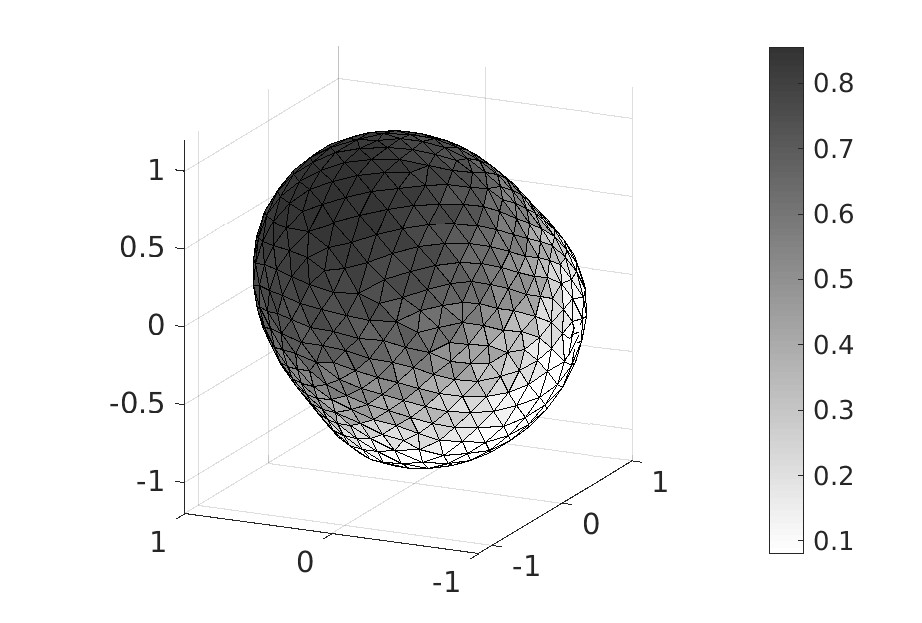}
    \includegraphics[width = 0.98\textwidth]{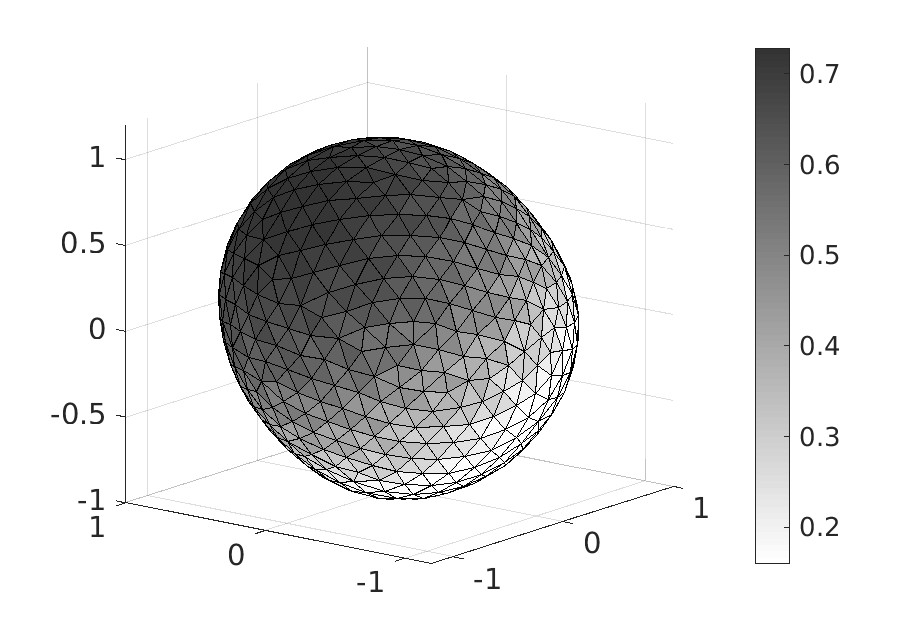}
    \includegraphics[width = 0.98\textwidth]{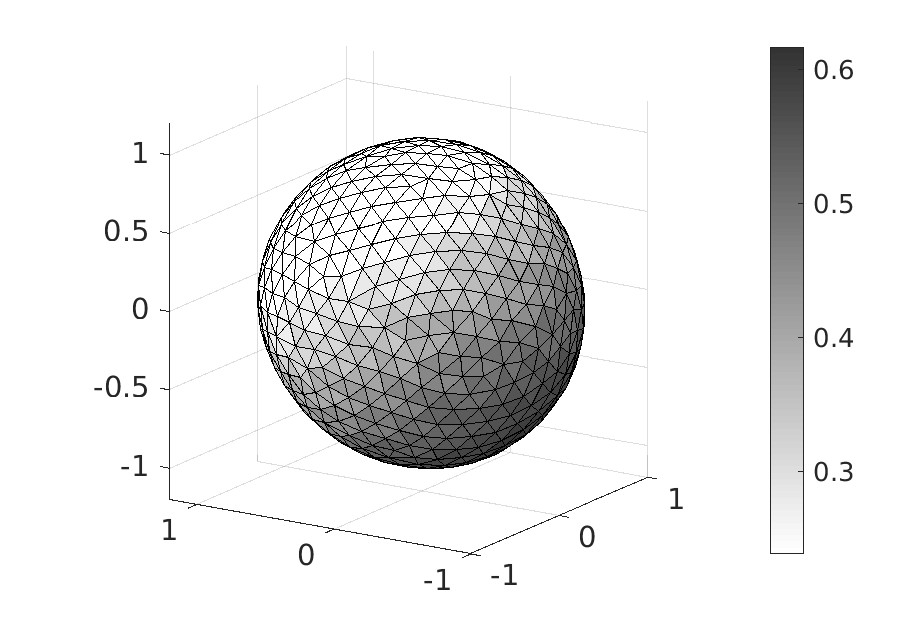}
    \includegraphics[width = 0.98\textwidth]{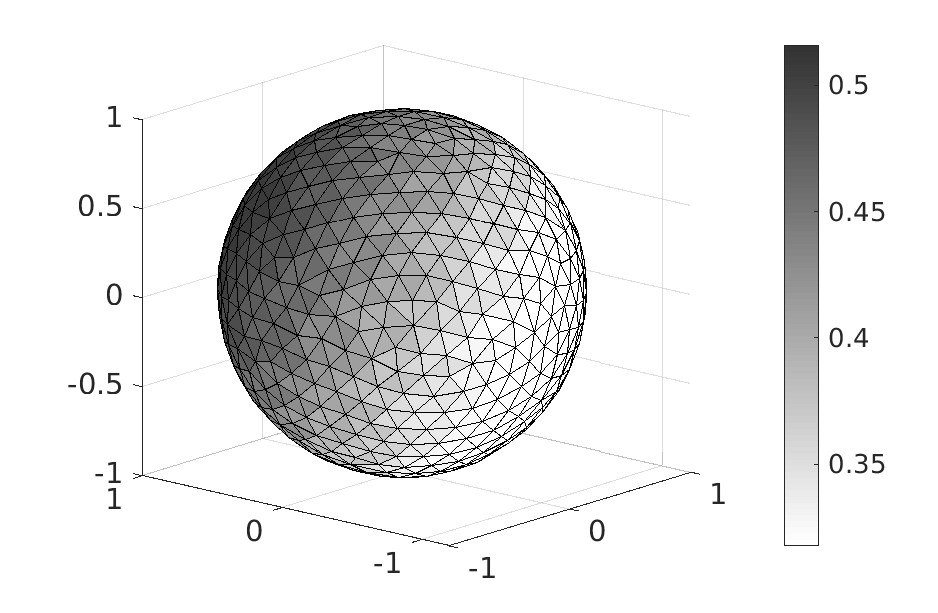}
\end{center}
\end{minipage} 
\caption{Approximated optimal domains for $\widehat{\lambda}_{\widehat{m},\ell_{\min}}$ for $\widehat{m}\in\{2,\dots,5\}$ (left to right) and $\ell_{\min} = (\widehat{m}/|\partial B_1(0)|)q$, for $q\in[0,1)$, evaluated for $q = i/5$, $i=0,\dots,4$ (top to bottom), with initial domain approximated on a mesh with maximal mesh size $h = 2^{-3}$. With increasing $\ell_{\min}$, the optimal domains transform from the asymmetric domain to the ball, and the observed kink in the boundary smoothens. The optimal domains appear to be rotationally symmetric, and symmetry breaking in the optimal insulation can be observed for all values of $\widehat{m}$ and $\ell_{\min} <  (\widehat{m}/|\partial B_1(0)|)$. For the value $ i = 5$ the optimal domains are balls with constant distributions and are not shown here.}\label{rein:ex:so_3dm1}
\end{center}
\end{figure}

\begin{figure}[h]
\begin{minipage}{0.3\textwidth}
\begin{center}
\includegraphics[trim = {1cm 0 1cm 0},width = 0.95\textwidth]{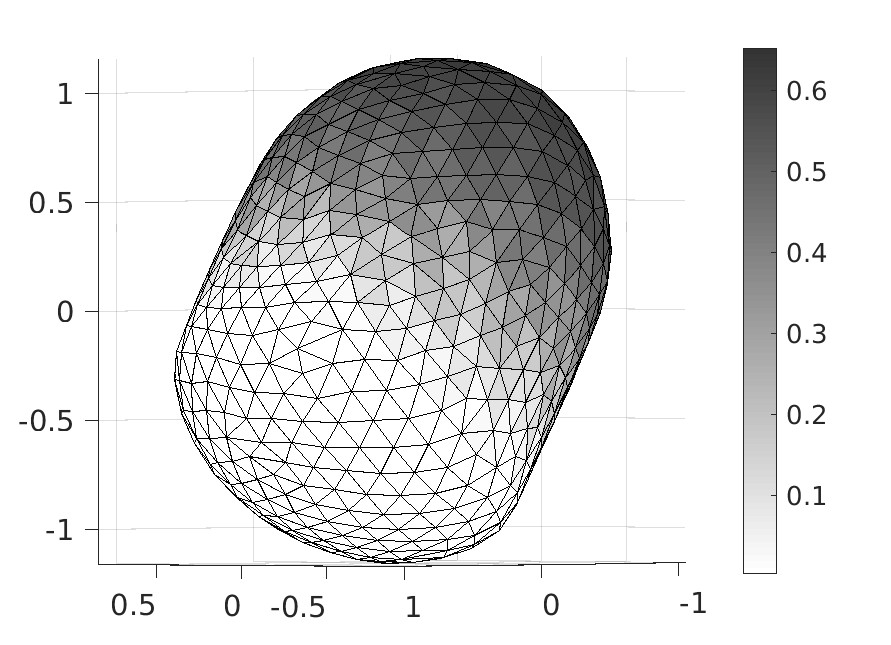}
\includegraphics[trim = {1cm 0 1cm 0},width = 0.95\textwidth]{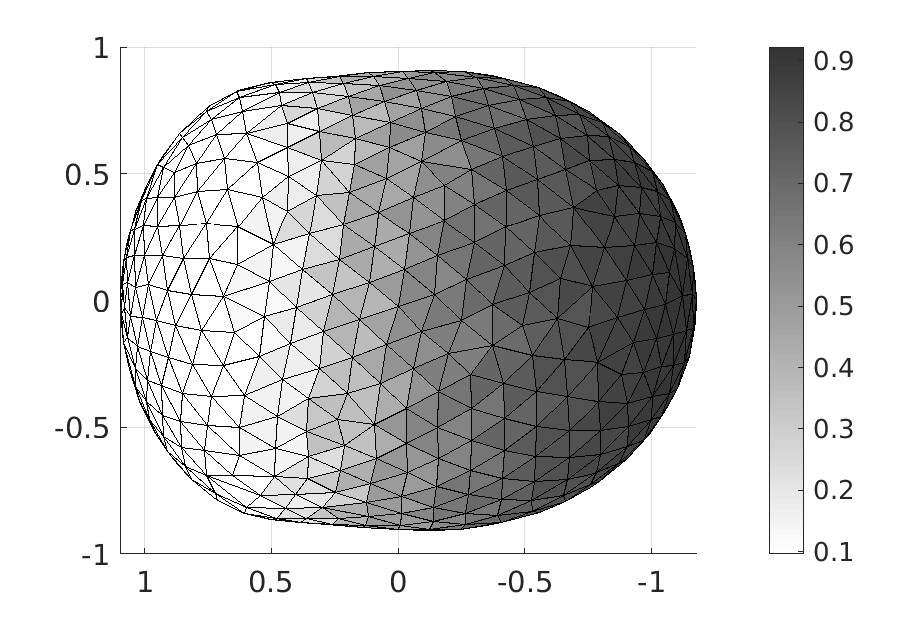}
\includegraphics[trim = {1cm 0 1cm 0},width = 0.95\textwidth]{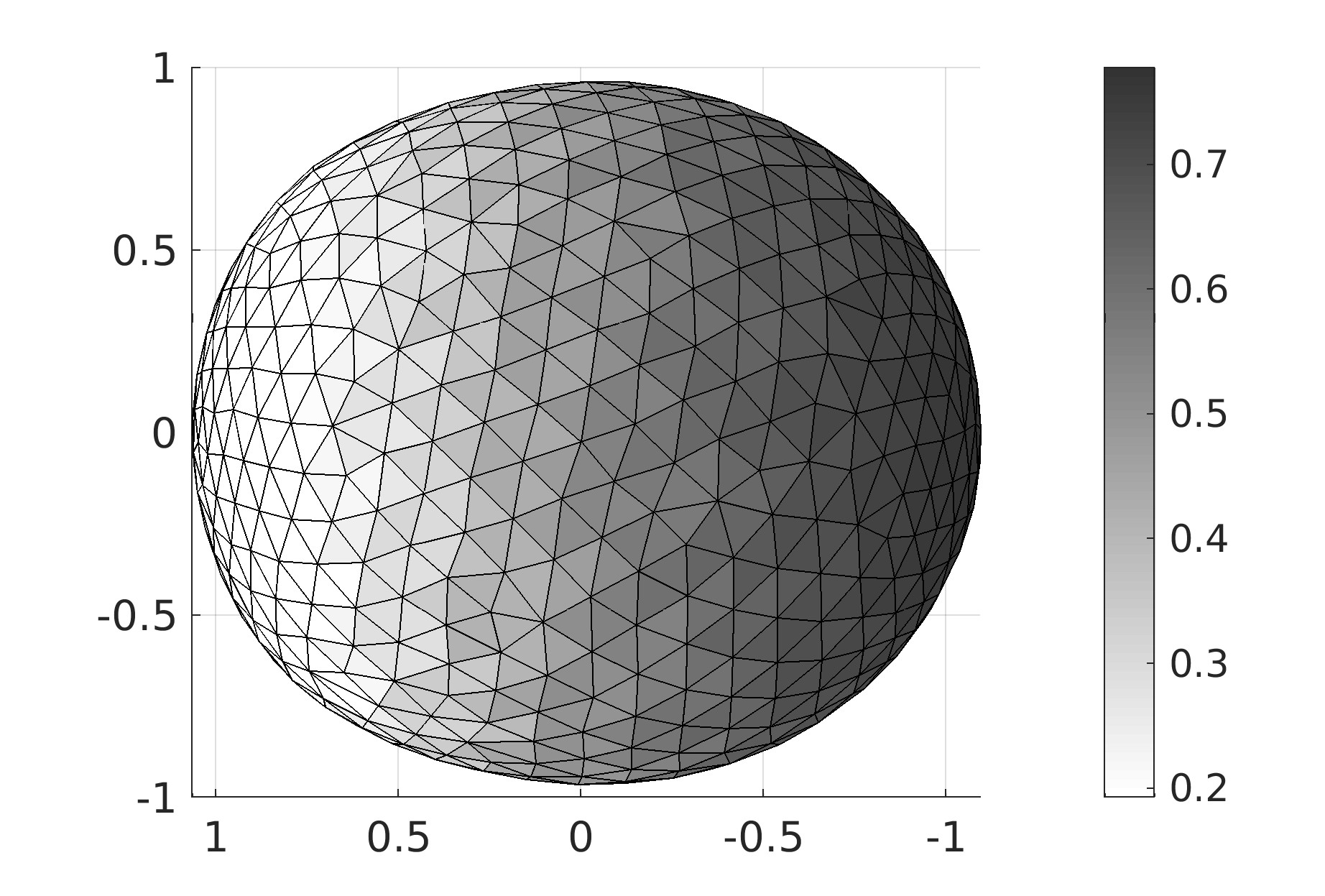}
\includegraphics[trim = {1cm 0 1cm 0},width = 0.95\textwidth]{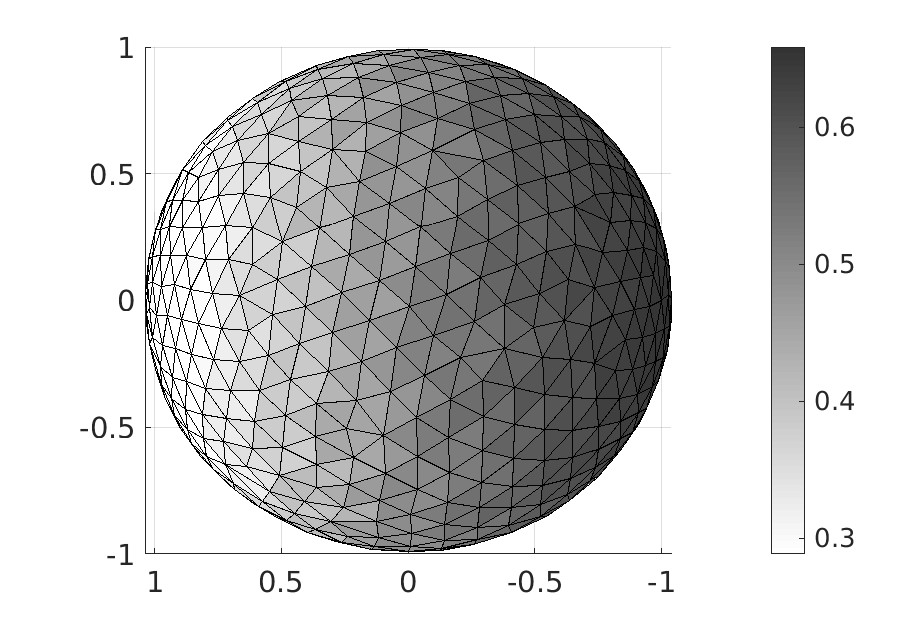}
\includegraphics[trim = {1cm 0 1cm 0},width = 0.95\textwidth]{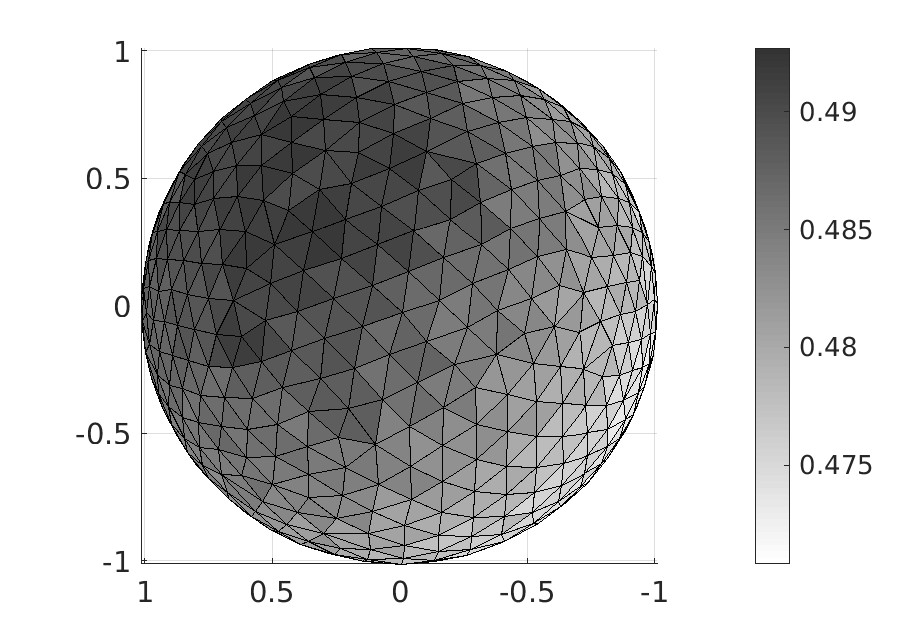}
\end{center}
\end{minipage}
\hfill\vline\hfill
\begin{minipage}{0.3\textwidth}
\begin{center}
\includegraphics[trim = {1cm 0 1cm 0},width = 0.95\textwidth]{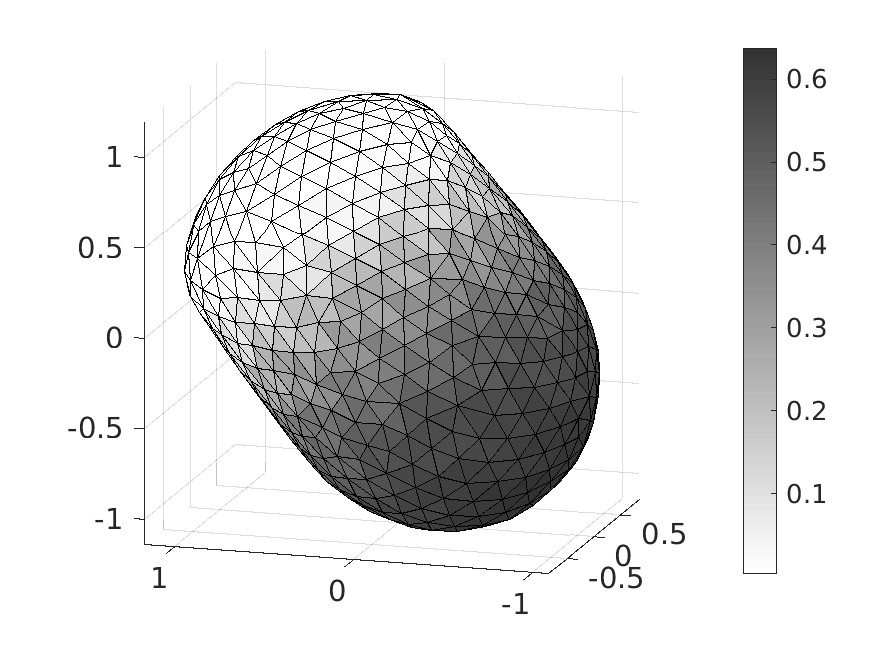}
\includegraphics[trim = {1cm 0 1cm 0},width = 0.95\textwidth]{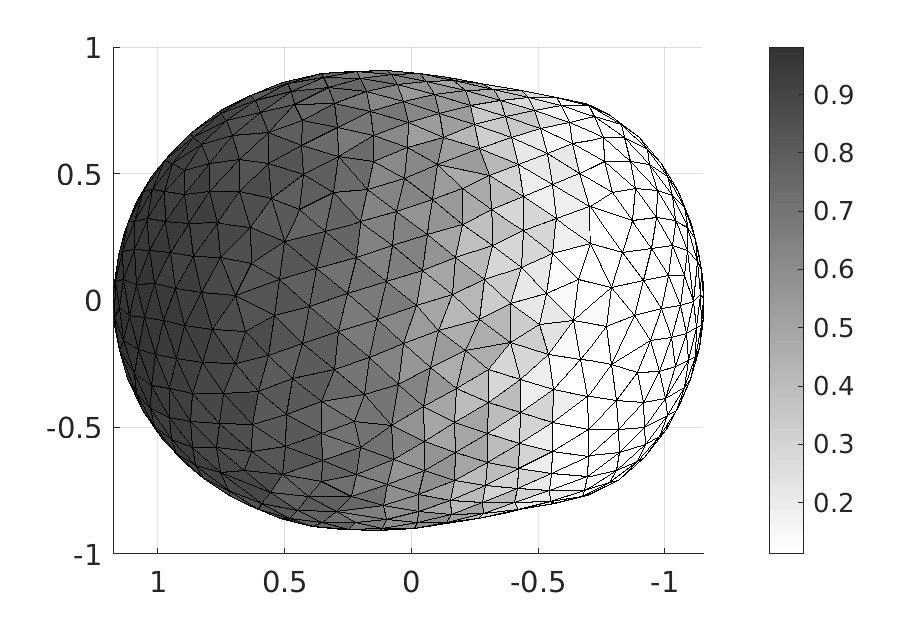}
\includegraphics[trim = {1cm 0 1cm 0},width = 0.95\textwidth]{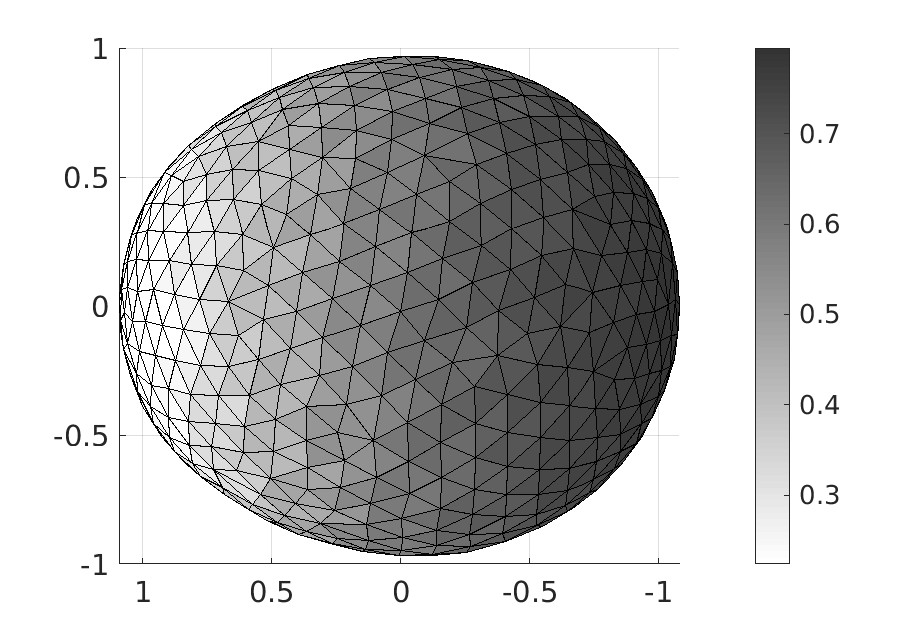}
\includegraphics[trim = {1cm 0 1cm 0},width = 0.95\textwidth]{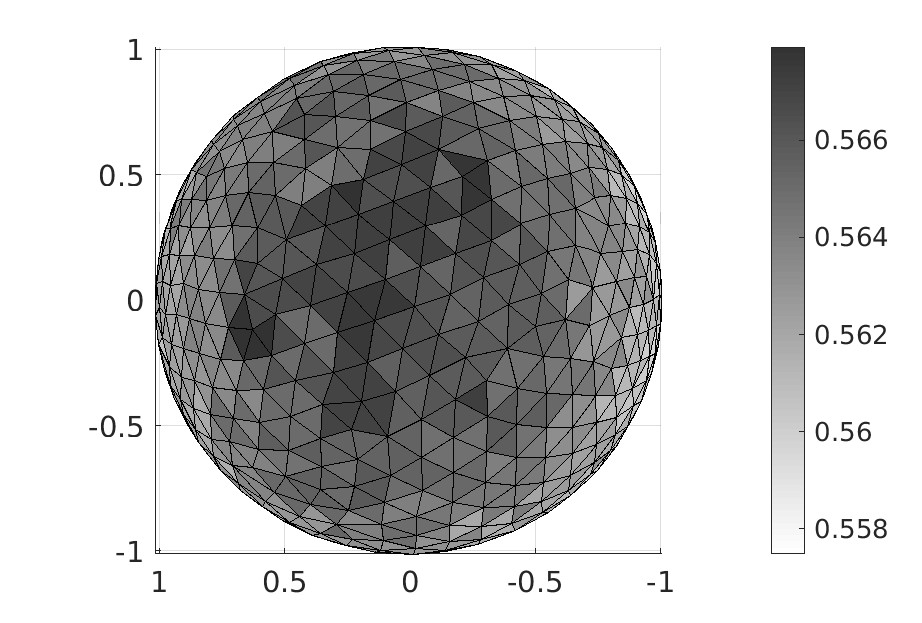}
\includegraphics[trim = {1cm 0 1cm 0},width =0.95\textwidth]{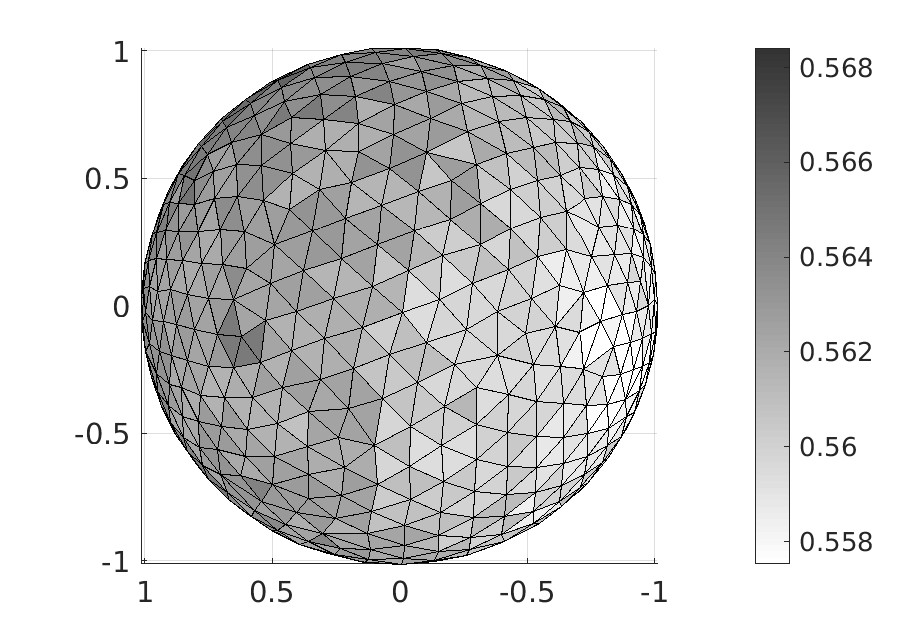}
\end{center}
\end{minipage}
\hfill\vline\hfill
\begin{minipage}{0.3\textwidth}
\begin{center}
\includegraphics[trim = {1cm 0 1cm 0},width = 0.95\textwidth]{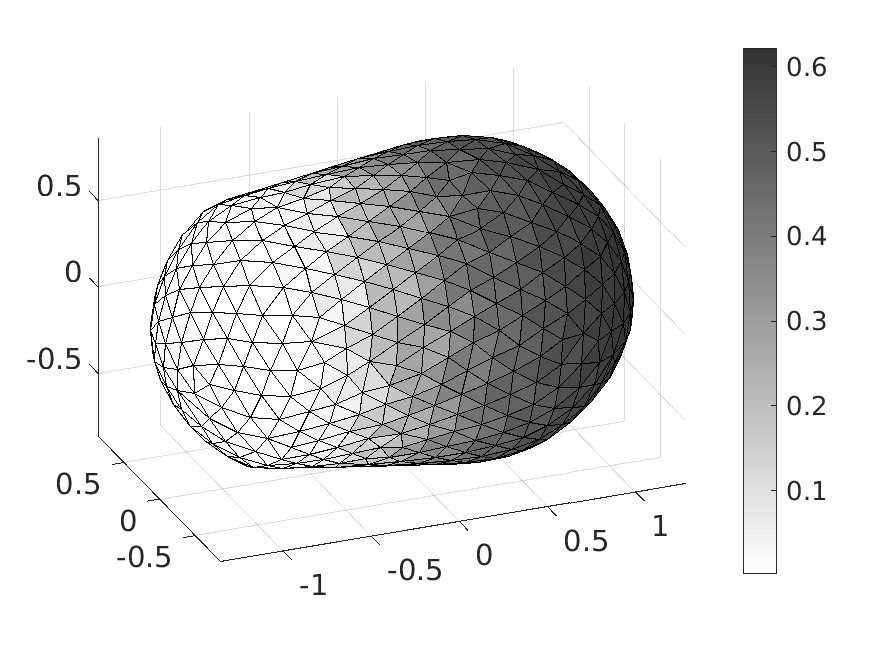}
\includegraphics[trim = {1cm 0 1cm 0},width = 0.95\textwidth]{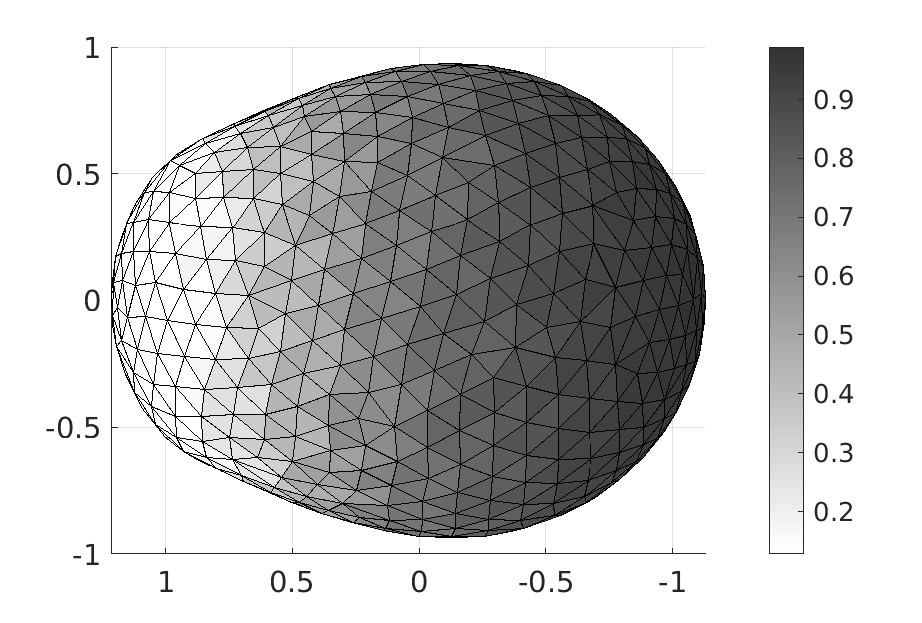}
\includegraphics[trim = {1cm 0 1cm 0},width = 0.95\textwidth]{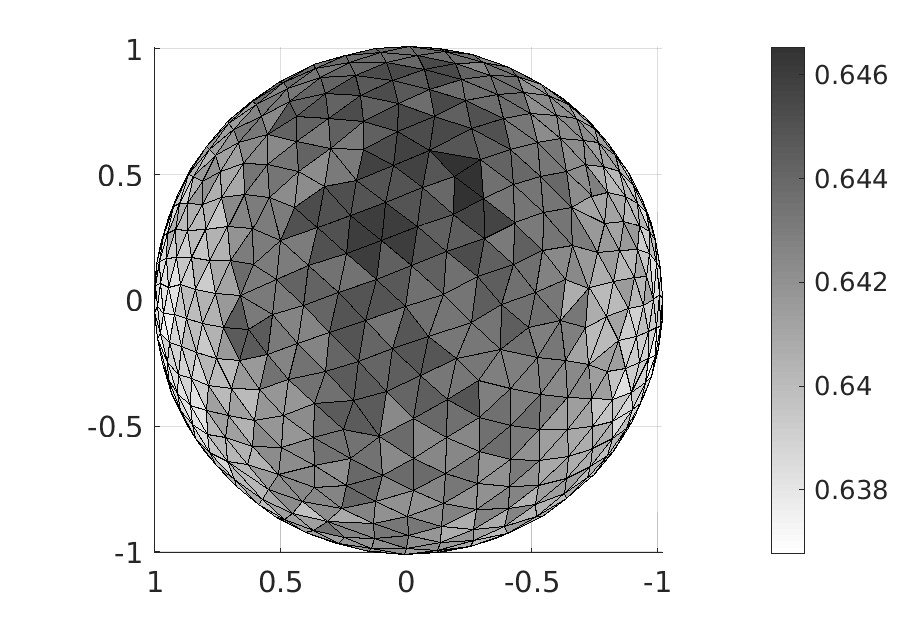}
\includegraphics[trim = {1cm 0 1cm 0},width =0.95\textwidth]{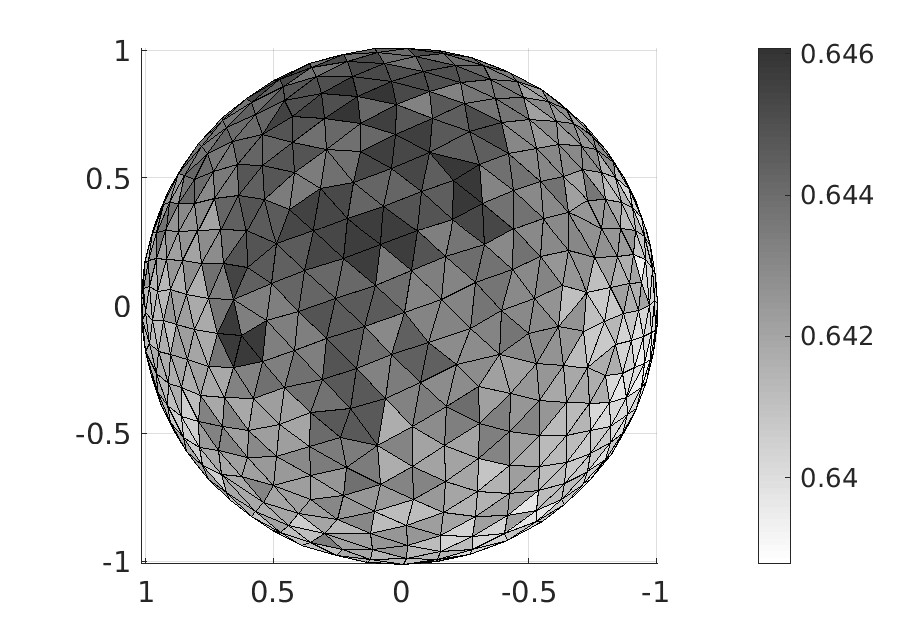}
 \includegraphics[trim = {1cm 0 1cm 0},width =0.95\textwidth]{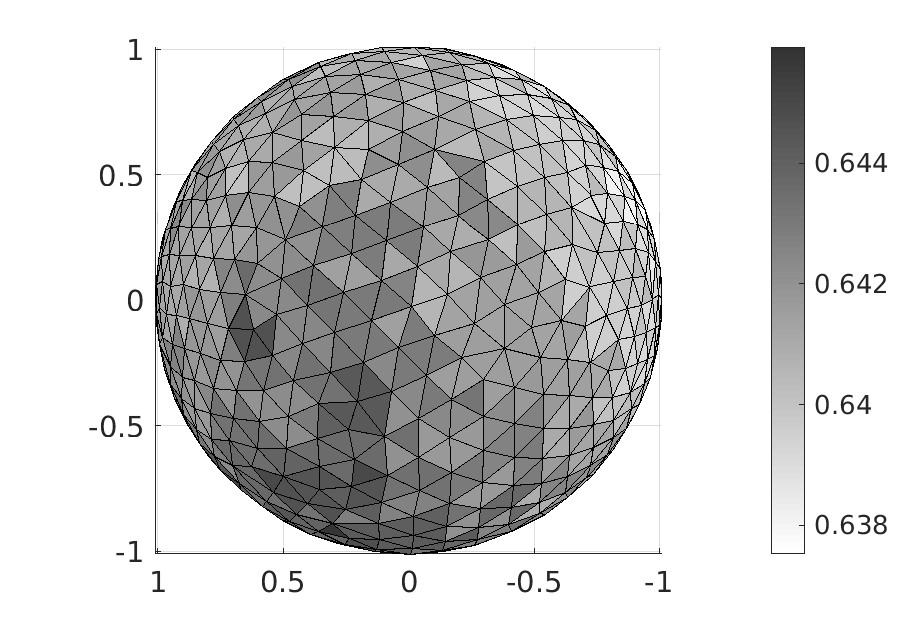}
\end{center}
\end{minipage}
\caption{Approximated optimal domains for $\widehat{\lambda}_{\widehat{m},\ell_{\min}}$ for $\widehat{m}\in\{6,7,8\}$ (left to right) and $\ell_{\min}=(\widehat{m}/|\partial B_1(0)|)q$ for $q\in[0,1)$, evaluated for $q=i/5$, $i=0,\dots,4$ (top to bottom), with initial domain approximated on a mesh with maximal mesh size $h=2^{-3}$. With increasing $\ell_{\min}$, the optimal domains transform from the asymmetric domain to the ball, and the observed kink in the boundary smoothens. The optimal domains appear to be rotationally symmetric. Symmetry breaking in the optimal domains and optimal insulation can be observed for smaller values of $\widehat{m}$ and $\ell_{\min}$. For the value $i = 5$ the optimal domains are balls with constant distribution and are not shown here.}\label{rein:ex:so_3dm2}
\end{figure}

\color{black}

\clearpage
\small

\clearpage

\printbibliography[title = References]

S\"oren Bartels:\quad{\tt bartels@mathematik.uni-freiburg.de}\\
Giuseppe Buttazzo:\quad{\tt giuseppe.buttazzo@unipi.it}\\
Hedwig Keller:\quad{\tt hedwig.keller@mathematik.uni-freiburg.de}

\end{document}